%% file: 2026-04-11-Local-LMO-ALL.tex
\documentclass{article}

\usepackage[preprint]{neurips_2026}

\usepackage[utf8]{inputenc} 
\usepackage[T1]{fontenc}    
\usepackage{hyperref}       
\usepackage{url}            
\usepackage{booktabs}       
\usepackage{amsfonts}       
\usepackage{nicefrac}       
\usepackage{microtype}      
\usepackage{xcolor}         

\usepackage[hyperpageref]{backref}
\hypersetup{ 
    colorlinks=true,
    citecolor=darkscarlet,
    linkcolor=darkpowderblue,
    filecolor=magenta,      
    urlcolor=yaleblue,
    menucolor=gray
}
\renewcommand*{\backref}[1]{}
\renewcommand*{\backrefalt}[4]{%
   \ifcase #1 %
     \footnotesize{(Not cited.)}%
   \or
     \footnotesize{(Cited on page~#2)}%
   \else
     \footnotesize{(Cited on page~#2)}%
\fi }

\include{macros-common}

\include{macros-theorems-article}
\newcommand{\rbrac}[1]{\left( #1 \right)}
\newcommand{\sbrac}[1]{\left[ #1 \right]}

\allowdisplaybreaks

\title{\bf Local LMO: Constrained Gradient Optimization via \\a Local Linear Minimization Oracle}

\author{
Peter Richt\'{a}rik \\ KAUST\thanks{King Abdullah University of Science and Technology, Thuwal, Saudi Arabia}\\
\And
Kaja Gruntkowska \\ KAUST$^*$
\And Hanmin Li \\ KAUST$^*$
}

\begin{document}

\maketitle
 
\begin{abstract}
    We design \algname{Local LMO} -- a new projection-free gradient-type method for constrained optimization. The key algorithmic idea  is to replace the {\em global} linear minimization oracle over the constraint set used by Frank--Wolfe (\algname{FW}) with a {\em local} linear minimization oracle over the intersection of the constraint set and a ``small'' ball centered at the current iterate. 
    In particular, when minimizing $f:\R^d\to \R$ over a constraint $\emptyset\neq \cX\subseteq \R^d$, \algname{Local LMO}  performs the iteration \[x_{k+1}\in \argmin_{z\in \cX\cap \cB(x_{k},t_k)} \langle \nabla f(x_{k}), z \rangle,\]
    where $x_0\in \cX$, and $t_k>0$ is a suitably chosen radius which can be interpreted as an effective stepsize.   While designed as an alternative to \algname{FW}, \algname{Local LMO} is perhaps best viewed as a generalization of Gradient Descent (\algname{GD}) rather than a modification of \algname{FW}. Indeed,  it is easy to see that \algname{Local LMO}  reduces to \algname{GD} in the unconstrained setting and, more generally, to \algname{GD} restricted to an affine subspace if the constraint $\cX$ is affine. 
    We prove that this simple algorithmic scheme transfers the known (unaccelerated) convergence rates of Projected Gradient Descent (\algname{PGD}) to the projection-free world in several important regimes, some of which are beyond the reach of \algname{FW}.   
    In contrast to \algname{FW} theory, i) our guarantees hold without requiring the feasible set $\cX$ to be bounded (and unlike \algname{FW}, our rates therefore do not depend on its diameter),  ii) our theory does not require the ``curvature'' assumption, which allows us to establish a standard sublinear rate for convex functions with bounded gradients (\algname{FW} is not known to converge in this regime), iii) we obtain a linear rate in the smooth strongly convex regime (\algname{FW} obtains a linear rate under special assumptions on the geometry of  $\cX$ only). Furthermore, we obtain sharp sublinear rates in the smooth convex and non-convex regimes,  in the $(L_0,L_1)$-smooth convex regime, and in stochastic and non-differentiable settings. 
\end{abstract}

\section{Introduction}

Constrained optimization problems play a fundamental role in optimization theory and machine learning (ML) \citep{boyd2004convex}. In this work, we revisit the classical formulation
\begin{equation}\label{eq:main}
    \min_{x\in \cX} f(x),
\end{equation}
where the set $\cX\subseteq \R^d$ represents constraints, and $f:\R^d \to \R$ represents a loss/cost/objective function. Hence, we seek to minimize the loss $f$ subject to the constraint $\cX$. We denote the set of all
minimizers by $\cX_\star$.
Such problems arise throughout ML \citep{pokutta2020deep, sangalli2021constrained}, statistics \citep{hathaway1985constrained, hall1999density, behr2013portfolio}, and signal processing \citep{mattingley2010real, zhong2026stability}, where constraints are used to encode prior structure, enforce regularization, or impose physical feasibility.
Constrained formulations play an increasingly important role in modern ML systems. As learning algorithms are deployed in safety-critical domains---including finance, healthcare, and autonomous systems---there is a growing demand for models that are safe, robust, interpretable, and fair \citep{donini2018empirical, yang2022safety}. Constrained optimization offers a framework for encoding such requirements directly into the training objective \citep{cotter2019optimization, dai2023safe}.
Naturally, the extremely rich optimization literature offers a multitude of algorithms for handling such problems. A useful way of mapping out the landscape of these methods is to group them by the oracle used to access the constraint set $\cX$. Most approaches fall into one of these three categories\footnote{These three oracle models are representation-free: they do not assume an explicit parametrization of $\cX$, but only access it through queries revealing geometric information about the set. When additional structure of $\cX$ is available, it can be exploited to design more specialized algorithms--see \Cref{sec:interior_pt}, \Cref{sec:splitting_rev}, \Cref{sec:penalty_rev} and \citet{boyd2004convex}.}:

{\bf (i) Projection oracle.} Methods in this class assume access to a projection operator of the form $\Proj_{\cX}^{\phi}(x) \in \arg\min_{z \in \cX} D_{\phi}(z, x)$, where $D_{\phi}(z,x) \eqdef \phi(z) - \phi(x) - \inp{\nabla \phi(x)}{z - x}$ is the \emph{Bregman divergence} induced by a strictly convex, differentiable function $\phi$.
This includes Projected Gradient Descent (\algname{PGD}) \citep{bertsekas1999nonlinear} when $\phi(x)=\tfrac{1}{2}\|x\|^2$, mirror descent \citep{Nemirovsky1983problem} (and its special cases such as exponentiated gradient methods \citep{Kivinen1997Exponentiated}), and dual averaging \citep{nesterov2009primal}.
    
{\bf (ii) Linear minimization oracle (LMO).}
These methods, which include the Frank--Wolfe method \citep{FrankWolfe1956, levitin1966constrained} and its variants, access the constraint set only through linear optimization queries of the form
\begin{align}\label{eq:lmo}
    \squeeze \lmo{\cX}{g} \eqdef \argmin\limits_{z\in \cX}\inp{g}{z}.
\end{align}
 Such oracles are attractive in settings where projections may be expensive or intractable.

{\bf (iii) Separation oracle.} Such oracles provide, for a given query point~$x$, either a certificate that $x \in \cX$ or a hyperplane separating $x$ and $\cX$. Classical methods based on this oracle include the ellipsoid method \citep{Grotschel1993Geometric}, cutting-plane schemes \citep{Gilmore1961Linear}, and bundle-type epigraph algorithms \citep{Lemarechal1995Variants}. Membership oracles \citep{Grotschel1993Geometric} are sometimes considered in this context as a weaker primitive, but are typically converted into separation access for algorithmic purposes.

\subsection{Algorithm}

\begin{figure}[t]
    \centering
    \includegraphics[width=0.35\textwidth]{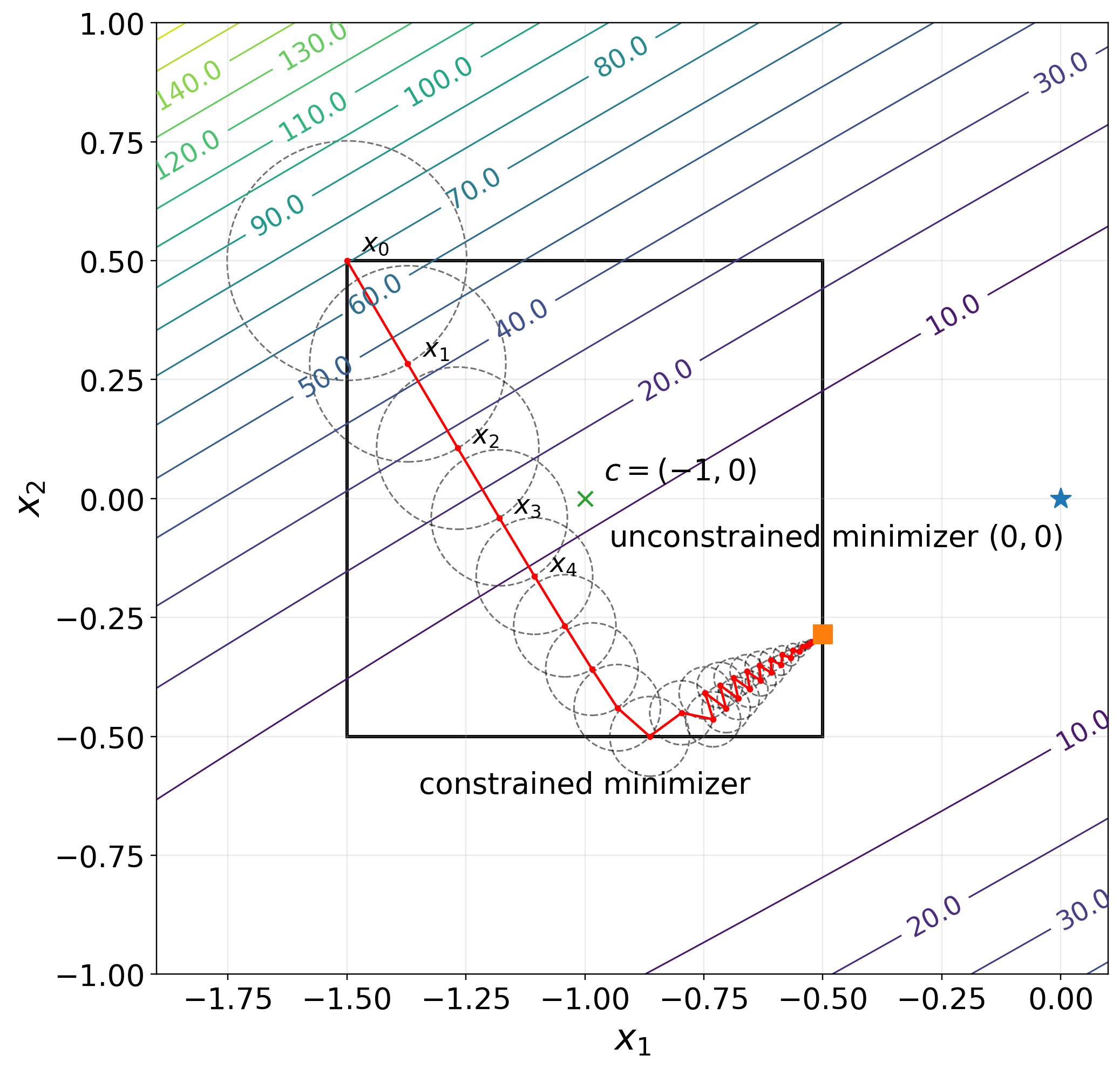}
        \includegraphics[width=0.35\textwidth]{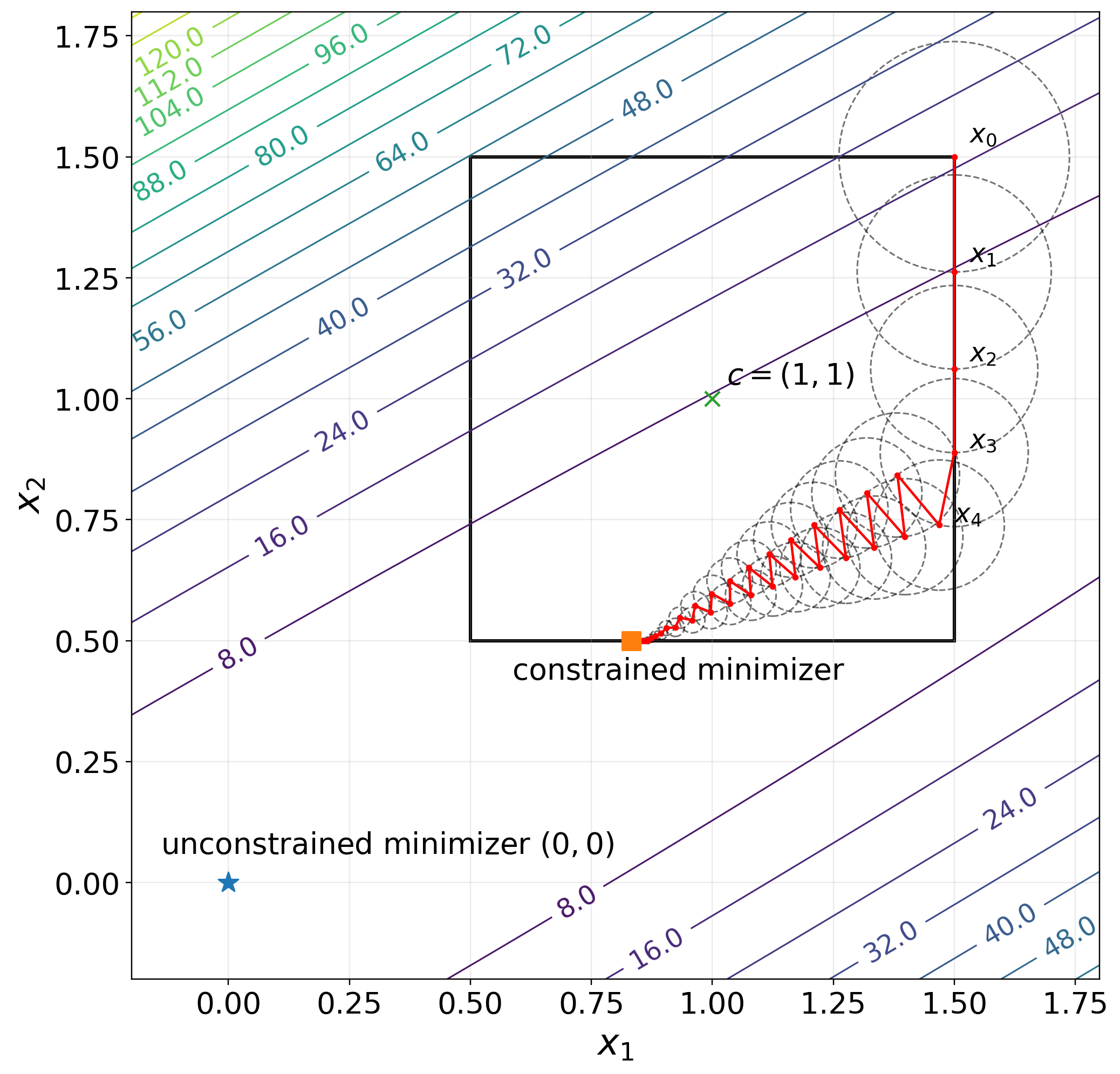}
    \caption{\small
    Illustration of \algname{Local LMO} dynamics for an $L$--smooth ($L=100$) and $\mu$--strongly convex ($\mu=1$) quadratic  $f:\R^2\to \R$, and a box constraint of radius $0.5$ centered at $c=(-1,0)$ (left) and $c=(1,1)$ (right), with the  radius rule $t_k=\theta\|x_k-x_\star\|$, where $\theta=2\sqrt{\mu L}/(L+\mu)\approx 0.19802$ (see \Cref{thm:smooth-strong}). Shown: 100 iterates $\{x_k\}$ of \algname{Local LMO}  and the corresponding  balls $\cB(x_k,t_k)$. Note that $\|x_{k+1}-x_k\|=t_k$ for all $k$ (\Cref{thm:descent}(ii) says that this is always the case).}
    \label{fig:local-lmo-2d-intro}
\end{figure}

We propose a new, conceptually simple, projection-free method for solving \eqref{eq:main}, defined via
\begin{equation}\label{eq:method}
\boxed{x_{k+1}\in \lmo{\cX\cap \cB(x_{k},t_k)}{\nabla f(x_{k})}
}
\end{equation}
where $\cB(x,t) \eqdef \{y \in \R^d \;:\; \norm{y-x}\leq t\}$
and $\{t_k\}_{k\geq 0}$ is an appropriately chosen sequence of positive radii.  The method is formalized in \Cref{alg:new}; see \Cref{fig:local-lmo-2d-intro} for a simple illustration in 2 dimensions how it works (see \Cref{sec:num_exp} for details of the experimental setup; also see \Cref{sec:exp-3-geom}). 

\subsection{Related work}\label{sec:related_work}

Here we give an overview of closely related literature, with a broader discussion in \Cref{sec:lit_rev}.

Our method is a \emph{local projection-free first-order method}. It is naturally positioned between Projected Gradient Descent (\algname{PGD}) \citep{rosen1960gradient} and Frank--Wolfe (\algname{FW}) \citep{FrankWolfe1956, levitin1966constrained}.
\algname{PGD} performs the update
\[
x_{k+1}=\Proj_{\cX}\bigl(x_{k}-\gamma_k \nabla f(x_{k})\bigr),
\]
where $\Proj_{\cX}(x)\eqdef\arg\min_{z\in\cX}\|x-z\|$, and thus relies on a \emph{projection oracle}. Under standard assumptions, it matches the optimal convergence rates of (unaccelerated) unconstrained Gradient Descent (\algname{GD}), without dependence on global geometric properties of $\cX$, such as its diameter $\diam \cX \eqdef \sup_{x,y\in\cX} \norm{x-y}$, and naturally applies to unbounded constraints. Its main drawback, however, is that projections can be computationally expensive or even intractable for many constraints.

By contrast, classical Frank--Wolfe computes
\begin{align*}
    s_k\in \lmo{\cX}{\nabla f(x_{k})}
\end{align*}
and then updates $x_k$ by moving toward $s_k$, thus relying on a \emph{global linear minimization oracle} (see \Cref{sec:fw_rev}, the original paper of \citep{FrankWolfe1956} and the modern overview by \citep{Jaggi2013}). Thus, \algname{FW} avoids projections altogether.
Classical \algname{FW} is typically analyzed for problems of the form~\eqref{eq:main} under the assumptions that $\cX$ is nonempty, convex, and compact, that $f$ is convex and differentiable on~$\cX$. A large body of work shows that, if $f$ is additionally smooth, one obtains the classical sublinear rate $f(x_K)-f(x_\star)=\cO(\nicefrac{1}{K})$, which is generally unimprovable \citep{Hazan2008Sparse, lan2013complexity, Jaggi2013}. This smoothness assumption is often expressed via the \algname{FW} curvature constant $C$ (see~\eqref{eq:curvature}), rather than a global gradient Lipschitz constant \citep{Jaggi2013}.
Stronger guarantees, such as linear convergence, generally require substantially stronger assumptions, e.g, that the solution lies in the interior of $\cX$ \citep{guelat1986some}, or that $f$ is strongly convex together with favorable geometry of the feasible set (most often polyhedral structure) \citep{levitin1966constrained}. Such rates are typically established not for vanilla \algname{FW}, but for enhanced variants such such as away-step, pairwise, or fully corrective methods. Moreover, there are examples where no rate improvement is possible even when $f$ is strongly convex \citep{lan2013complexity}.
Overall, the simplicity of \algname{FW}-type methods comes with well-known limitations (not shared by \algname{PGD}), summarized in \Cref{tbl:main}.
This highlights a fundamental distinction between projection- and LMO-based methods.

It is also useful to distinguish our method from other linearly convergent projection-free variants, such as away-step \algname{FW} \citep{wolfe1970integer}, pairwise \algname{FW}, and fully corrective \algname{FW} methods \citep{lacoste2015global}. Those methods still rely on the \emph{global} linear oracle, but add away/correction mechanisms to overcome the slow boundary behavior of vanilla \algname{FW}; their analyses typically rely on polyhedral geometric constants such as pyramidal width \citep{lacoste2015global}. Our method instead enforces progress by restricting the search to a \emph{local trust region}. In this sense, it is closer in spirit to localized \algname{FW} methods than to away-step or pairwise \algname{FW}.

\section{Summary of contributions}

Below, we list our main contributions. See \Cref{tbl:main} for a summary of some of them.

\begin{table}[t]
\centering
\footnotesize
\begin{threeparttable}

\caption{Comparison of \algname{FW} and \algname{Local LMO} under different regimes. Notation:
$x_\star\in \cX_\star$,
$\delta_k \eqdef f(x_k)-f(x_\star)$,
$R_k \eqdef \|x_k-x_\star\|$,
$\Delta_k \eqdef \norm{\nabla f(x_k) - \nabla f(x_\star)}$,
$c_k \eqdef L_0 + L_1 \|\nabla f(x_k)\|$,
$c_\star \eqdef L_0 + L_1 \|\nabla f(x_\star)\|$,
$D \eqdef \diam \cX$,
and
$G_\gamma(x) \eqdef \frac{1}{\gamma}\rbrac{x-\Proj_{\cX}\rbrac{x-\gamma\nabla f(x)}}$.
}\label{tbl:main}
\begin{tabular}{ccc}
    \toprule
    \bf Property & \bf Frank--Wolfe & \bf Local LMO (new) \\
    \midrule\midrule
    Oracle  & LMO for $\cX$ & LMO for $\cX \cap \cB(x_k,t_k)$ \\
    \midrule
    Projection-free & \cmark & \cmark \\
    \midrule
    
    \begin{tabular}{c}Works even if $\diam \cX=\infty$\end{tabular} & \xmark & \cmark \\
    \midrule
    
    \begin{tabular}{c}Reduces to {\sf GD} if $\cX=\R^d$\end{tabular} & \xmark & \cmark \\
    \midrule 
    
    \begin{tabular}{c}Rate independent of $D$\end{tabular} & \xmark & \cmark \\
    \midrule
    
    \begin{tabular}{c}Rate for  \\
    $G$-gradient bounded \& convex $f$
    \end{tabular}
    & \xmark\tnote{(a)}
    & \begin{tabular}{c}
    $\red
    \min_{0\le k\le K-1}\delta_k
    \overset{\ref{thm:bounded_gradients}}{\le}
    \frac{G R_0}{\sqrt{K}}
    $ \tnote{(h)} \\
    (with radii $t_k = \nicefrac{\delta_k}{\|\nabla f(x_k)\|}$)
    \end{tabular}
    \\ 
    \midrule
    
    \begin{tabular}{c}Rate for  \\
    $L$--smooth \& convex\tnote{(b)}\quad$f$
    \end{tabular}
    & $\delta_K = \Theta \parens{\frac{L D^2}{K}}$
    & \begin{tabular}{c}
    $\red
    \min_{0\le k\le K-1}\Delta_k^2
    \overset{\ref{thm:L-smooth}}{\le}
    \frac{L^2 R_0^2}{K}
    $ \tnote{(h)} \\
    (with radii $t_k= \nicefrac{\Delta_k}{L}$)
    \end{tabular}
    \\
    \midrule
    
    \begin{tabular}{c}Rate for  \\
    $(L_0,L_1)$--smooth \& convex $f$
    \end{tabular}
    & \begin{tabular}{c}
    $\delta_K = \mathcal{O} \parens{\frac{ \rbr{L_0 + L_1 G}D^2}{K}}$
    \tnote{(c)}
    \end{tabular}
    & \begin{tabular}{c}
    $\red
    \min_{0 \le k \le K-1} \Delta_k^2 \overset{\ref{thm:asym_L0L1_rate}}{\le} \frac{ 4c_\star^2R_0^2}{K}
    $
    \tnote{(h, d)} \\
    (with radii $t_k=\nicefrac{\Delta_k\rbr{c_k + c_\star}}{2c_kc_\star}$)
    \end{tabular}
    \\
    \midrule
    
    \begin{tabular}{c}Rate for  \\
    $L$--smooth \& $\mu$--strongly convex $f$
    \end{tabular}
    & $\delta_K = \Theta \parens{\frac{L D^2}{K}}$ 
    & \begin{tabular}{c}
    $\red
    \|x_K-x_\star\|^2
    \overset{\ref{thm:smooth-strong}}{\le}
    \left(\frac{L-\mu}{L+\mu}\right)^{2K} R_0^2$ \tnote{(h)} \\
    (with radii $t_k = \frac{2\sqrt{\mu L}}{L+\mu} R_k$)
    \end{tabular}
    \\
    \midrule
    
    \begin{tabular}{c}Rate for  \\
    $L$--smooth \& non-convex $f$
    \end{tabular}
    & \begin{tabular}{c}
    $\bar{g}_K = \mathcal{O}\parens{\frac{1}{\sqrt{K}}}$ \tnote{(e)}
    \end{tabular}
    & \begin{tabular}{c}
    $\red
    \min_{0 \le k \le K-1}\norm{G_{\nicefrac{1}{L}}(x_k)}^2
    \overset{\ref{thm:local_lmo_nonconvex_smooth}}{\le}
    \frac{2L\delta_0}{K}
    $\tnote{(h)}  \\
    (with radii $t_k = \nicefrac{\norm{G_{\nicefrac{1}{L}}(x_k)}}{L}$)
    \end{tabular}
    \\
    \midrule
    
    \begin{tabular}{c}Rate for  \\
    $L$--smooth \& projected-P\L\tnote{(f)}\quad$f$
    \end{tabular}
    & \xmark\tnote{(g)}
    & \begin{tabular}{c}
    $\red
    \delta_K
    \overset{\ref{thm:local_lmo_projected_pl}}{\le}
    \left(1-\frac{\mu}{L}\right)^K \delta_0
    $ \\
    (with radii $t_k = \nicefrac{\norm{G_{1/L}(x_k)}}{L}$)
    \end{tabular}
    \\
    \bottomrule
\end{tabular}
\begin{tablenotes}
    \scriptsize
    \item[(a)] {\sf FW} is not known to converge in this regime; the rate depends on a curvature constant of $f$ wrt $\cX$ (see \Cref{sec:fw_rev}), which may be $\infty$.
    \item[(b)] \Cref{thm:L-smooth} assumes \emph{strict} convexity. However, it suffices to assume that $\nabla f(x)\neq \nabla f(y) \,\forall x,y\in \cX$, $x\neq y$ (see \Cref{sec:master_thm}).
    \item[(c)] A direct {\sf FW} guarantee under $(L_0,L_1)$--smoothness appears in \cite{vyguzov2025frank}. The bound depends on $\max_{0\le k\le K}\|\nabla f(x_k)\|$ along the trajectory, and is therefore similar in spirit to a bounded gradient assumption.
    \item[(d)] This is a simplified bound in the case $\sqrt{K}>2L_1R_0$, see \Cref{thm:asym_L0L1_rate} for details.
    \item[(e)] For a compact convex $\cX$, the {\sf FW} gap is defined by $g_{\sf FW}(x) \eqdef \max_{s\in \cX}\inner{\nabla f(x)}{x-s}$. For $L$--smooth non-convex objectives, {\sf FW} guarantees that the minimum {\sf FW} gap $\bar{g}_K \eqdef \min_{0 \le k \le K-1} g_{\sf FW}(x_k) = \mathcal{O}(\nicefrac{1}{\sqrt{K}})$ \citep{lacoste2016convergence}.  This uses a different stationarity measure and requires boundedness of $\cX$.
    It is known that $g_{{\sf FW}}(x) \geq \gamma \norm{G_\gamma(x)}^2$.
    Therefore, in this regime, \algnamesmall{Local LMO} enjoys a strictly better convergence rate than \algnamesmall{FW}.
    \item[(f)] Here, projected-P{\L} means that $\frac{1}{2}\norm{G_{\gamma}(x)}^2 \ge \mu \rbrac{f(x)-f(x_\star)}$ for all $x \in \cX$; see \Cref{ass:projected_pl} for more details.
    \item[(g)] We are not aware of a direct Frank--Wolfe guarantee under projected-P\L\, in the above form.
    \item[(h)] Same as {\sf PGD} (see \Cref{sec:pgd_positive}).
\end{tablenotes}
\end{threeparttable}
\end{table}

    $\triangleright$
    \textbf{A new projection-free constrained optimization framework.}  
    We introduce the \emph{Local Linear Minimization Oracle} (\algname{Local LMO}) framework, which replaces the global linear minimization used in \algname{FW} with a local oracle restricted to $\cX \cap \cB(x_k,t_k)$. This simple modification fundamentally changes the theoretical and empirical behavior of LMO-based methods and bridges projection-free optimization with \algname{GD}.
    While related ideas exist in the literature on \emph{local linear optimization oracles} (\algname{LLOO}), these are typically framed as improved \algname{FW} variants. In contrast, our motivations, theoretical results, and interpretations arise from a different geometric perspective; see the detailed discussion in \Cref{sec:lloo}.\footnote{We developed the {\sf Local LMO} formulation independently and became aware of the {\sf LLOO} literature only during the literature review.}

    $\triangleright$ \textbf{GD-type guarantees without global geometric dependence.}  
    We show that our method matches the convergence rates of \algname{PGD} across three classical regimes: $\mathcal{O}(\nicefrac{1}{K})$ in the smooth convex case, $\mathcal{O}(\nicefrac{1}{\sqrt{K}})$ under bounded gradients, and linear convergence of iterates under strong convexity.
    These guarantees hold \emph{without requiring the constraint $\cX$ to be bounded} and \emph{without any dependence on global geometric quantities} such as the diameter of $\cX$ or \algname{FW}--style curvature constants (see 
    \eqref{eq:curvature}). Instead, as in vanilla \algname{GD}, they depend on the distance of the starting point (assumed to belong to~$\cX$) and an optimal point $x_\star \in \cX_\star$.
    In contrast, \algname{FW} requires bounded domains, and its complexity depends explicitly on $\diam \cX$. \algname{FW} may not converge for non-smooth functions \citep[Example 1]{Nesterov-2017}, whereas our \Cref{alg:new} naturally applies in this regime and guarantees
    \(f(\hat x_K) - f(x_\star) \leq \nicefrac{G\|x_0-x_\star\|}{\sqrt{K}}\) for the average iterate \(\hat x_K \eqdef \frac{1}{K}\sum_{k=0}^{K-1} x_k
    \); see also \Cref{sec:nondiff}. Further, the iterates of \algname{FW} may not converge \citep{FW-iterates-do-not-converge}.
    
    $\triangleright$ \textbf{A new perspective on projection-free optimization.}
    \algname{Local LMO} should best be viewed as a \emph{generalization of \algname{GD}} rather than a variant of \algname{FW} (see \Cref{sec:GD-subspace}). In particular, when $\cX=\R^d$, \algname{Local LMO} reduces to \algname{GD},
    \begin{align}\label{eq:gd}
        \squeeze x_{k+1} = x_k-\gamma_k\nabla f(x_k),
    \end{align}
    with stepsize $\gamma_k=\nicefrac{t_k}{\|\nabla f(x_k)\|}$.
    When $\cX$ is an affine subspace of $\R^d$, i.e., $\cX=a+\cV$, where $a\in \R^d$ and $\cV\subseteq \R^d$ is a linear subspace, it reduces to \algname{GD} for minimizing the restriction of $f$ to $\cX$.
    This viewpoint explains why the method inherits \algname{GD}-type convergence behavior.

    $\triangleright$ \textbf{Extensions.} In the main paper, we focus on the convex theory under standard smoothness assumptions with a deterministic objective. Extensions to i) $(L_0,L_1)$--smooth, ii) $L$--smooth non-convex  (both included in  \Cref{tbl:main}), and further to iii) non-differentiable convex , and iv) stochastic  (not included in  \Cref{tbl:main}) regimes are provided in \Cref{sec:nondiff,sec:L0L1case,sec:non-convex,sec:stochastic}.

    $\triangleright$ \textbf{Analysis of the oracle.}  
    Just like \algname{FW}, \algname{Local LMO} relies on the linear minimization oracle only, and does not require access to a projection oracle. However, it replaces the global oracle over~$\cX$ with a local one over~$\cX \cap \cB(x_k,t_k)$, which is potentially more involved. We discuss the computational implications of this oracle and show that it can often be implemented efficiently in \Cref{sec:closed_form}.

\section{Theory}\label{sec:theory}

We now present the main theoretical properties of \algname{Local LMO} (\Cref{alg:new}). We begin by stating the (weak) standing assumptions on~$\cX$ and~$f$, and then establish convergence guarantees in several regimes.
\begin{assumption}\label{ass:main} 
We assume that
\begin{itemize}
	\item [(i)] the constraint set $\cX\subseteq \R^d$ is nonempty, closed, and convex;
	\item [(ii)] the loss function $f:\R^d\to\R$  differentiable\footnote{All results of this paper hold if $f$ is merely defined   on some open convex set containing $\cX$. Differentiability is not essential; some results extend beyond this setting--see \Cref{sec:nondiff}.} and not necessarily convex;
	\item [(iii)] the set $\cX_{\star} \eqdef \argmin_{x\in \cX} f(x)$
    of constrained minimizers of $f$ is nonempty.
\end{itemize}
\end{assumption}

\Cref{ass:main} ensures that the local subproblems \eqref{eq:method} are well-posed. Indeed, for any $x_k\in \cX$ and any $t_k\ge 0$, the set $\cX\cap \cB(x_k,t_k)$ is nonempty, closed, and bounded, and consequently, the local subproblem~\eqref{eq:method} admits at least one solution (see \Cref{thm:well-posedness}).

\subsection{Master theorem}\label{sec:master_thm}

We start with a fundamental one-step result that underlies all subsequent convergence guarantees: under appropriate radius choices, each iteration produces a decrease in distance to the solution set. Notably, this result holds without having to assume convexity of $f$.

\begin{restatable}[One-step behavior of \algname{Local LMO}]{theorem}{ONESTEP}\label{thm:descent}
    Let \Cref{ass:main} hold. Fix $x_k\in \cX$, $x_\star\in \cX_\star$, and let one of the following radius admissibility conditions be satisfied:
    \begin{align}\label{eq:radius-condition-1}&\text{\bf Type-I admissibility:}
&    \squeeze 
\nabla f(x_k)\neq 0,
    & \quad  \squeeze 
    0<t_k\le \frac{\langle \nabla f(x_k),\,x_k-x_\star\rangle}{\|\nabla f(x_k)\|}, \\
\label{eq:radius-condition-2}&\text{\bf Type-II admissibility:}  &
    \squeeze 
    \nabla f(x_{k})\neq \nabla f(x_{\star}), & \quad 
    \squeeze  
    0 < t_k \le \frac{\langle \nabla f(x_{k})-\nabla f(x_{\star}),\,x_{k}-x_{\star}\rangle}{ \|\nabla f(x_{k})-\nabla f(x_{\star})\|}.
    \end{align}
    
    Let $x_{k+1}$ be obtained from $x_k$ after one step of \algname{Local LMO} (\Cref{alg:new}) with radius $t_k$, i.e.,
     \[
    \squeeze x_{k+1}\in \argmin\limits_{z\in \cX\cap \cB(x_k,t_k)} \langle \nabla f(x_k),z\rangle.
    \]
        Then
    \begin{itemize}
    \item [(i)] $x_\star$ does not belong to the interior of the ball $\cB(x_k,t_k)$, i.e.,
    \begin{equation}\label{eq:thm_radius_bound}t_k\leq \|x_k-x_\star\|;\end{equation} 
    \item [(ii)] $x_{k+1}$ lies on the boundary of the ball $\cB(x_k,t_k)$, i.e., $\|x_{k+1}-x_k\| = t_k$;
    \item [(iii)] the squared distance to $x_\star$ decreases by at least the square of the radius, i.e.,
    \begin{equation}\label{eq:thm_squared_distance_decrease}
    \|x_{k+1}-x_\star\|^2
    \le
    \|x_k-x_\star\|^2-t_k^2.
    \end{equation}
    \end{itemize}
\end{restatable}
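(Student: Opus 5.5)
The plan is to work with a single vector $h$ that encodes the admissibility condition: $h \eqdef \nabla f(x_k)$ under Type-I, and $h \eqdef \nabla f(x_k)-\nabla f(x_\star)$ under Type-II. In both cases $h\neq 0$ and $t_k\|h\| \le \langle h, x_k-x_\star\rangle$. Part (i) is then immediate from Cauchy--Schwarz: $t_k\|h\|\le \|h\|\|x_k-x_\star\|$. I would also record one fact used repeatedly below. Write $g\eqdef\nabla f(x_k)$ and $u\eqdef x_{k+1}-x_k$. The aim is
\[
\langle g, x_\star - x_{k+1}\rangle \le 0 \qquad (\star)
\]
for \emph{any} $x_{k+1}\in\cX\cap\cB(x_k,t_k)$, which I would establish separately for the two types.

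\textbf{Type-I.} Here $\langle g,u\rangle \ge -t_k\|g\|\ge \langle g, x_\star-x_k\rangle$.

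\textbf{Type-II.} I would use the first-order necessary optimality condition at $x_\star$ over the convex set $\cX$, namely $\langle \nabla f(x_\star), z-x_\star\rangle\ge 0$ for all $z\in\cX$; this needs no convexity of $f$. With $z=x_{k+1}$ it gives
\[
\langle g, x_\star-x_{k+1}\rangle\le \langle h, x_\star-x_{k+1}\rangle=\langle h,x_\star-x_k\rangle+\langle h,x_k-x_{k+1}\rangle\le -t_k\|h\|+t_k\|h\|=0 .
\]

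Next I would turn the local LMO into a projection step via Lagrangian duality for the ball constraint. The subproblem $\min\{\langle g,z\rangle: z\in\cX,\ \|z-x_k\|^2\le t_k^2\}$ is convex, and Slater's condition holds because $x_k\in\cX$ is strictly inside the ball. Hence there is a multiplier $\nu\ge 0$ such that $x_{k+1}$ minimizes $\langle g,z\rangle+\frac{\nu}{2}\|z-x_k\|^2$ over $\cX$, with complementary slackness $\nu(\|u\|^2-t_k^2)=0$.

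\textbf{Main case $\nu>0$.} Complementary slackness gives $\|u\|=t_k$, which is (ii). Moreover $x_{k+1}=\Proj_{\cX}(x_k-g/\nu)$, so the projection inequality $\langle x_k-g/\nu-x_{k+1}, x_\star-x_{k+1}\rangle\le 0$ yields $\langle u, x_\star-x_{k+1}\rangle\ge -\frac1\nu\langle g,x_\star-x_{k+1}\rangle$. Expanding $\|x_{k+1}-x_\star\|^2=\|x_k-x_\star\|^2-\|u\|^2-2\langle u,x_\star-x_{k+1}\rangle$ and applying $(\star)$ gives (iii).

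\textbf{Degenerate case $\nu=0$.} This is the step I expect to be the main obstacle, since the projection interpretation is unavailable. Here $x_{k+1}$ is a global minimizer of $\langle g,\cdot\rangle$ over $\cX$, so $\langle g,x_{k+1}\rangle\le\langle g,x_\star\rangle$. Combined with $(\star)$, every inequality in the chains above must be an equality.
\begin{itemize}
\item Under Type-I, $\langle g,u\rangle=-t_k\|g\|$ with $\|u\|\le t_k$ forces $u=-t_k g/\|g\|$.
\item Under Type-II, $\langle h,x_k-x_{k+1}\rangle=t_k\|h\|$ forces $u=-t_k h/\|h\|$.
\end{itemize}
Either way (ii) holds, and $\langle u, x_\star-x_k\rangle = t_k\langle h,x_k-x_\star\rangle/\|h\|\ge t_k^2$. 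Then $\|x_{k+1}-x_\star\|^2=\|x_k-x_\star\|^2+2\langle u,x_k-x_\star\rangle+t_k^2\le \|x_k-x_\star\|^2-t_k^2$, which is (iii).
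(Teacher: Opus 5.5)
Your proof is correct and follows essentially the same route as the paper: the half-space inequality $(\star)$, a KKT multiplier for the ball constraint with the split $\nu>0$ versus $\nu=0$, the normal-cone (equivalently, projection) inequality in the first case, and the Cauchy--Schwarz equality argument forcing $u=-t_k h/\|h\|$ in the degenerate case. The differences are only cosmetic. You unify both admissibility types through $h$, and you obtain the multiplier from Lagrangian duality with an explicit Slater point ($x_k$ lies in $\cX$ and in the interior of the ball), which justifies the normal-cone sum rule that the paper invokes without comment.
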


\begin{proof}
In the Type-I Admissibility case, the proof can be found in \Cref{sec:proof-I}, and in the Type-II admissibility case, refer to \Cref{sec:proof-II}.
\end{proof}

The key inequality \eqref{eq:thm_squared_distance_decrease} shows that each step yields a \emph{Fej\'er-type decrease} with respect to the solution set~$\cX_\star$. 
Note that \eqref{eq:thm_radius_bound} follows from \eqref{eq:thm_squared_distance_decrease}. However, while \eqref{eq:thm_radius_bound} is a simple consequence of Cauchy--Schwarz, the proof of \eqref{eq:thm_squared_distance_decrease} is more involved. So, while not necessary from a mathematical perspective, we decided to state and prove \eqref{eq:thm_radius_bound} separately to make it clearer and intuitive why \eqref{eq:thm_radius_bound} holds.

A subtle point concerns the admissibility conditions in \Cref{thm:descent}, which involve ratios of the form \eqref{eq:radius-condition-1}-\eqref{eq:radius-condition-2}. These expressions require that the corresponding gradients (or gradient differences) are nonzero. When the denominator in \eqref{eq:radius-condition-1} vanishes, the situation can be treated separately: if $\nabla f(x_k)=0$, then $x_k$ is a first-order stationary point, which in our setting implies optimality. 
This highlights an interesting structural feature of the framework. When $\nabla f(x_\star)=0$, the constraint set plays no active role at the solution, since the unconstrained minimizer already lies in $\cX$. In this case, the problem effectively reduces to unconstrained optimization, and one could in principle discard the constraint and run vanilla \algname{GD} instead. Nevertheless, even when the constraint is inactive at optimality, it may still provide algorithmic benefit: prior knowledge that $x_\star \in \cX$ can be exploited to improve performance.
More generally, our method can leverage such structural information by inheriting the improved conditioning of the restricted problem. For instance, when $\cX$ is an affine subspace of $\R^d$, \Cref{alg:new} reduces to \algname{GD} on that subspace, and its convergence is governed by the corresponding subspace condition number rather than the ambient one. This leads to acceleration whenever the objective is better conditioned along the constraint set.
In contrast, \algname{PGD} applies a full-dimensional gradient step followed by projection, which yields a theory governed by global geometric quantities rather than the geometry of the active subspace.

The Type-II admissibility condition \eqref{eq:radius-condition-2} is more delicate. Since it involves gradient differences, the denominator can vanish without implying optimality, and hence such a condition cannot be used as a stopping criterion without additional assumptions. To rule out such degeneracies, one must assume certain identifiability assumptions (e.g., strict convexity; see \Cref{lem:strict_convexity}), which guarantee that equality of gradients implies equality of points.

In the remainder of this section, we apply the general result of \Cref{thm:descent} under three standard\footnote{The last two of these are standard in the {\sf PGD} literature, but not in the {\sf FW} literature.} additional assumptions on~$f$ to obtain iteration complexity results for our new method. All proofs are deferred to the appendix. 

\subsection{Theory for $L$--smooth strictly convex functions}

We first consider the case where $f$ is $L$--smooth and strictly convex.

\begin{restatable}[Rate under $L$--smoothness]{theorem}{LSMOOTH}\label{thm:L-smooth}
    Let \Cref{ass:main} hold, and assume that $f$ is $L$--smooth and strictly convex. If $\{x_k\}_{k\ge 0}$ are the iterates generated by \Cref{alg:new}  with radii
    \begin{equation}\label{eq:smooth-radius}
    t_k=\tfrac{\|\nabla f(x_{k})-\nabla f(x_{\star})\|}{L},
    \end{equation}
     where $x_0\in \cX$ and $x_\star\in \cX_\star$,
    then, for every $K\ge 1$,
    \begin{equation}\label{eq:bgd-smooth-residual-sum}
        \squeeze \min\limits_{0\le k\le K-1}\|\nabla f(x_{k})-\nabla f(x_{\star})\|^2 \leq \frac{1}{K}\sum\limits_{k=0}^{K-1}\|\nabla f(x_{k})-\nabla f(x_{\star})\|^2
        \leq \frac{L^2\|x_{0} - x_\star\|^2}{K}.
    \end{equation}
\end{restatable}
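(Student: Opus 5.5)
The plan is to derive the rate from the Type-II branch of \Cref{thm:descent}, by showing that the radius \eqref{eq:smooth-radius} is Type-II admissible and then telescoping the Fej\'er inequality \eqref{eq:thm_squared_distance_decrease}.

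\textbf{Step 1 (admissibility).} I will use co-coercivity of gradients of convex $L$--smooth functions on $\R^d$:
\[
\langle \nabla f(x)-\nabla f(y),x-y\rangle \ge \tfrac{1}{L}\|\nabla f(x)-\nabla f(y)\|^2 .
\]
If $f$ is only defined on an open convex set containing $\cX$, I will invoke the standard version of this inequality that holds there. Take $x=x_k$ and $y=x_\star$. Whenever $\nabla f(x_k)\neq\nabla f(x_\star)$, dividing by $\|\nabla f(x_k)-\nabla f(x_\star)\|$ gives
\[
t_k=\frac{\|\nabla f(x_k)-\nabla f(x_\star)\|}{L}\le \frac{\langle \nabla f(x_k)-\nabla f(x_\star),x_k-x_\star\rangle}{\|\nabla f(x_k)-\nabla f(x_\star)\|}.
\]
Together with $t_k>0$, this is exactly the Type-II condition \eqref{eq:radius-condition-2}.

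\textbf{Step 2 (degenerate case).} Suppose instead that $\nabla f(x_k)=\nabla f(x_\star)$ at some iteration. Strict convexity gives injectivity of the gradient on $\cX$ (\Cref{lem:strict_convexity}), so $x_k=x_\star$. Then $t_k=0$, and since the iterates stay in $\cX$, the oracle over $\cX\cap\cB(x_k,0)=\{x_k\}$ returns $x_{k+1}=x_k$. Hence every later term of the sum in \eqref{eq:bgd-smooth-residual-sum} is zero, and the inequality $\|x_{k+1}-x_\star\|^2\le \|x_k-x_\star\|^2-t_k^2$ holds trivially from that point on. So this inequality holds at every $k$.

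\textbf{Step 3 (telescoping).} Apply \Cref{thm:descent}(iii) at each nondegenerate step and Step 2 otherwise, then sum over $k=0,\dots,K-1$:
\[
\sum_{k=0}^{K-1} \frac{\|\nabla f(x_k)-\nabla f(x_\star)\|^2}{L^2}\le \|x_0-x_\star\|^2-\|x_K-x_\star\|^2\le \|x_0-x_\star\|^2 .
\]
Multiplying by $L^2/K$ gives the right inequality in \eqref{eq:bgd-smooth-residual-sum}. The left inequality is simply that a minimum is at most an average.

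The only real obstacle is the degenerate case in Step 2: it needs the injectivity supplied by strict convexity, and it needs to hold along the entire trajectory. As footnote (b) of \Cref{tbl:main} notes, injectivity of $\nabla f$ on $\cX$ is all that is required there. Co-coercivity itself is standard.
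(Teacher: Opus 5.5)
Your proposal is correct and follows essentially the same route as the paper: co-coercivity certifies that $t_k$ is Type-II admissible, then \Cref{thm:descent}(iii) is applied, the inequalities are telescoped, and the minimum is bounded by the average. Your explicit handling of the degenerate case $\nabla f(x_k)=\nabla f(x_\star)$ is slightly more careful than the paper's proof, which leaves this point to \Cref{remark:denum_cond}: via \Cref{lem:strict_convexity} this case forces $x_k=x_\star$ and $t_k=0$, so the trajectory stays at $x_\star$ from then on.
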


\begin{remark}\label{remark:denum_cond}
    The proof of \Cref{thm:L-smooth} relies on the Type-II admissibility condition in \eqref{eq:radius-condition-2}. Consequently, we must ensure that the denominator in \eqref{eq:radius-condition-2} does not vanish whenever this condition is invoked. To exclude this degeneracy, we assume that $f$ is strictly convex; more generally, any condition ensuring that $\nabla f(x_k)=\nabla f(x_\star)$ implies $x_k=x_\star$ is sufficient.
\end{remark}
\begin{remark}
    If $\nabla f(x_\star)=0$, the radius choice in \eqref{eq:smooth-radius} simplifies to 
    $t_k=\nicefrac{\|\nabla f(x_k)\|}{L}$. When $\cX=\R^d$, this translates into the standard \algname{GD} stepsize $\gamma_k=\nicefrac{t_k}{\norm{\nabla f(x_k)}} = \nicefrac{1}{L}$ (see~\eqref{eq:gd} and \citet[Theorem 3.2]{gruntkowska2025ball}).
\end{remark}
The rate in \Cref{thm:L-smooth} matches the ``correct'' rate of \algname{PGD} in this setting, both in its dependence on $K$ and on $L$. At first glance, the quadratic dependence on $L$ in \eqref{eq:bgd-smooth-residual-sum} may seem surprising. Indeed, standard analyses of \algname{PGD} control function value suboptimality, yielding $f(x_K)-f(x_\star)\le \nicefrac{L\|x_0-x_\star\|^2}{2K}$, which scales linearly in $L$. However, \eqref{eq:bgd-smooth-residual-sum} concerns a different quantity, namely $\min_{0\le k\le K-1}\|\nabla f(x_k)-\nabla f(x_\star)\|^2$.
As we show in \Cref{sec:pgd_positive}, \algname{PGD} satisfies an analogous bound with the same $\nicefrac{L^2\|x_{0} - x_\star\|^2}{K}$ scaling. Moreover, \Cref{sec:pgd_negative} provides a counterexample ruling out a linear dependence on $L$ in general, confirming that this is \emph{the correct dependence for this metric}. Thus, up to the mild additional assumption of strict convexity, \emph{\algname{Local LMO} achieves the same rate as \algname{PGD}, despite operating in a projection-free oracle model}.

In contrast, in the same regime, \algname{FW} methods guarantee $f(x_K)-f(x_\star)\le \nicefrac{2L\operatorname{diam}^2 \cX}{(K+2)}$, with the critical limitation that the rate depends explicitly on the diameter of the constraint set \citep{pokutta2024frank}. The resulting $\cO(\nicefrac{L\,\operatorname{diam}^2\cX}{\varepsilon})$ bound on the number of LMO calls to achieve an $\varepsilon$-accuracy is tight for any method accessing $\cX$ solely through an LMO~\citep{lan2013complexity,Jaggi2013}.

\subsection{Theory for $L$--smooth and $\mu$--strongly convex functions}\label{sec:smooth-strong}

We now establish a convergence result in the $L$--smooth and $\mu$--strongly convex regime.

\begin{restatable}[Linear convergence]{theorem}{SMOOTHSTRONG}\label{thm:smooth-strong}
    Let \Cref{ass:main} hold, and further assume that  $f$ is $L$--smooth and $\mu$--strongly convex on \(\cX\). Let $\{x_{k}\}_{k\geq 0}$ be the iterates generated by \Cref{alg:new}, where $x_0\in \cX$. Let $x_\star \in \cX_{\star}$ and fix some iteration counter $k\geq 0$.  If 
    \(\nabla f(x_{k})\neq \nabla f(x_{\star})\)
    and the radius $t_k$ satisfies
    \begin{align}\label{eq:radius_smooth_strong}
        \squeeze t_k = \theta \|x_k - x_\star\|, \qquad \theta=\frac{2\sqrt{\mu L}}{L+\mu},
    \end{align}
    then $\|x_{k+1}-x_{\star}\|^2 \le \left(1-\theta^2\right) \|x_{k}-x_{\star}\|^2 = \parens{\frac{L-\mu}{L+\mu}}^2 \|x_{k}-x_{\star}\|^2$. Hence, for any $K\geq 0$
    \begin{align}\label{eq:linear-improved}
        \squeeze \|x_K-x_{\star}\|^2 \le \left(\frac{L-\mu}{L+\mu}\right)^{2K} \|x_{0}-x_{\star}\|^2.
    \end{align}
\end{restatable}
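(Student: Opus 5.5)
The plan is to reduce the statement to the Master Theorem (\Cref{thm:descent}) under Type-II admissibility. Once we know that the radius $t_k=\theta\|x_k-x_\star\|$ satisfies \eqref{eq:radius-condition-2}, part (iii) of \Cref{thm:descent} gives
\[
\|x_{k+1}-x_\star\|^2\le \|x_k-x_\star\|^2-t_k^2=(1-\theta^2)\|x_k-x_\star\|^2 .
\]
A direct computation shows $1-\theta^2=\frac{(L+\mu)^2-4\mu L}{(L+\mu)^2}=\left(\frac{L-\mu}{L+\mu}\right)^2$. Unrolling over $k=0,\dots,K-1$ then yields \eqref{eq:linear-improved}. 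If at some iteration $\nabla f(x_k)=\nabla f(x_\star)$, strong convexity forces $x_k=x_\star$. In that case the bound holds trivially from then on: the radius is $0$, so the iterate stays put, or we interpret the step as stopping.

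The core step is to verify admissibility, i.e.\ to show, with $r=\|x_k-x_\star\|$, $g=\nabla f(x_k)-\nabla f(x_\star)\neq 0$ and $\Delta=\|g\|$, that
\[
\theta r\le \frac{\langle g, x_k-x_\star\rangle}{\Delta}.
\]
I will use the standard co-coercivity-type inequality for $L$--smooth, $\mu$--strongly convex functions:
\[
\langle g,x_k-x_\star\rangle\ge \frac{\mu L}{L+\mu}r^2+\frac{1}{L+\mu}\Delta^2 .
\]
Dividing by $\Delta$ gives
\[
\frac{\langle g,x_k-x_\star\rangle}{\Delta}\ge \frac{1}{L+\mu}\left(\frac{\mu L r^2}{\Delta}+\Delta\right).
\]
By AM--GM, $\frac{\mu L r^2}{\Delta}+\Delta\ge 2\sqrt{\mu L}\,r$. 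Hence the ratio is at least $\frac{2\sqrt{\mu L}}{L+\mu}r=\theta r$, which is exactly the required bound. This also explains where the choice of $\theta$ comes from.

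The main obstacle is that the co-coercivity inequality is classically proved for functions that are $L$--smooth and $\mu$--strongly convex on all of $\R^d$. Here strong convexity is assumed only on $\cX$, while both $x_k$ and $x_\star$ lie in $\cX$. My first approach is to apply the standard proof to $h=f-\frac{\mu}{2}\|\cdot\|^2$, which is convex and $(L-\mu)$--smooth. Its co-coercivity, $\langle\nabla h(x)-\nabla h(y),x-y\rangle\ge\frac{1}{L-\mu}\|\nabla h(x)-\nabla h(y)\|^2$, rearranges into the displayed inequality. The difficulty is that the usual proof of co-coercivity for $h$ evaluates points outside $\cX$. If the paper intends strong convexity globally, which I expect, this is immediate. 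Otherwise, I would argue directly along the segment $[x_\star,x_k]\subseteq\cX$, or invoke a known extension result. I would flag this point but treat it as the standard fact. Finally, the case $\mu=L$ gives $\theta=1$, and then (i) together with (iii) of \Cref{thm:descent} yields $x_{k+1}=x_\star$, which is consistent with the claimed bound.
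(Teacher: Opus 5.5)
Your proposal is correct and follows essentially the paper's route: the paper packages your co-coercivity-plus-AM--GM step as a ``reverse Cauchy--Schwarz'' lemma, $\theta\|\nabla f(x)-\nabla f(y)\|\,\|x-y\|\le\langle\nabla f(x)-\nabla f(y),x-y\rangle$, and uses it to verify Type-II admissibility; it then invokes part (iii) of the master theorem and unrolls, using that strict (strong) convexity forces $x_k=x_\star$ whenever the gradients coincide. Your caveat about strong convexity being assumed only on $\cX$ is fair: the paper states that lemma for all $x,y\in\R^d$, so it tacitly relies on the global version.
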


In the strongly convex setting, $\nabla f(x)\neq \nabla f(y)$ for any distinct $x,y\in \cX$ (see \Cref{lem:strict_convexity}), so the degeneracy discussed in \Cref{remark:denum_cond} cannot occur.  The contraction factor in $\nicefrac{(L-\mu)^2}{(L+\mu)^2}$ coincides exactly with the squared-distance contraction of optimally tuned projected/proximal Gradient Descent with stepsize $\nicefrac{2}{(L+\mu)}$ \citep{schmidt2011convergence}.
The rate in \eqref{eq:linear-improved} is strictly sharper than the classical $1-\nicefrac{\mu}{L}$ factor obtained with the standard stepsize $\nicefrac{1}{L}$, improving it by nearly a factor of two in the exponent.
Thus, \algname{Local LMO} matches the best possible linear rate of (non-accelerated) \algname{PGD} in the smooth strongly convex regime. In contrast, \algname{FW} does not admit linear convergence in general: lower bounds show that their rate remains sublinear even for strongly convex objectives \citep{lan2013complexity}. Further, it is known that the iterates of \algname{FW} may not converge \citep{FW-iterates-do-not-converge}.

\subsection{Theory for convex functions with $G$-bounded gradients}

Finally, we consider the convex non-smooth regime with bounded gradients.

\begin{restatable}[Rate under bounded gradients]{theorem}{BDGRADS}\label{thm:bounded_gradients}
    Let \Cref{ass:main} hold, let $f$ be convex, and assume that  there exists $G>0$ such that \begin{equation}\label{eq:grad-bounded}
    \|\nabla f(x)\|\le G
    \qquad \forall x\in \cX.
    \end{equation}
Let $\{x_k\}_{k\ge 0}$ be the iterates generated by \algname{Local LMO}, where $x_0\in \cX$. Further, let $x_\star\in \cX_\star$, and
    assume that whenever $x_k\neq x_\star$, we choose the ``Polyak radius'' 
    \begin{equation}\label{eq:polyak-radius-bounded}
        \squeeze t_k \eqdef \frac{f(x_k)-f(x_\star)}{\|\nabla f(x_k)\|},
    \end{equation}
    and $t_k=0$ when $x_k=x_\star$. Then for every $K\ge 1$, 
    \begin{equation}\label{eq:square-sum-bound}
        \squeeze \frac{1}{K}\sum_{k=0}^{K-1} (f(x_k)-f(x_\star))^2
        \le \frac{G^2\|x_0-x_\star\|^2}{K}.
    \end{equation}
    Moreover, the average iterate $\hat x_K \eqdef \frac{1}{K}\sum_{k=0}^{K-1} x_k$ satisfies
    \begin{equation}\label{eq:avg_iter_G}
        \squeeze f(\hat x_K) - f(x_\star) \leq \frac{G\|x_0-x_\star\|}{\sqrt{K}}.
    \end{equation}
\end{restatable}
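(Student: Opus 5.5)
The argument feeds the Polyak radius into \Cref{thm:descent} under Type-I admissibility, telescopes the resulting Fej\'er-type decrease, and finishes with Jensen and Cauchy--Schwarz.

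\textbf{Step 1 (admissibility).} Fix $k$ with $x_k\neq x_\star$. If $f(x_k)=f(x_\star)$, then $x_k$ is itself optimal. In that case we can either take $t_k=0$, which gives $x_{k+1}=x_k$, or simply note that the $k$-th term contributes $0$ to the sum in \eqref{eq:square-sum-bound}. So assume $f(x_k)>f(x_\star)$.

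Convexity gives $\langle \nabla f(x_k),x_k-x_\star\rangle\ge f(x_k)-f(x_\star)>0$. Hence $\nabla f(x_k)\neq 0$, and
\[
t_k=\frac{f(x_k)-f(x_\star)}{\|\nabla f(x_k)\|}\le \frac{\langle \nabla f(x_k),x_k-x_\star\rangle}{\|\nabla f(x_k)\|},
\]
which is exactly the Type-I condition \eqref{eq:radius-condition-1}. (This is also why the radius is well defined whenever $x_k\ne x_\star$ is suboptimal.)

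\textbf{Step 2 (one-step decrease).} \Cref{thm:descent}(iii) now yields
\[
\|x_{k+1}-x_\star\|^2\le \|x_k-x_\star\|^2-t_k^2.
\]
Using \eqref{eq:grad-bounded}, we have $t_k^2\ge (f(x_k)-f(x_\star))^2/G^2$. Therefore
\[
(f(x_k)-f(x_\star))^2\le G^2\bigl(\|x_k-x_\star\|^2-\|x_{k+1}-x_\star\|^2\bigr).
\]
This inequality also holds trivially in the degenerate cases ($x_k=x_\star$ with $t_k=0$, or $f(x_k)=f(x_\star)$ with $t_k=0$). There the left side is $0$, and the right side is $\ge 0$ because $x_{k+1}=x_k$.

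\textbf{Step 3 (telescoping).} Summing the inequality of Step 2 over $k=0,\dots,K-1$ and dropping $-\|x_K-x_\star\|^2\le 0$ gives
\[
\sum_{k=0}^{K-1}(f(x_k)-f(x_\star))^2\le G^2\|x_0-x_\star\|^2.
\]
Dividing by $K$ gives \eqref{eq:square-sum-bound}.

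\textbf{Step 4 (averaged iterate).} Each $x_k\in\cX$ and $\cX$ is convex, so $\hat x_K\in\cX$. Jensen's inequality gives
\[
f(\hat x_K)-f(x_\star)\le \frac1K\sum_{k=0}^{K-1}(f(x_k)-f(x_\star)).
\]
Cauchy--Schwarz, or equivalently the power-mean inequality, bounds the right-hand side by
\[
\Bigl(\frac1K\sum_{k=0}^{K-1}(f(x_k)-f(x_\star))^2\Bigr)^{1/2}\le \frac{G\|x_0-x_\star\|}{\sqrt K},
\]
which is \eqref{eq:avg_iter_G}.

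\textbf{Main obstacle.} The only delicate point is the degenerate iterations, where the Polyak radius is undefined or vanishes while $x_k\neq x_\star$ (for example, there are multiple minimizers). I would handle these by interpreting $t_k=0$ as staying put. The key inequality of Step 2 holds trivially there, so the telescoping argument is unaffected.
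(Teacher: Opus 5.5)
Your proposal is correct and follows the paper's proof essentially step for step: convexity makes the Polyak radius Type-I admissible, \Cref{thm:descent}(iii) together with $\|\nabla f(x_k)\|\le G$ gives a telescoping bound, and Jensen plus Cauchy--Schwarz handle the averaged iterate. Your explicit treatment of the degenerate case $x_k\neq x_\star$ with $f(x_k)=f(x_\star)$ is more careful than the paper's proof, which only separates out $x_k=x_\star$. In that case the Polyak radius is $0$ or $0/0$, and Type-I admissibility, which requires $t_k>0$, does not apply.
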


\Cref{thm:bounded_gradients} establishes an $\mathcal{O}(\nicefrac{1}{\sqrt{K}})$ convergence rate in function value suboptimality. The adaptive radius in \eqref{eq:polyak-radius-bounded} can be interpreted as a normalized Polyak-type step \citep{polyak1987introduction}, which reduces to the classical Polyak stepsize in the unconstrained setting (the standard Polyak rule $\nicefrac{(f(x_k)-f(x_\star))}{\|\nabla f(x_k)\|^2}$ corresponds exactly to \eqref{eq:polyak-radius-bounded} when $\cX = \R^d$ \citep[Remark F.5]{gruntkowska2025ball}).
The bound \eqref{eq:avg_iter_G} matches the rate achieved by \algname{PGD} with Polyak stepsizes \citep[Theorem 8.13]{Beck2017First}.
Notably, this regime lies outside the standard scope of \algname{FW} analysis: classical \algname{FW} convergence guarantees depend on curvature constants of the objective over the domain, which may be infinite even when gradients are uniformly bounded (see \Cref{ex:fw-curvature-infinite}).
This bounded gradient regime is also the most relevant one for modern deep learning, where global smoothness or strong convexity assumptions are rarely realistic. 

\subsection{Theory in other regimes}

The results above cover selected important deterministic convex regimes. Due to page limitations, the formal results capturing the remaining guarantees mentioned in \Cref{tbl:main} are deferred to the appendix: the $(L_0, L_1)$--smooth convex case is treated in \Cref{sec:L0L1case}, and the smooth non-convex and projected-P\L\ regimes are addressed in \Cref{sec:non-convex}. The appendix contains further extensions: to non-differentiable convex functions with $G$-bounded subgradients in \Cref{sec:nondiff}, and to stochastic setting in \Cref{sec:stochastic}.

\subsection{Practical considerations}

Across all regimes considered above, \algname{Local LMO} matches the strong convergence guarantees of \algname{PGD} while avoiding projections. However, for a fair comparison with projection-based methods, it is important to acknowledge that the stepsize (radius) rules prescribed in \Cref{thm:L-smooth,thm:smooth-strong,thm:bounded_gradients} are in general not directly implementable in practice.
Indeed, these rules depend on quantities that are typically unknown, such as $\nabla f(x_\star)$, $\|x_k - x_\star\|$, and $f(x_\star)$, as well as problem parameters like $L$ and $\mu$. While the Polyak-type choice in \Cref{eq:polyak-radius-bounded} is classical and can often be approximated in practice, the other radius rules are more intricate than standard \algname{GD} stepsizes. This highlights a possible trade-off between strong guarantees and implementability. That said, the theoretical prescriptions still provide valuable guidance for practical design. For instance, in the strongly convex setting, \Cref{thm:smooth-strong} suggests choosing $t_k = \theta \|x_k - x_\star\|$, where the distances $\|x_k - x_\star\|$ contract linearly. This naturally motivates using a geometrically decaying schedule of the form $t_k = c q^k$, with tunable parameters $c>0$ and $q\in(0,1)$. Such a rule eliminates the need for problem-dependent quantities while retaining the qualitative behavior predicted by the theory. We validate this approach empirically in \Cref{sec:experiments} and \Cref{sec:comparison-geometric-radius}, showing that this simple geometric schedule nearly matches the strong performance of the optimal adaptive radii.

\section{Experiments}\label{sec:experiments}

We conduct a handful of (toy but insightful) experiments on a simple two-dimensional strongly convex quadratic problem with a box constraint. This highly controlled setting is deliberately chosen to isolate algorithmic behavior and facilitate direct comparison with theoretical convergence bounds; it should not be interpreted as representative of large-scale or real-world performance. We believe that experiments of this type are essential for any fundamentally new algorithm. Since this is a theoretical paper laying down the foundations of a new algorithmic paradigm, more serious experimentation with large-scale and/or practical problems is left to future work.

In particular, our goal in this section is to compare \algname{Local LMO}, \algname{PGD}, and \algname{FW} against their behavior predicted by theory. We present the main observations here, with full details deferred to \Cref{sec:num_exp}.

We consider the objective $f(x)=\frac{1}{2} x^\top \mQ x$, $x\in\R^2$, where $\mQ\in \R^{2\times 2}$ is symmetric positive definite with eigenvalues $\mu=1$ and $L=100$, so that $f$ is $\mu$--strongly convex and $L$--smooth. We take
\[\tiny
    \mQ = \mR
    \begin{pmatrix}
    1 & 0\\
    0 & 100
    \end{pmatrix}
    \mR^\top,
    \qquad
    \mR=
    \begin{pmatrix}
    \cos(\pi/6) & -\sin(\pi/6)\\
    \sin(\pi/6) & \cos(\pi/6)
    \end{pmatrix},
\]
let $\cX=[2,4]\times[2,4]$, and $x_0=(4,4)\in\cX$. The unconstrained minimizer is $(0,0)\notin\cX$, and the constrained minimizer $x_\star \approx (3.3295734,2)$ lies on the lower edge of the box (see \Cref{fig:local-lmo-2d}).

\paragraph{Comparison with PGD and FW.}

We first compare \algname{Local LMO} against \algname{PGD} and \algname{FW} using standard stepsizes $\nicefrac{1}{L}$ and $\nicefrac{2}{(k+2)}$, respectively.
For \algname{Local LMO}, we use the theoretically prescribed radius \eqref{eq:radius_smooth_strong}, $t_k=\theta \|x_k-x_\star\|$ with $\theta=\nicefrac{2\sqrt{\mu L}}{(L+\mu)}$.
\begin{figure}[t]
    \centering
    \begin{subfigure}[t]{0.40\textwidth}
        \centering
        \includegraphics[width=\textwidth]{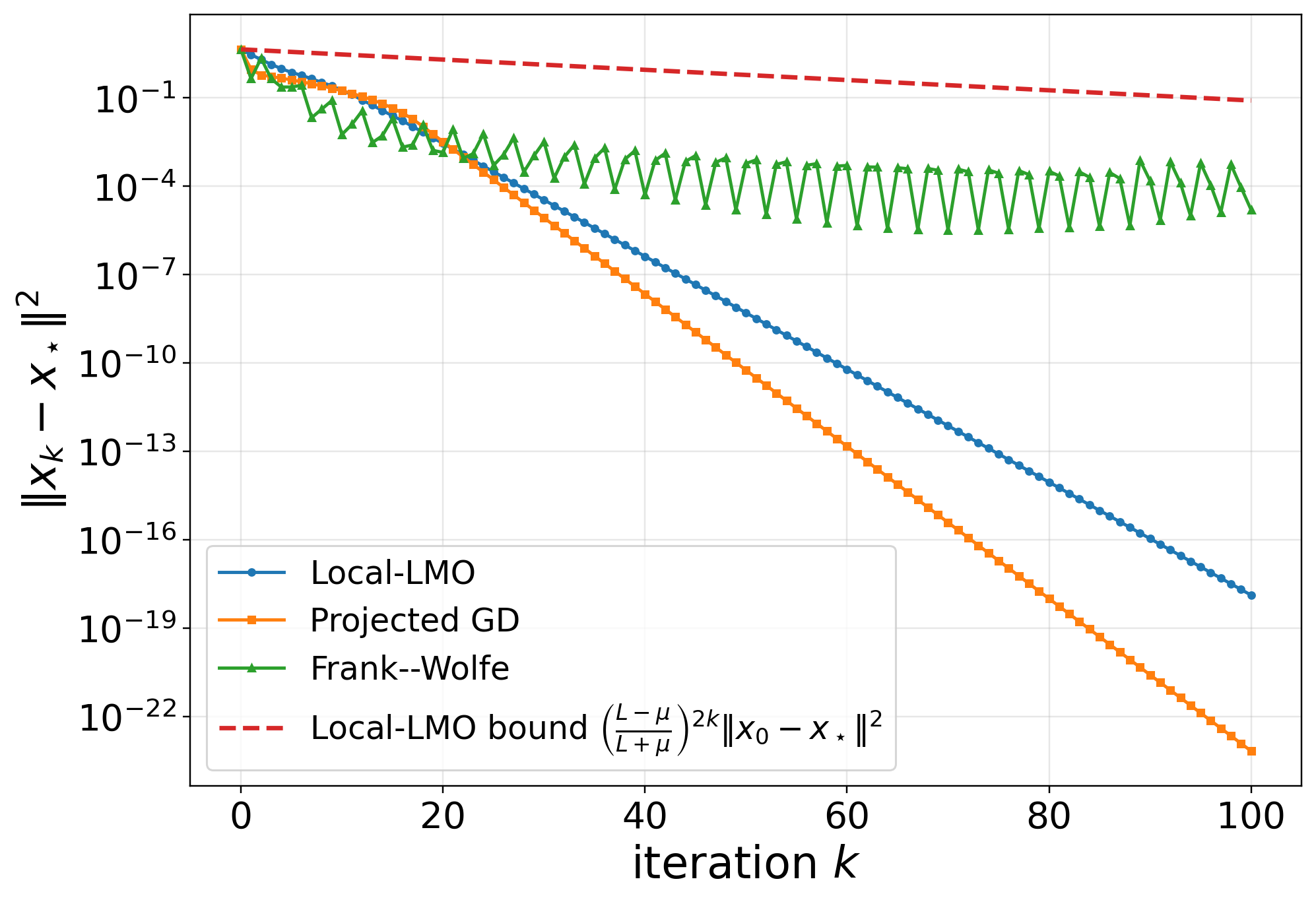}
        \caption{Comparison: \algname{Local LMO}, \algname{PGD},  \algname{FW}.}
        \label{fig:comparison-lmo-pgd-fw}
    \end{subfigure}
    \begin{subfigure}[t]{0.40\textwidth}
        \centering
        \includegraphics[width=\textwidth]{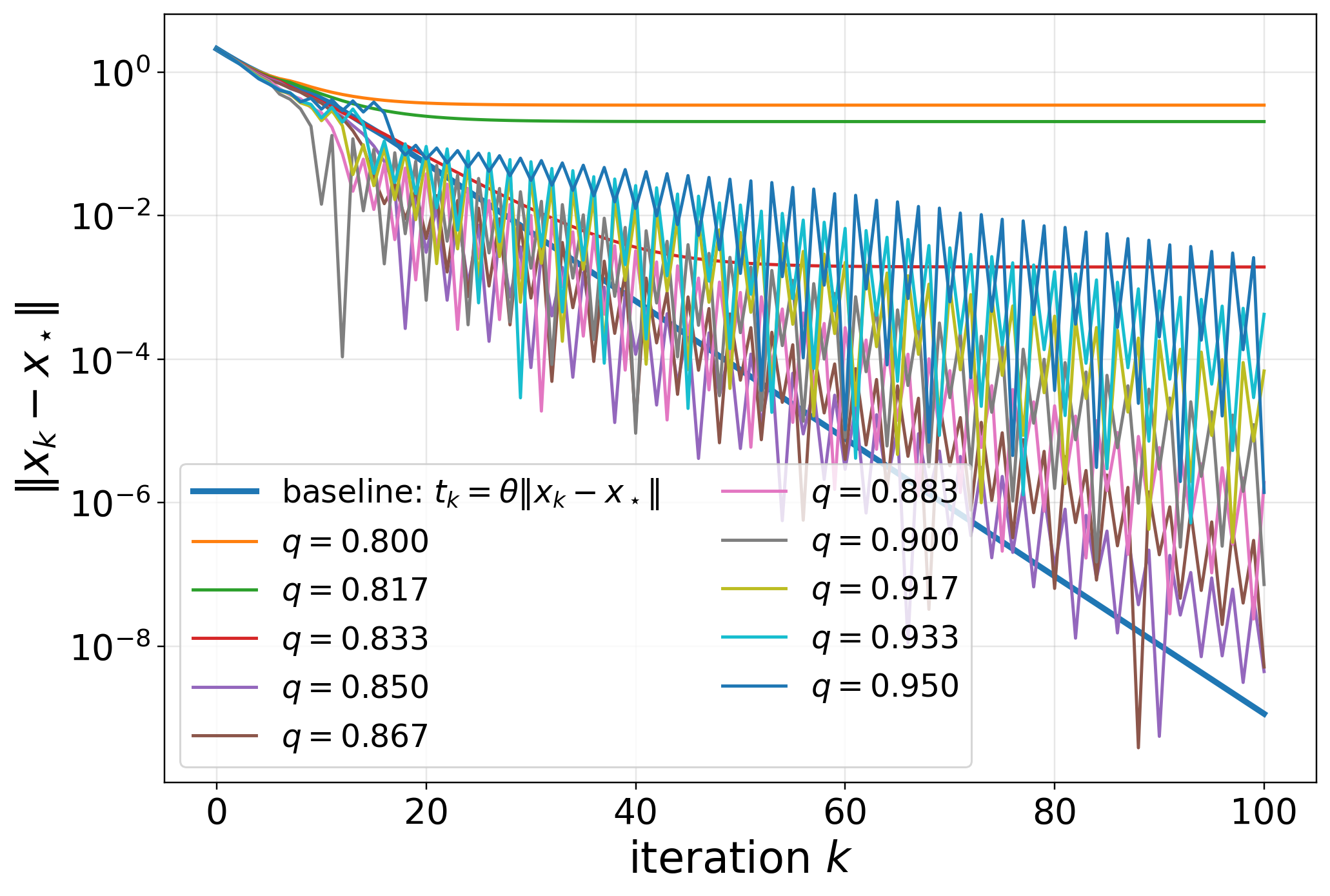}
        \caption{Adaptive vs geometric radius schedules.}
        \label{fig:comparison-geometric-radius}
    \end{subfigure}
    \caption{\small Semi-log plots of the squared distance (left) or distance (right) to the constrained minimizer. Left: convergence behaviour of \algname{Local LMO}, \algname{PGD}, and \algname{FW} over the first $100$ iterations, including the geometric upper bound $\left(\nicefrac{(L-\mu)}{(L+\mu)}\right)^{2k}\|x_0-x_\star\|^2$. Right: comparison of the adaptive radius rule $t_k=\theta\|x_k-x_\star\|$ with geometric radius schedules $t_k=\theta\|x_0-x_\star\|q^k$ for $10$ values of $q\in[0.8,0.95]$.}
    \label{fig:combined}
\end{figure}
\Cref{fig:comparison-lmo-pgd-fw} shows the semi-log plot of $\|x_k-x_\star\|^2$ over the first $100$ iterations. 
\algname{PGD} is fastest, as expected for smooth strongly convex quadratics with cheap Euclidean projection onto $\cX$. \algname{Local LMO} also converges linearly, matching theory,  decaying much faster than the conservative bound in \eqref{eq:linear-improved}. By contrast, \algname{FW} is slower and shows the zig-zagging behavior of projection-free methods. These results show the trade-off: \algname{PGD} is best when projections are cheap, while \algname{FW} avoids projections but converges slowly. \algname{Local LMO} offers a middle ground, replacing global projection with local linear minimization over $\cX\cap\cB(x_k,t_k)$, recovering geometric convergence under strong convexity and smoothness.

\paragraph{Comparison with geometric radius schedules.}

We now compare the theoretically prescribed adaptive radius rule~\eqref{eq:radius_smooth_strong} to geometric schedules of the form
\[
    t_k= c q^k, \qquad c = \theta\norm{x_0 - x_\star}, \qquad q\in(0,1),
\]
which depend only on the initial distance to $x_\star$ and a fixed decay parameter~$q$. The motivation is that the adaptive rule~\eqref{eq:radius_smooth_strong} depends on the unknown current distance to the solution, which itself decays geometrically according to \eqref{eq:linear-improved}. This suggests that a suitably tuned geometric schedule may mimic its behavior in practice.
We test $10$ evenly spaced values of $q$ in $[0.800, 0.950]$ and compare $\norm{x_k-x_\star}$ over $100$ iterations. 
As shown in \Cref{fig:comparison-geometric-radius}, a well-tuned geometric schedule can nearly match the adaptive rule. Thus, geometric schedules are useful when $\norm{x_k-x_\star}$ is unavailable, while the adaptive rule remains the more principled and stable choice.

\section{Conclusions} 
One of the main limitations of our results is the dependence of the theoretical radii (see \Cref{tbl:main}) on generally unknown problem-specific quantities, such as $f(x_\star)$, $\nabla f(x_\star)$, and $\norm{x_k-x_\star}$. While our preliminary experiments with a toy quadratic problem suggest that a heuristic radius rule motivated by our theory may work well in the smooth strongly convex setting, much more further theoretical and empirical work is clearly needed. Further, our \algname{Local LMO} framework may provide new insights into recently proposed LMO-based deep learning optimizers such as \algname{Muon}, \algname{Scion} and \algname{Gluon}. In particular, in \Cref{sec:muon} we show how intersecting spectral constraints with local trust-region balls leads to a damped LMO update that may be useful for neural network training. These insights are also of a preliminary nature, and are merely meant to hint at interesting future work directions. In general, we view this work as the foundational paper for a new class of projection-free gradient-type methods, which means that a lot is yet to be discovered. In fact, the structure of our method hints at deeper and potentially far-reaching insights: we show in Appendix~\ref{sec:fb_brox} that \algname{Local LMO} admits a natural composite-optimization interpretation as a forward--backward scheme for minimizing \(f+\iota_{\cX}\), where $\iota_{\cX}$ is the indicator function of $\cX$, in which the usual proximal backward step \citep{Moreau65} is replaced by a broximal backward step \citep{gruntkowska2025ball}. This perspective further clarifies the connection of our method to projected/proximal gradient methods, while highlighting its hard-constrained trust-region nature.
We invite the community to contribute to further development of this promising new direction. 

\begin{ack}
    This work was supported by funding from King Abdullah University of Science and Technology (KAUST): i) KAUST Baseline Research Scheme, ii) Center of Excellence for Generative AI (award no. 5940). K.G. is supported by the Google PhD Fellowship.
\end{ack}

\bibliographystyle{plainnat}
\bibliography{2026-22-Local-LMO}

\clearpage
\appendix
\part*{Appendix}

\tableofcontents

\newpage

\section{Local LMO as a forward--backward method with a broximal backward step}
\label{sec:fb_brox}

In this section we explain that \algname{Local LMO} can be viewed as a forward--backward scheme\footnote{All results of this paper, including those in this section, can be formulated in the Hilbert space setting. We resort to $\R^d$ for simplicity of exposition only.} for the composite problem
\begin{equation}
\label{eq:fb_composite_98ufd}
    \min_{x\in\R^d} \; F(x) \eqdef f(x) + g(x),
\end{equation}
where
\begin{equation}
\label{eq:g-indicator}
    g(x) \eqdef \iota_{\cX}(x)
    \;\eqdef\;
    \begin{cases}
        0, & x\in \cX,\\
        +\infty, & x\notin \cX.
    \end{cases}
\end{equation}

Problem \eqref{eq:fb_composite_98ufd} is of course equivalent to the constrained problem
\[
    \min_{x\in \cX} f(x).
\]

The point of view developed below is useful for two reasons.
First, it clarifies the relation between \algname{Local LMO} and classical forward--backward / proximal-gradient methods.
Second, it shows that \algname{Local LMO} can be interpreted as a \emph{hard-constrained} analogue of proximal gradient descent, in exactly the same spirit in which the broximal operator introduced by \citep{gruntkowska2025ball} is a hard-constrained analogue of the classical proximity operator \citep{Moreau65}.

\subsection{Classical forward--backward splitting}

Let us first recall the usual forward--backward update for \eqref{eq:fb_composite_98ufd} in the case when \(f\) is differentiable and~\(g\) is proper, closed, and convex. Given a stepsize \(\gamma_k>0\), the method takes the form
\begin{equation}
\label{eq:classical-fb}
    x_{k+1}
    =
    \prox_{\gamma_k g}\left(x_k-\gamma_k \nabla f(x_k)\right).
\end{equation}
Equivalently, \(x_{k+1}\) is the minimizer of the regularized first-order model
\begin{equation}
\label{eq:classical_fb_model_090s}
    x_{k+1}
    \in
    \argmin_{z\in\R^d}
    \left\{
        g(z)
        +
        \inner{\nabla f(x_k)}{z-x_k}
        +
        \frac{1}{2\gamma_k}\norm{z-x_k}^2
    \right\}.
\end{equation}
Thus, the forward--backward method can be interpreted as follows:

\begin{itemize}
    \item the \emph{forward step} is the linearization of \(f\) at \(x_k\),
    \item the \emph{backward step} is the minimization of this linearized model regularized by the quadratic penalty \(\frac{1}{2\gamma_k}\norm{z-x_k}^2\).
\end{itemize}

In the special case \eqref{eq:g-indicator}, we have
\[
    \prox_{\gamma_k g}(y)=\Proj_{\cX}(y),
\]
and hence \eqref{eq:classical-fb} reduces to Projected Gradient Descent:
\[
    x_{k+1}
    =
    \Proj_{\cX}\left(x_k-\gamma_k \nabla f(x_k)\right).
\]

\subsection{From prox to brox}

The broximal operator \citep{gruntkowska2025ball} replaces the quadratic penalty by a hard ball constraint.
Given a proper function \(h:\R^d\to \R\cup\{+\infty\}\), a center \(y\in\R^d\), and a radius \(t>0\), define
\begin{equation}
\label{eq:brox-definition-section}
    \brox_h^t(y)
    \;\eqdef\;
    \argmin_{z\in \cB(y,t)} h(z),
\end{equation}
where
\[
    \cB(y,t)\eqdef \left\{ z\in\R^d \;:\; \norm{z-y}\le t \right\}.
\]
Hence, whereas the proximity operator solves the penalized problem
\[
    \argmin_z \brac{ h(z) + \frac{1}{2\gamma}\norm{z-y}^2 },
\]
the broximal operator solves the hard-constrained counterpart
\[
    \argmin_z \brac{ h(z) \;:\; \norm{z-y}\le t }.
\]

Thus, moving from the proximity to the broximity operator should be thought of 
as replacing a \emph{soft quadratic penalty} by a \emph{hard trust-region constraint}.

\subsection{A forward--backward interpretation of Local LMO}

We now apply this idea to the composite problem \eqref{eq:fb_composite_98ufd}. At iteration \(k\), define the first-order model of \(F=f+g\) around \(x_k\) by
\begin{equation}
\label{eq:model-mk}
    m_k(z)
    \eqdef
    g(z)+\inner{\nabla f(x_k)}{z-x_k}.
\end{equation}
The natural broximal forward--backward step associated with this model is
\begin{equation}
\label{eq:fb-brox-step}
    x_{k+1}\in \brox_{m_k}^{t_k}(x_k).
\end{equation}
Using the definition \eqref{eq:brox-definition-section}, this means
\begin{equation}
\label{eq:fb-brox-step-expanded}
    x_{k+1}
    \in
    \argmin_{z\in \cB(x_k,t_k)}
    \brac{
        g(z)+\inner{\nabla f(x_k)}{z-x_k}
    }.
\end{equation}
Substituting \(g=\iota_{\cX}\), we obtain
\begin{equation}
\label{eq:indicator-expanded}
    x_{k+1}
    \in
    \argmin_{z\in \cB(x_k,t_k)}
    \brac{
        \iota_{\cX}(z)+\inner{\nabla f(x_k)}{z-x_k}
    }.
\end{equation}
Since \(\iota_{\cX}(z)=+\infty\) outside \(\cX\), the feasible set in \eqref{eq:indicator-expanded} collapses to \(\cX\cap \cB(x_k,t_k)\), and hence
\begin{equation}
\label{eq:indicator-restricted}
    x_{k+1}
    \in
    \argmin_{z\in \cX\cap \cB(x_k,t_k)}
    \inner{\nabla f(x_k)}{z-x_k}.
\end{equation}
Finally, since \(-\inner{\nabla f(x_k)}{x_k}\) is constant with respect to \(z\), \eqref{eq:indicator-restricted} is equivalent to
\begin{equation}
\label{eq:local-lmo-from-brox}
    x_{k+1}
    \in
    \argmin_{z\in \cX\cap \cB(x_k,t_k)}
    \inner{\nabla f(x_k)}{z}.
\end{equation}
But \eqref{eq:local-lmo-from-brox} is exactly the \algname{Local LMO} update.

We have therefore proved the following interpretation.

\begin{proposition}[Forward--backward--brox interpretation of \algname{Local LMO}]
\label{prop:fb-brox-interpretation}
Consider the constrained optimization problem
\[
    \min_{x\in \cX} f(x),
\]
and rewrite it in composite form as
\[
    \min_{x\in\R^d} f(x)+\iota_{\cX}(x).
\]
Then the \algname{Local LMO} iteration
\[
    x_{k+1}
    \in
    \argmin_{z\in \cX\cap \cB(x_k,t_k)}
    \inner{\nabla f(x_k)}{z}
\]
is equivalent to the forward--backward step
\[
    x_{k+1}\in \brox_{m_k}^{t_k}(x_k),
    \qquad
    m_k(z)=\iota_{\cX}(z)+\inner{\nabla f(x_k)}{z-x_k}.
\]
In other words, \algname{Local LMO} is obtained from classical forward--backward splitting by replacing the proximal backward step with a broximal backward step.
\end{proposition}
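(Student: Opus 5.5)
The plan is to verify the equivalence directly by unfolding definitions, following the chain of reformulations sketched just before the statement. The argument reduces to showing that the two argmin sets coincide as subsets of $\R^d$.

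\textbf{Step 1: unfold the broximal step.} By \eqref{eq:brox-definition-section} with $h=m_k$, $y=x_k$ and $t=t_k$, we have
\[
\brox_{m_k}^{t_k}(x_k)=\argmin_{z\in\cB(x_k,t_k)}\bigl\{\iota_{\cX}(z)+\inner{\nabla f(x_k)}{z-x_k}\bigr\}.
\]

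\textbf{Step 2: eliminate the indicator.} Since $x_k\in\cX$, the point $z=x_k$ is feasible and gives the objective the finite value $0$. Hence the infimum of the objective over the ball is finite. Every point $z\notin\cX$ has objective value $+\infty$, so it cannot be a minimizer. Consequently, the argmin over $\cB(x_k,t_k)$ equals the argmin over $\cX\cap\cB(x_k,t_k)$ of the finite function $z\mapsto\inner{\nabla f(x_k)}{z-x_k}$. This is exactly \eqref{eq:indicator-restricted}.

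\textbf{Step 3: drop the constant.} The term $-\inner{\nabla f(x_k)}{x_k}$ does not depend on $z$, so removing it leaves the argmin set unchanged. This yields the \algname{Local LMO} update \eqref{eq:local-lmo-from-brox}.

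\textbf{Step 4: nonemptiness.} To make sure we are not comparing empty sets, we invoke the well-posedness result (\Cref{thm:well-posedness}). Since $\cX\cap\cB(x_k,t_k)$ is nonempty and compact, the linear objective attains its minimum there, so both sets are nonempty and the inclusion $x_{k+1}\in\cdot$ is meaningful on both sides.

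\textbf{Main obstacle.} The only delicate point is the handling of extended-valued minimization in Step 2. Restricting to $\cX$ is legitimate only because the infimum is finite, and this relies on the standing requirement that the iterate satisfies $x_k\in\cX$. That requirement holds by induction, because every Local LMO iterate lies in $\cX$. Everything else is routine.
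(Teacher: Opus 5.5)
Your proposal is correct and follows essentially the same chain as the paper: unfold the brox definition, use $\iota_{\cX}=+\infty$ outside $\cX$ to collapse the feasible set to $\cX\cap\cB(x_k,t_k)$, then drop the $z$-independent term $-\inner{\nabla f(x_k)}{x_k}$. Your remarks on extended-valued minimization (using $x_k\in\cX$) and on nonemptiness via \Cref{thm:well-posedness} make explicit what the paper's one-line proof leaves implicit.
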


\begin{proof}
The equivalence follows immediately from \eqref{eq:fb-brox-step}--\eqref{eq:local-lmo-from-brox}.
\end{proof}

\subsection{Two variants of the broximal backward step}

It is important to emphasize that the interpretation above is \emph{not} the same as writing
\[
    x_{k+1} \in \brox_g^{t_k}\left(x_k-\gamma_k\nabla f(x_k)\right).
\]
Indeed, this latter expression would minimize only \(g\) over a ball centered at the forward point \(x_k-\gamma_k\nabla f(x_k)\), and hence would correspond to a direct replacement of \(\prox_{\gamma_k g}(y)\) by  \(    \brox_g^{t_k}(y)\) inside the classical formula \eqref{eq:classical-fb}. This is \emph{not} what \algname{Local LMO} does. Instead, \algname{Local LMO} keeps the center of the ball at the current iterate \(x_k\), linearizes the smooth part \(f\), adds the non-smooth term \(g\), and then applies the broximal operator to this linearized composite model:
\[
    x_{k+1}
    \in
    \brox_{\,g+\inner{\nabla f(x_k)}{\cdot-x_k}}^{t_k}(x_k).
\]
Thus, the correct interpretation is:

\begin{quote}
\algname{Local LMO} is a forward--backward method in which the backward step is performed not on \(g\) itself, but on the \emph{linearized composite model} \(g+\inner{\nabla f(x_k)}{\cdot-x_k}\), using a broximal operator instead of a proximal one.
\end{quote}

\subsection{Comparison with classical PGD}

The distinction between \algname{PGD} and \algname{Local LMO} can now be expressed in one line.

For \(g=\iota_{\cX}\), projected gradient descent solves
\begin{equation}
\label{eq:pgd-model-section}
    x_{k+1}
    \in
    \argmin_{z\in\R^d}
    \brac{
        \iota_{\cX}(z)
        +
        \inner{\nabla f(x_k)}{z-x_k}
        +
        \frac{1}{2\gamma_k}\norm{z-x_k}^2
    }.
\end{equation}
By contrast, \algname{Local LMO} solves
\begin{equation}
\label{eq:lmo-model-section}
    x_{k+1}
    \in
    \argmin_{z\in\R^d}
    \brac{
        \iota_{\cX}(z)
        +
        \inner{\nabla f(x_k)}{z-x_k}
        \;:\;
        \norm{z-x_k}\le t_k
    }.
\end{equation}
Thus:

\begin{itemize}
    \item \algname{PGD} uses a \emph{soft quadratic regularization},
    \item \algname{Local LMO} uses a \emph{hard trust-region constraint}.
\end{itemize}

This is exactly analogous to the relation between the proximity operator and the broximal operator.

\subsection{A monotone inclusion viewpoint}

There is also a useful inclusion interpretation. The optimality condition for \eqref{eq:fb_composite_98ufd} is
\[
    0\in \nabla f(x)+\partial g(x).
\]
Since \(g=\iota_{\cX}\), we have \(\partial g(x)=N_{\cX}(x)\), and therefore the problem is equivalent to the monotone inclusion
\begin{equation}
\label{eq:monotone-inclusion}
    0\in \nabla f(x)+N_{\cX}(x).
\end{equation}

For classical forward--backward, the implicit step reads
\[
    0\in \nabla f(x_k)+\partial g(x_{k+1})+\frac{1}{\gamma_k}(x_{k+1}-x_k).
\]
For \algname{Local LMO}, the first-order optimality condition for \eqref{eq:local-lmo-from-brox} gives
\begin{equation}
\label{eq:lmo-kkt-brox}
    0
    \in
    \nabla f(x_k)+\partial g(x_{k+1})+N_{\cB(x_k,t_k)}(x_{k+1}).
\end{equation}
Since \(g=\iota_{\cX}\), this becomes
\begin{equation}
\label{eq:lmo-kkt-brox-indicator}
    0
    \in
    \nabla f(x_k)+N_{\cX}(x_{k+1})+N_{\cB(x_k,t_k)}(x_{k+1}).
\end{equation}
Hence the Euclidean penalty term
\(
    \frac{1}{\gamma_k}(x_{k+1}-x_k)
\)
from proximal forward--backward is replaced by the normal cone to the trust-region ball,
\(
    N_{\cB(x_k,t_k)}(x_{k+1}).
\)

\newpage

\section{Related work}\label{sec:lit_rev}

\subsection{Constrained optimization in machine learning}\label{app:constrained_ml}

Constrained optimization is a fundamental tool in machine learning, used to encode prior knowledge, enforce structure, and ensure desirable properties of learned models. Rather than relying solely on unconstrained objectives with regularization, many modern applications impose explicit constraints of the form
\begin{equation*}
    \min_{x \in \cX} f(x),
\end{equation*}
where the feasible set $\cX$ captures domain-specific requirements. In this section, we provide a brief and non-exhaustive overview of key applications, focusing on representative examples rather than a complete survey.

Constraints are commonly used to impose structure on model parameters. Classical examples include sparsity constraints for feature selection \citep{Tibshirani1996Regression}, low-rank constraints in matrix completion and representation learning \citep{recht2010guaranteed}, and simplex constraints in probabilistic models such as topic models \citep{Blei2003Latent}. In deep learning, structured constraints are used to enforce sparsity, low-rank structure, or orthogonality in neural networks, improving training stability, generalization, and interpretability \citep{vorontsov2017orthogonality,bansal2018can,grontas2025pinet}.

A prominent example arises in adversarial robustness. In this setting, adversarial examples are generated by maximizing the loss over a norm-bounded perturbation set, typically an $\ell_p$ ball around a data point \citep{Goodfellow2014Explaining, madry2017towards}. The constraint set encodes the allowable perturbations, and robust training then amounts to solving a constrained min--max optimization problem, where the outer minimization seeks model parameters that perform well under worst-case perturbations \citep{madry2017towards}.

In sensitive applications, constraints are used to enforce fairness, safety, and reliability requirements \citep{zafar2017fairness, agarwal2018reductions}. For instance, one may impose constraints on prediction disparities across demographic groups, or enforce monotonicity and boundedness conditions in high-stakes decision systems. These constraints encode requirements that cannot be captured by the loss function alone and are increasingly important in real-world ML deployments.

Many ML models require parameters to lie in structured domains, leading naturally to constrained optimization formulations. A prominent example is the enforcement of orthogonality constraints on weight matrices, which can be formulated as optimization over the Stiefel manifold and has been used to stabilize training and improve generalization in deep networks \citep{AbsMahSep2008, bansal2018can}. 

Another important class of constraints arises from enforcing Lipschitz continuity \citep{gouk2021regularisation}, which is closely tied to robustness and stability. This leads to optimization problems with spectral or norm constraints on network weights, such as bounding operator norms \citep{gouk2020distance, rosca2020case}. 

In many of the settings above, the constraint set $\cX$ is structured but complex: projections may be computationally expensive, while simpler primitives (e.g., linear minimization over $\cX$) can be efficient. As a result, there is strong practical motivation for optimization methods that can exploit such structure without requiring costly projections.
This motivates the study of alternative oracle models for constrained optimization that better align with the computational structure of ML applications. In particular, methods that avoid global projections while retaining strong convergence guarantees are of significant interest.

\subsection{Projected Gradient Descent}

Projection-based first-order methods are among the most classical and widely used approaches for solving constrained optimization problems of the form \eqref{eq:main}. They combine a gradient step with a projection onto the feasible set $\cX$, typically defined via a Bregman divergence $D_\phi$, i.e.,
\begin{align*}
    \Proj_{\cX}^{\phi}(x) \in \argmin_{z \in \cX} D_{\phi}(z, x).
\end{align*}
In the Euclidean setting $\phi(x)=\tfrac{1}{2}\|x\|^2$, this reduces to the standard projection \[\Proj_{\cX}(x)=\arg\min_{z\in\cX}\|x-z\|.\] This leads to the well-known Projected Gradient Descent (\algname{PGD}) iteration
\begin{align*}
    x_{k+1} = \Proj_{\cX}\bigl(x_k - \gamma_k \nabla f(x_k)\bigr),
\end{align*}
where $\gamma_k>0$ is a stepsize, typically chosen as $\gamma_k \equiv \nicefrac{1}{L}$ for $L$--smooth convex objectives.

At a high level, \algname{PGD} performs an unconstrained gradient step followed by a correction that enforces feasibility. This simple structure has made it a foundational method in constrained optimization, with extensive theoretical and algorithmic development; see, e.g., the classical references \citet{Nocedal2006Numerical, Beck2017First, nesterov2018lectures}.

\algname{PGD} enjoys convergence guarantees that match those of unconstrained gradient methods: $\cO(\nicefrac{1}{\varepsilon})$ in the smooth convex case, $\cO(\log(\nicefrac{1}{\varepsilon}))$ in the smooth strongly convex case, $\cO(\nicefrac{1}{\varepsilon^2})$ in the bounded-gradient convex case, and $\cO(\nicefrac{1}{\varepsilon^2})$ for finding an $\varepsilon$-stationary point in the smooth non-convex case, measured by the projected gradient mapping $G_\gamma$. These rates are known to be optimal for unaccelerated first-order methods under the corresponding suboptimality or stationarity criteria.

A key implication is that constraints do not degrade the oracle complexity of the method: \algname{PGD} achieves these guarantees without additional cost beyond projection operations. This makes it one of the few methods that cleanly separates optimization difficulty from feasibility constraints in the oracle complexity sense.

Despite its strong theoretical properties, the practical efficiency of \algname{PGD} critically depends on the cost of computing the projection step. For simple constraint sets--such as norm balls, boxes, simplices, or other polyhedral structures--projections admit closed forms or can be computed efficiently \citep{Duchi2008efficient, yu2012efficient, wang2013projection, rutkowski2017closed}.
However, for many modern applications, the projection itself requires solving a nontrivial optimization problem over $\cX$. In such cases, each \algname{PGD} iteration may be as expensive as solving a full auxiliary problem, making the projection step the computational bottleneck. This limitation is particularly pronounced when $\cX$ is high-dimensional.

\subsection{Frank--Wolfe}\label{sec:fw_rev}

The main limitation of \algname{PGD} is not its convergence rate, but its reliance on efficient projections. While its oracle complexity is optimal, its per-iteration cost can be prohibitive. This has motivated a broad line of work on projection-free methods, which replace projections with different oracles. These methods trade off dependence on the geometry of $\cX$ for cheaper iterations, but typically at the cost of weaker convergence guarantees.

The Frank--Wolfe (\algname{FW}) method \citep{FrankWolfe1956, levitin1966constrained}, also known as the conditional gradient method \citep{braun2022conditional}, is a canonical example of such a projection-free first-order algorithm for constrained convex optimization. Unlike projected gradient methods, which require computing projections onto the feasible set $\cX$, \algname{FW} replaces this step with a single call per iteration to a Linear Minimization Oracle (LMO). Specifically, at iteration $k$, it computes
\begin{align*}
s_k \in \lmo{\cX}{\nabla f(x_k)} \eqdef \argmin_{z\in \cX}\inp{\nabla f(x_k)}{z}
\end{align*}
and updates
\begin{align*}
x_{k+1} = x_k + \gamma_k (s_k - x_k),
\end{align*}
where $\gamma_k \in [0,1]$ is a stepsize.
The LMO solves a linear problem over $\cX$, which is often significantly cheaper than projection. As a result, \algname{FW} is particularly well suited for large-scale problems where projections can be computationally prohibitive.

Due to this favorable structure, \algname{FW} has found numerous applications across machine learning and optimization, including large-scale learning problems \citep{lacoste2013block, negiar2020stochastic}, pruning large language models \citep{roux2025don}, matrix completion \citep{garber2016faster}, optimal transport \citep{luise2019sinkhorn}, optimal experiment design \citep{hendrych2023solving}, image processing \citep{joulin2014efficient}, submodular function maximization \citep{feldman2011unified, bach2019submodular}, and quantum physics \citep{designolle2023improved}.

From a theoretical perspective, \algname{FW} is known to achieve a sublinear convergence rate in general. For smooth and convex objectives, it requires at least $\Omega(\nicefrac{1}{\varepsilon})$ gradient and LMO calls to compute an $\varepsilon$-optimal solution \citep{Jaggi2013, lan2013complexity}. The smoothness assumption is often expressed through the \algname{FW} curvature constant $C$ \citep{Jaggi2013}, defined as
\begin{align}\label{eq:curvature}
    C \eqdef \sup_{\substack{x, s \in \cX, t\in[0,1], \\ y = x + t(s-x)}} \frac{2}{t^2} \parens{f(y) - f(x) - \inp{\nabla f(x)}{y-x}},
\end{align}
rather than a global Lipschitz constant for the gradient. 
The same projection-free structure also extends to smooth non-convex objectives, but with a stationarity rather than primal-gap guarantee: over a compact convex set $\cX$, the minimum \algname{FW} gap satisfies $\bar{g}_K=\cO\parens{\nicefrac{1}{\sqrt{K}}}$ \citep{lacoste2016convergence}.

More recently, \citet{vyguzov2025frank} studied \algname{FW} under the $(L_0,L_1)$--smoothness condition and proposed an $(L_0,L_1)$-aware short-step variant. For compact convex feasible sets, they proved an $\mathcal{O}(1/K)$ primal-gap guarantee in the general convex case, together with faster rates under additional assumptions. This extends \algname{FW} analysis beyond the classical globally smooth setting. However, the result relies on a modified short-step rule adapted to the $(L_0,L_1)$ geometry, rather than the standard curvature-based framework.

Faster convergence can only be achieved under additional structural assumptions. In particular, when the constraint set $\cX$ is strongly convex, linear convergence is possible if the solution lies in the interior of $\cX$ and the objective is strongly convex, or if the unconstrained minimizer lies outside $\cX$ \citep{levitin1966constrained, wolfe1970integer}. These results highlight that the location of the optimizer plays a crucial role in the behavior of \algname{FW}.
It remained unclear whether the strong convexity of $\cX$ alone can yield faster convergence rates without additional assumptions on the optimizer's location. This question was partially resolved by \citet{GarberHazan2015}, who showed that when both the objective function and the feasible set are strongly convex, \algname{FW} achieves a rate of $\cO(\nicefrac{1}{\sqrt{\varepsilon}})$.

Recently, \citet{halbey2026lower} proved that this rate is in fact optimal: for smooth and strongly convex objectives over smooth and strongly convex sets, \algname{FW} with exact line search or short-step variants requires at least $\Omega(\nicefrac{1}{\sqrt{\varepsilon}})$ iterations. This result establishes that the uniform convergence rate of $\Omega(\nicefrac{1}{\sqrt{\varepsilon}})$ is tight with respect to $\varepsilon$, and further shows that the dependence on the optimizer's location is intrinsic rather than an artifact of earlier analyses. Moreover, since the lower bound applies to smooth sets, it rules out any acceleration stemming from the smoothness of $\cX$ in this setting.
A concurrent work by \citet{grimmer2026lower} derived a similar lower bound of $\Omega(\nicefrac{1}{\sqrt{\varepsilon}})$ for a broader class of LMO-based methods using a zero-chain construction. Their result covers a broader algorithm class, but only applies in the high-dimensional regime and its extension to smooth constraint sets is limited to more restrictive settings where the smoothness parameter scales with the target accuracy.

Beyond strong convexity, other structural assumptions can also lead to improved rates. For example, when $\cX$ is a polytope, several \algname{FW} variants, such as away-step, pairwise, and fully corrective methods, achieve faster convergence rates under suitable conditions. We refer to \citet{braun2022conditional} for a comprehensive overview of these variants and their theoretical guarantees.

For a modern perspective on lower bounds and their historical development, see \citet{halbey2026lower, grimmer2026lower}.

\paragraph{Frank--Wolfe curvature.}
We provide a simple example showing that bounded gradients on $\cX$ do not guarantee finite Frank--Wolfe curvature.

\begin{example}[Bounded gradients do not imply finite Frank--Wolfe curvature]\label{ex:fw-curvature-infinite}
    Consider the convex function $f:\cX\to\R$ defined over $\cX = [0,1]$ by
    \begin{align*}
        f(x) = x^{3/2}.
    \end{align*}
    Then $f$ is differentiable on $\cX$ with gradient $\nabla f(x) = \nicefrac{3 \sqrt{x}}{2}$. Hence, for all $x\in\cX$,
    \begin{align*}
        \|\nabla f(x)\| \le \frac{3}{2},
    \end{align*}
    so the bounded gradient condition \eqref{eq:grad-bounded} holds with $G = \nicefrac{3}{2}$.
    
    We now show that the Frank--Wolfe curvature constant \eqref{eq:curvature} is infinite. Fix $x=0$ and $s=1$, and let $y = x + t(s-x) = t$ for $t\in[0,1]$. Since the right derivative $\nabla f(0) = 0$, we have
    \begin{align*}
        f(y) - f(x) - \inp{\nabla f(x)}{y-x}
        = t^{3/2}.
    \end{align*}
    Substituting into \eqref{eq:curvature}, we obtain
    \begin{align*}
        \frac{2}{t^2} \parens{f(y) - f(x) - \inp{\nabla f(x)}{y-x}}
        = 2 t^{-1/2}.
    \end{align*}
    Taking $t \to 0$ yields divergence, and therefore $C = \infty$, showing that bounded gradients alone do not control the second-order behavior captured by the Frank--Wolfe curvature constant.
\end{example}

\subsection{Interior-point methods}\label{sec:interior_pt}

When additional structure of $\cX$ is available, one can exploit it to design more specialized algorithms. A common setting is when $\cX$ admits an explicit representation of the form \[\cX = \{x\in \R^d \;:\; \psi_i(x) \leq 0, \; i=1,\dots,m, Ax=b\}.\]
Interior-point methods approach such problems by solving a sequence of approximations to the original constrained problem, typically using Newton-type methods.

A canonical example is the barrier method \citep{nesterov1994interior, boyd2004convex}. The key idea is to replace the inequality constraints with a penalty term in the objective, yielding a problem to which Newton's method can be applied.
To motivate this, consider rewriting the problem by incorporating the constraints via indicator functions
\begin{align*}
    I_-(z) =
    \begin{cases}
        0 & z \leq 0, \\
        \infty & z > 0.
    \end{cases}
\end{align*}
This leads to the equivalent formulation
\begin{align*}
    \min_{x: Ax=b} \brac{f(x) + \sum_{i=1}^m I_-\parens{\psi_i(x)}}.
\end{align*}
While this removes the explicit constraints, the resulting objective is not differentiable, preventing the direct application of Newton’s method.
The barrier method replaces $I_-$ with the approximation
\begin{align*}
    \hat{I}_-(z) = \begin{cases}
        -\frac{1}{t} \log(-z) & z \leq 0, \\
        \infty & z > 0,
    \end{cases}
\end{align*}
where $t > 0$ controls the accuracy of the approximation. The function $\hat{I}_-$ is convex, nondecreasing, differentiable and closed, and becomes a better approximation of $I_-$ as $t \to \infty$.
This leads to the \emph{logarithmic barrier function}
\[\phi(x) \eqdef \sum_{i=1}^m - \log (-\psi_i(x)),\]
and the associated problem
\[\min_{x: Ax=b} \brac{t f(x) + \phi(x)}. \]
Barrier methods approach \eqref{eq:main} by approximately solving a sequence of such problems for an increasing sequence of scalars $t$, typically using second-order methods. This requires access to the first- and second-order derivatives of the functions $\psi_i$ representing the constraints.

Barrier methods are only one class within interior-point methods. Another important class is primal–dual interior-point methods \citep{wright1997primal}, which update primal and dual variables by solving perturbed KKT systems. For a comprehensive overview, see \citet{boyd2004convex}.

\subsection{Splitting and primal-dual approaches}
\label{sec:splitting_rev}

Another closely related line of work studies projection-free methods for problems whose feasible region is described by several simpler constraints.
In our case, even when linear minimization over $\cX$ and over the ball $\cB(x_k,t_k)$ are individually tractable, linear minimization over their intersection can be substantially harder. 
Splitting methods address this difficulty by decomposing a complicated feasible structure into simpler components.

In the Frank--Wolfe setting, this idea has been developed through augmented-Lagrangian and primal-dual formulations. 
For example, \citet{gidel2018frank} consider problems in which the feasible region is represented through multiple convex sets coupled by a linear constraint.
Instead of requiring a single LMO over the full intersection, their method calls LMOs over the separate component sets and enforces consistency through an augmented Lagrangian mechanism. 
Related primal-dual conditional-gradient methods, such as those of \citet{yurtsever2019conditional} and \citet{silveti2020generalized}, similarly combine LMO-based primal updates with multiplier or proximal mechanisms for handling affine and composite constraints.

These methods are different from the \algname{Local LMO} approach studied here. 
Splitting and primal-dual approaches avoid a difficult joint oracle and recover feasibility through consistency enforcing mechanisms, whereas our \algname{Local LMO} keeps the intersection $\cX\cap \cB(x_k,t_k)$ inside the oracle and therefore enforces local feasibility directly.

\subsection{Penalty approaches}
\label{sec:penalty_rev}

Penalty methods provide a related but conceptually distinct comparison point. 
Rather than decomposing the feasible region into simpler oracle calls, penalty methods replace hard constraints by additional objective terms that measure constraint violation \citep{Nocedal2006Numerical,bertsekas1999nonlinear}. 
For example, for equality constraints $h(x)=0$ and inequality constraints $g_i(x)\le 0$, a quadratic penalty formulation considers a sequence of problems of the form
\begin{align*}
    \min_{x \in \R^d} \left\{  f(x) + \frac{\rho}{2}\norm{h(x)}^2 + \frac{\rho}{2}\sum_i \parens{\max\{0,g_i(x)\}}^2 \right\},
\end{align*}
where $\rho>0$ is a penalty parameter. 
As $\rho$ increases, infeasible points become increasingly expensive, and solutions of the penalized problems are driven toward the feasible region. 
Exact penalty methods instead use non-smooth penalties, such as $\ell_1$-type constraint violation terms, and can recover constrained solutions for sufficiently large finite penalty parameters under suitable assumptions.

From the perspective of \algname{Local LMO} methods, a penalty based analogue would replace the exact local feasible subproblem by a softened model. Instead of solving
\begin{align*}
    \min_{z\in \cX\cap \cB(x_k,t_k)} \inp{\nabla f(x_k)}{z},
\end{align*}
one could consider a penalized surrogate of the form
\begin{align*}
    \min_{z \in \R^d} \left\{  \inp{\nabla f(x_k)}{z} + \rho \dist^2(z,\cX) + \rho \rbrac{\max\{0,\norm{z-x_k}-t_k\}}^2 \right\}.
\end{align*}
Such a surrogate may be easier to optimize or decompose, but it does not enforce feasibility exactly for finite $\rho$. 
Thus, while penalty methods control feasibility through violation terms, \algname{Local LMO} treats both $z\in\cX$ and $z\in\cB(x_k,t_k)$ as hard constraints of the linearized subproblem.

\subsection{Local linear optimization oracle}\label{sec:lloo}

\citet{garber2016linearly} propose a conditional gradient method closely related to our \algname{Local LMO} for smooth and strongly convex optimization over a polyhedral set $\cP$, achieving linear convergence.

The main algorithmic ingredient of the algorithm is the \emph{local linear optimization oracle} (\algname{LLOO}). For $x \in \cP$, $t>0$, and $g \in \R^d$, an oracle $\cA(x,t,g)$ is an \algname{LLOO} for the polytope $\cP$ with parameter $\rho \geq 1$ if it returns a feasible point $p \in \cP$ such that
\begin{itemize}
    \item[(i)] $\forall y \in \cB(x,t) \cap \cP$ it holds that $\inp{g}{y} \geq \inp{g}{p}$,
    \item[(ii)] $\norm{x-p} \leq \rho t$.
\end{itemize}
Thus, the LLOO relaxes our local subproblem
\begin{align}\label{eq:llmo}
    \squeeze \lmo{\cX \cap \cB(x,t)}{g} \eqdef \argmin\limits_{z\in \cX\cap \cB(x,t)}\inp{g}{z}
\end{align}
by solving a linear problem over a larger set.

One of the main contributions of \citet{garber2016linearly} is to show that such an oracle can be constructed with $\rho$ depending only on the dimension $d$ and a polytope-dependent parameter $\mu(\cP)$, using a single call to the standard LMO~\eqref{eq:lmo} (that in this case returns a vertex of $\cP$ minimizing a given linear objective). Hence, the per-iteration oracle complexity matches that of classical \algname{FW}.
Their construction uses one LMO call together with a convex decomposition of the input $x$: the atoms are sorted by their inner product with $g$, and mass is shifted, subject to the ball constraint, from atoms with largest values to the atom minimizing $\inp{g}{\cdot}$.

They establish that, for $\rho = d \mu(\cP)$, the iterates of their \algname{LLOO}-based algorithm applied to problem~\eqref{eq:main}m with $f$ being $L$--smooth and $\mu$--strongly convex over~$\cX=\cP$, satisfy
\begin{align*}
    f(x_K) - f(x_\star) = \cO\parens{\exp\parens{- \frac{\mu K}{4 L \rho^2}}}.
\end{align*}

\subsection{Comparison with Local LMO}

The line of work on local linear optimization oracles is the closest prior literature to our \algname{Local LMO}. Indeed, \citet{garber2016linearly} explicitly proposed using a local oracle that, given a convex and compact set~$\cX$, radius~$t$, and linear objective, returns a feasible point whose linear objective value is at least as good as that of every point in $\cX\cap \cB(x,t)$, while remaining within a controlled multiple of the radius. Their motivation was precisely that a localized linear oracle can yield much faster projection-free methods than vanilla \algname{FW}, including linear convergence over polytopes \citep{garber2016linearly}. Thus, at the level of the oracle model, the idea of replacing projections by a \emph{local} linear minimization problem is not new. However, we arrived at \algname{Local LMO} independently and became aware of \algname{LLOO} only during the literature review.

That said, our formulation differs conceptually from the usual \algname{LLOO} presentation. The \algname{LLOO} literature typically frames the method as an improved \algname{FW} variant and analyzes it via dual gaps, objective decrease, and polyhedral geometry \citep{garber2016linearly, lacoste2015global}. By contrast, our method is cast as a \emph{brox-style trust-region linearization scheme} \citep{gruntkowska2025ball, gruntkowska2025non}. Each step solves the linearized problem over the exact set $\cX\cap \cB(x_{k},t_k)$, and the analysis is driven by a Fej\'er-type geometric decrease of the form
\begin{align}\label{eq:wdojch}
\|x_{k+1}-x_{\star}\|^2 \le \|x_{k}-x_{\star}\|^2 - t_k^2.
\end{align}
This viewpoint makes the connection to \algname{PGD} especially transparent: our method may be interpreted as a projection-free \emph{local} alternative to \algname{PGD}, rather than a variant of \algname{FW}.
Compared to standard global \algname{FW} oracle~\eqref{eq:lmo}, which is often relatively cheap for structured feasible regions \citep{Jaggi2013}, our method relies on a stronger oracle, since solving \eqref{eq:llmo} can be substantially harder (though in some cases it may be simpler). This oracle-strength issue is emphasized explicitly in the LLOO literature: \citet{garber2016linearly} note that exact optimization over $\cX\cap \cB(x,t)$ may be significantly more difficult than over $\cX$. This likely explains why local linear oracle methods, despite their conceptual importance, have not become standard textbook baselines in the same way as \algname{PGD} or vanilla \algname{FW}.

As discussed above, our formulation differs conceptually from the \algname{LLOO} perspective. Rather than relying on objective decrease and polyhedral geometry, we view the method as a \emph{prox-style trust-region linearization scheme} \citep{gruntkowska2025ball, gruntkowska2025non}, applicable to arbitrary nonempty, closed, and convex sets $\cX$. Each step solves the linearized problem over the exact set $\cX\cap \cB(x_{k},t_k)$, and the analysis is based on a Fej\'er-type decrease \eqref{eq:wdojch}, which yields
\begin{align*}
    \|x_{k}-x_{\star}\|^2 \le \left(\frac{L-\mu}{L+\mu}\right)^{2k} \|x_{0}-x_{\star}\|^2
\end{align*}
for radii $t_k = \frac{2\sqrt{\mu L}}{L+\mu} \|x_k-x_\star\|$ (see \Cref{sec:smooth-strong}).

\subsection{Modern motivation: LMO methods in deep learning}\label{sec:muon}

Although the projection-versus-LMO dichotomy has been known for decades, recent developments in large-scale machine learning have renewed interest in LMO-based optimization. Several modern optimizers for deep learning explicitly rely on LMOs, including the \algname{Muon} optimizer \citep{jordan2024muon} and related methods such as \algname{Scion} \citep{pethick2025training} and \algname{Gluon} \citep{riabinin2025gluon}. 
\algname{Scion} is closely related to \algname{FW} through its use of LMO over a norm ball. 
In the constrained variant, the update takes the form
\begin{align*}
    x_{k+1} = (1-\gamma_k)x_k+\gamma_k \lmo{\cX}{d_k},
\end{align*}
which coincides with a stochastic or momentum version of \algname{FW} over the feasible set $\cX$. In particular, when $d_k=\nabla f(x_k)$, this reduces exactly to one step of the classical \algname{FW} update over $\cX$. 
These developments have sparked a resurgence of interest in \algname{FW}-type and projection-free methods more broadly. This renewed practical relevance motivates revisiting the theoretical foundations of LMO-based methods and addressing the classical limitations of the \algname{FW} framework.

\newpage

\section{Auxiliary results}

\subsection{Well-posedness of the algorithm}

We now establish well-posedness of the \algname{Local LMO} iteration.

\begin{algorithm}[t]
\begin{algorithmic}[1]
\STATE \textbf{Input:} starting point $x_0\in \cX$
\FOR{$k=0,1,2,\ldots$}
    \STATE Compute the gradient
    $
        g_k \eqdef \nabla f(x_k)
    $
    \STATE Choose a radius $t_k>0$
    \STATE Compute the next iterate as the solution of the local linear minimization problem
    \[
        x_{k+1}\in \argmin_{z\in \cX\cap \cB(x_k,t_k)} \langle g_k,z\rangle
    \]
\ENDFOR
\end{algorithmic}
\caption{Local LMO}
\label{alg:new}
\end{algorithm}

\begin{restatable}[Well-posedness]{theorem}{WELLPOSED}\label{thm:well-posedness}
    Let \Cref{ass:main} hold. For every $x_k\in \cX$ and every $t_k\ge 0$, the set
    \[
        \cX\cap \cB(x_k,t_k)
    \]
    is nonempty, closed, and bounded. Consequently, the optimization problem
    \[
        \min_{z\in \cX\cap \cB(x_k,t_k)} \langle \nabla f(x_k),z\rangle
    \]
    admits at least one solution.
\end{restatable}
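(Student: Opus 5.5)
The plan is to verify the three topological properties directly and then apply the Weierstrass extreme value theorem.

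\emph{Nonemptiness.} Since $x_k\in\cX$ and $\|x_k-x_k\|=0\le t_k$, we have $x_k\in\cX\cap\cB(x_k,t_k)$. This covers the degenerate case $t_k=0$ as well; there the set is exactly $\{x_k\}$.

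\emph{Closedness.} By \Cref{ass:main}(i), $\cX$ is closed. The ball $\cB(x_k,t_k)$ is the preimage of the closed set $[0,t_k]$ under the continuous map $y\mapsto\|y-x_k\|$, so it is closed too. An intersection of two closed sets is closed.

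\emph{Boundedness.} The intersection is contained in $\cB(x_k,t_k)$, which is bounded.

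\emph{Existence of a minimizer.} By the Heine--Borel theorem in $\R^d$, the set $\cX\cap\cB(x_k,t_k)$ is nonempty and compact. The objective $z\mapsto\langle\nabla f(x_k),z\rangle$ is linear, hence continuous; note that $\nabla f(x_k)$ is well defined by \Cref{ass:main}(ii). Weierstrass's theorem then gives that the minimum over this compact set is attained, so the argmin is nonempty.

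There is no real obstacle here; the only point needing care is using $x_k\in\cX$ to secure nonemptiness, including when $t_k=0$. Convexity of $\cX$ is not needed, although it additionally makes the feasible set, and hence the solution set, convex.
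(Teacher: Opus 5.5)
Your proof is correct and follows the paper's argument essentially verbatim. Nonemptiness comes from $x_k\in\cX\cap\cB(x_k,t_k)$, closedness from intersecting two closed sets, boundedness from containment in the ball, and existence of a minimizer from continuity of the linear objective on a nonempty compact set; your extra remarks on the case $t_k=0$, Heine--Borel, and convexity of $\cX$ being unnecessary are accurate refinements of the same route.
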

\begin{proof}
    Since $x_k\in \cX$ and $x_k\in \cB(x_k,t_k)$, the feasible set is nonempty. It is closed as the intersection of two closed sets, and bounded because it is contained in the ball $\cB(x_k,t_k)$. Since the objective is continuous and the feasible set is nonempty and compact, a minimizer exists.
\end{proof}

\subsection{Examples of feasible sets admitting a closed-form Local LMO} 
\label{sec:closed_form}

In this section, we list several examples of convex sets $\cX\subseteq \R^d$ for which the local linear minimization problem
\begin{equation}
    \label{eq:local-lmo}
    \min_{z\in \cX\cap \cB(x,t)} \langle g,z\rangle
\end{equation}
admits a closed-form solution, assuming throughout that $x\in \cX$, $t>0$, and $g\neq 0$. 
Let 
$
u \eqdef \nicefrac{g}{\|g\|}.
$
Geometrically, problem \eqref{eq:local-lmo} asks for the feasible point within distance at most $t$ from $x$ whose displacement from $x$ has the largest component in the direction $-u$.
The most transparent families are affine sets, one-dimensional convex sets, Euclidean balls, and orthogonal products of these. 
These examples illustrate that the local linear minimization oracle required by \Cref{eq:method} is not merely an abstract object, but can be evaluated explicitly in a number of relevant cases.

\paragraph{1. Whole space.}
Let $\cX=\R^d$.
Then the unique solution is simply the point on the boundary of the ball in direction $-u$:
\[
    z^\star = x-tu.
\]

\paragraph{2. Singleton.}
Let $\cX=\{c\}$. 
Since $x\in \cX$, we necessarily have $x=c$, and therefore 
$
z^\star=c.
$

\paragraph{3. Affine subspace.}
Let $\cX=a+\cV$, where $\cV \subseteq \R^d$ is a linear subspace, and let $\Proj_{\cV}$ denote the orthogonal projector onto $\cV$. 
Since feasible displacements from $x$ must lie in $\cV$, the problem reduces to minimizing the linear form over the Euclidean ball in $\cV$.
Hence
\[
    z^\star= \begin{cases}
        x - t\dfrac{\Proj_{\cV} g}{\|\Proj_{\cV}g\|}, & \Proj_{\cV} g\neq 0,\\[3mm]
        x, & \Proj_{\cV} g=0.
    \end{cases}
\]
(see \Cref{sec:GD-subspace}).

\paragraph{4. Hyperplane.}
Let $\cX=\{z\in \R^d:\ a^\top z=b\}$, $a\neq 0$.
This is a special case of an affine subspace. 
If we define the component of the gradient tangent to the hyperplane by
$
    g_\top \eqdef g-\frac{a^\top g}{\|a\|^2}a,
$
then
\[
    z^\star= \begin{cases}
        x-t\,\dfrac{g_\top}{\|g_\top\|}, & g_\top\neq 0,\\[3mm]
        x, & g_\top=0.
    \end{cases}
\]

\paragraph{5. Affine line.}
Let $ \cX = a + \operatorname{span}\{v\}$, $v\neq 0$.
Denote $\hat v\eqdef v/\|v\|$. Since feasible displacements are one-dimensional, the solution is obtained by moving to one endpoint of the feasible segment:
\[
    z^\star= \begin{cases}
        x-t\,\hat v, & \langle g,\hat v\rangle>0,\\[1mm]
        x+t\,\hat v, & \langle g,\hat v\rangle<0,\\[1mm]
        \text{any point in }\cX\cap \cB(x,t), & \langle g,\hat v\rangle=0.
    \end{cases}
\]

\paragraph{6. Ray.}
Let $\cX=a+\{\alpha v:\ \alpha\ge 0\}$, $\|v\|=1$.
Write $x=a+\alpha_x v$, $\alpha_x\ge 0$.
Then feasible points in $\cX\cap \cB(x,t)$ are of the form $a+\alpha v$, where $\alpha\in \left[\max\{0,\alpha_x-t\},\,\alpha_x+t\right]$.
Hence
\[
    z^\star=a+\alpha^\star v,
\]
where
\[
    \alpha^\star= \begin{cases}
        \max\{0,\alpha_x-t\}, & \langle g,v\rangle>0,\\[1mm]
        \alpha_x+t, & \langle g,v\rangle<0,\\[1mm]
        \text{any feasible }\alpha, & \langle g,v\rangle=0.
    \end{cases}
\]

\paragraph{7. Line segment.}
Let $\cX=[a,b]\eqdef \{(1-\lambda)a+\lambda b:\ \lambda\in[0,1]\}$, suppose $x=(1-\lambda_x)a+\lambda_x b$.

Then feasible points are parameterized by
$
    \lambda\in [0,1]\cap \left[\lambda_x-\frac{t}{\|b-a\|},\,\lambda_x+\frac{t}{\|b-a\|}\right].
$
Thus
\[
    z^\star=(1-\lambda^\star)a+\lambda^\star b,
\]
where $\lambda^\star$ is the left or right endpoint of this interval depending on the sign of $\langle g, b-a\rangle$:
\[
    \lambda^\star= \begin{cases}
        \text{left endpoint}, & \langle g,b-a\rangle>0,\\[1mm]
        \text{right endpoint}, & \langle g,b-a\rangle<0,\\[1mm]
        \text{any feasible }\lambda, & \langle g,b-a\rangle=0.
    \end{cases}
\]

\paragraph{8. Euclidean ball.}
Let $\cX=\cB(c,R)\eqdef \{z\in\R^d:\ \|z-c\|\le R\}$.
The local problem is
\[
    \min_{z\in \cB(c,R)\cap \cB(x,t)} \langle g,z\rangle, \qquad u\eqdef \frac{g}{\|g\|}.
\]
There are three cases:
\begin{itemize}
    \item If the minimizer over \(\cB(x,t)\) remains in \(\cX\), namely if $\|x- tu -c\|\le R$,
    then
    \[z^\star=x-tu.\]

    \item If \(x-tu\notin \cX\), but the minimizer over \(\cX\) lies in \(\cB(x,t)\), namely if $\|c-Ru-x\|\le t$, then \[z^\star=c-Ru.\]

    \item In the remaining case, neither of the two single-ball minimizers is feasible for the other constraint. Hence, both constraints are active at the optimum, and \(z^\star\) lies on the intersection $\|z-x\|=t$ and $\|z-c\|=R$.
    In this case $x\neq c$, so the following quantities are well defined:
    \[d\eqdef c-x, \quad \rho\eqdef \|d\|, \quad e_1\eqdef \frac{d}{\rho}, \quad \alpha\eqdef \frac{\rho^2+t^2-R^2}{2\rho}, \quad s\eqdef \sqrt{t^2-\alpha^2}.\]
    Let $p\eqdef u-(u^\top e_1)e_1$ be the component of $u$ orthogonal to $e_1$. If $p\neq 0$, then
    \[z^\star=x+\alpha e_1-s\,\frac{p}{\|p\|}.\]
    If $p=0$, then every point in the sphere intersection is optimal. In the degenerate case $s=0$, this point is unique.
\end{itemize}

\paragraph{9. Slab between two parallel hyperplanes.}
Let $\cX=\{z\in\R^d:\ \ell\le a^\top z\le r\}, a\neq 0$.
If the unconstrained minimizer $x-tu$ lies in the slab, then
\[
    z^\star=x-tu.
\]
Otherwise, at least one of the two slab boundaries is active at the solution. 
In that case, the solution can be constructed explicitly in two steps: first move from $x$ along the direction $-u$ until the active boundary hyperplane is reached, and then use the remaining radius budget along the tangential component of \(-u\) within that hyperplane. Since the second step reduces exactly to the hyperplane case treated above, this yields an explicit closed-form expression for \(z^\star\).

\subsection{Generic distance decomposition}

The following simple lemma plays an important role in the convergence proof.

\begin{lemma}[Generic decomposition]
    \label{lem:generic-decomposition}
    For any three points $x_{+}, x, z\in\R^d$,
    \[
        \|x_{+} - z\|^2 = \| x - z\|^2 - 2\langle x - x_{+}, x_{+} - z\rangle -\|x - x_{+}\|^2.
    \]
    In particular,
    \begin{equation}
        \label{eq:gen_decomp}
        \|x_{k+1}-x_{\star}\|^2 = \|x_k - x_{\star}\|^2 - 2\langle x_k-x_{k+1},\,x_{k+1}-x_{\star}\rangle - \|x_k-x_{k+1}\|^2.
    \end{equation}
\end{lemma}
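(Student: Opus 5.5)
This is a purely algebraic identity in an inner-product space, so I would prove it by expanding squared norms; no properties of the algorithm, of $f$, or of $\cX$ are needed. The idea is to insert the intermediate point $x_{+}$ between $x$ and $z$. Write
\[
x - z = (x - x_{+}) + (x_{+} - z).
\]

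\textbf{Step 1: expand the square.} By bilinearity of the inner product,
\[
\|x-z\|^2 = \|x-x_{+}\|^2 + 2\langle x-x_{+},\,x_{+}-z\rangle + \|x_{+}-z\|^2 .
\]

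\textbf{Step 2: rearrange.} Solving for $\|x_{+}-z\|^2$ gives
\[
\|x_{+}-z\|^2 = \|x-z\|^2 - 2\langle x-x_{+},\,x_{+}-z\rangle - \|x-x_{+}\|^2,
\]
which is exactly the first displayed identity.

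\textbf{Step 3: specialize.} The particular form \eqref{eq:gen_decomp} follows by substituting $x_{+}=x_{k+1}$, $x=x_k$ and $z=x_\star$.

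There is no real obstacle here. The only point requiring care is the sign and orientation convention: the cross term is written as $\langle x - x_{+},\, x_{+} - z\rangle$, and it enters with a minus sign after the rearrangement in Step 2.

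This orientation is chosen for later use. In the proof of \Cref{thm:descent}, the optimality of $x_{k+1}$ for the local subproblem yields information precisely about the inner products of $x_k - x_{k+1}$ with vectors based at $x_{k+1}$. Moreover, $\|x_k - x_{k+1}\|^2 = t_k^2$ by part (ii) of that theorem. So once the cross term is shown to be nonnegative, \eqref{eq:gen_decomp} immediately gives \eqref{eq:thm_squared_distance_decrease}.
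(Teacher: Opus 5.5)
Your proof is correct and is essentially the paper's own argument: write $x-z=(x-x_{+})+(x_{+}-z)$, expand the square, rearrange, and substitute $x_{+}=x_{k+1}$, $x=x_k$, $z=x_\star$. Your closing remark on how the cross term is later shown to be nonnegative in the proof of \Cref{thm:descent} also matches how the paper uses the identity.
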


\begin{proof}
Observe that $ x-z=(x-x_{+})+(x_{+}-z). $
Hence, by expanding the squared norm, we get
\[
    \|x-z\|^2 = \|(x-x_{+})+(x_{+}-z)\|^2 = \|x-x_{+}\|^2+\|x_{+}-z\|^2+2\langle x-x_{+}, x_{+}-z\rangle.
\]
Rearranging this identity yields
\[
    \|x_{+}-z\|^2 = \|x-z\|^2 - 2\langle x - x_{+}, x_{+} - z\rangle -\|x - x_{+}\|^2,
\]
which proves the first claim. The second identity follows immediately by substituting
\[
    x_{+}=x_{k+1}, \qquad x=x_k, \qquad z=x_{\star}. \qedhere
\]
\end{proof}

\subsection{Strict convexity}

Let $f:\R^d \to \R \cup \{+\infty\}$ be a proper function. 
Recall that $f$ is \emph{strictly convex} if for every pair of distinct points $x, y\in \dom f$ and every $\lambda \in (0,1)$, we have
\[
    f(\lambda x + (1-\lambda)y) < \lambda f(x) + (1-\lambda)f(y).
\]

\begin{lemma}[Strict convexity implies strict monotonicity of the gradient]
\label{lem:strict_convexity}
    Let $f:\R^d \to \R$ be differentiable and strictly convex on a convex set $\cX \subseteq \R^d$. Then 
    \[
        \langle \nabla f(x)-\nabla f(y),\,x-y\rangle>0, \qquad \nabla f(x)\neq \nabla f(y)
    \]
    for any two distinct points $x ,y\in \cX$.
\end{lemma}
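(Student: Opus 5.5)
The plan is to prove the two claims in order: first strict monotonicity of the gradient from strict convexity, then the gradient inequality $\nabla f(x)\neq\nabla f(y)$ as a direct consequence.

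\textbf{Step 1: a strict first-order inequality.} Fix distinct $x,y\in\cX$ and set $m\eqdef\tfrac12(x+y)\in\cX$; convexity of $\cX$ guarantees $m\in\cX$. Differentiability and convexity of $f$ on $\cX$ give the usual first-order inequality $f(z)\ge f(w)+\langle\nabla f(w),z-w\rangle$ for all $z,w\in\cX$. This follows from the chord inequality $f(w+\lambda(z-w))\le f(w)+\lambda(f(z)-f(w))$ by dividing by $\lambda$ and letting $\lambda\downarrow 0$.

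To upgrade it to a strict inequality, apply it with base point $x$ and target $m$:
\[
f(m)\ge f(x)+\tfrac12\langle\nabla f(x),y-x\rangle .
\]
Strict convexity with $\lambda=\tfrac12$ gives
\[
f(m)<\tfrac12 f(x)+\tfrac12 f(y).
\]
Combining the two and multiplying by $2$ yields
\[
f(y)>f(x)+\langle\nabla f(x),y-x\rangle .
\]
This strictness transfer through the midpoint is the only genuinely delicate point. Taking limits of the chord inequality directly would lose strictness, which is why we pass through $m$.

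\textbf{Step 2: strict monotonicity.} By symmetry, swapping the roles of $x$ and $y$ in Step 1 gives
\[
f(x)>f(y)+\langle\nabla f(y),x-y\rangle .
\]
Adding this to the inequality from Step 1 and cancelling $f(x)+f(y)$ gives
\[
0>\langle\nabla f(x),y-x\rangle+\langle\nabla f(y),x-y\rangle,
\]
that is, $\langle\nabla f(x)-\nabla f(y),x-y\rangle>0$.

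\textbf{Step 3: distinct gradients.} If $\nabla f(x)=\nabla f(y)$, the inner product in Step 2 would be $0$, contradicting its strict positivity. Hence $\nabla f(x)\neq\nabla f(y)$.

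The only points needing care are the use of convexity of $\cX$, so that the midpoint and the chord segments lie where strict convexity is assumed, and the strict-inequality step through the midpoint in Step 1.
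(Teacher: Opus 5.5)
Your proof is correct and follows essentially the same route as the paper. Both arguments obtain the strict first-order inequality by comparing the tangent lower bound at a point of the segment $[x,y]$ with the strict chord bound, then add the two strict inequalities; the paper argues by contradiction (equality would force $f$ to be affine on the whole segment), whereas you argue directly at the midpoint and also spell out the deduction $\nabla f(x)\neq\nabla f(y)$, which the paper leaves implicit.
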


\begin{proof}
    Since $f$ is convex and differentiable, the first-order inequality holds at every point:
    \begin{equation}
        \label{eq:foi-x}
        f(y) \ge f(x) + \langle \nabla f(x),\,y-x\rangle,
    \end{equation}
    and
    \begin{equation*}
        f(x) \ge f(y) + \langle \nabla f(y),\,x-y\rangle.
    \end{equation*}

    We claim that if $x \neq y$, then both inequalities are in fact strict:
    \begin{equation}
        \label{eq:strict-foi-x}
        f(y)> f(x)+\langle \nabla f(x),\,y-x\rangle,
    \end{equation}
    \begin{equation}
        \label{eq:strict-foi-y}
        f(x)> f(y)+\langle \nabla f(y),\,x-y\rangle.
    \end{equation}

    Indeed, suppose for contradiction that equality holds in \eqref{eq:foi-x}, i.e., $f(y)=f(x)+\inner{\nabla f(x)}{y - x}$.
    Let $z_t\eqdef (1-t)x+ty$, $t\in[0, 1]$
    By convexity of $f$,
    \[
        f(z_t) \le (1 - t)f(x) + t f(y).
    \]
    Using the assumed equality, the right-hand side becomes
    \[
        (1-t) f(x) + t\bigl(f(x) + \langle \nabla f(x), y - x \rangle\bigr) = f(x) + t\langle \nabla f(x), y-x\rangle.
    \]
    On the other hand, applying \eqref{eq:foi-x} with $z_t$ in place of $y$, we get
    \[
        f(z_t) \ge f(x) + \langle \nabla f(x), z_t - x\rangle = f(x) + t\langle \nabla f(x), y - x\rangle.
    \]
    Hence
    \[
        f(z_t) = f(x) + t\langle \nabla f(x), y - x\rangle = (1 - t)f(x) + t f(y) \qquad \forall t\in[0,1].
    \]
    Thus $f$ is affine on the segment $[x,y]$, which contradicts strict convexity since $x\neq y$. Therefore \eqref{eq:strict-foi-x} holds. 
    The proof of \eqref{eq:strict-foi-y} is identical. Now add \eqref{eq:strict-foi-x} and \eqref{eq:strict-foi-y}:
    \[
        f(y) + f(x) > f(x)+\langle \nabla f(x),y - x\rangle + f(y)+\langle \nabla f(y),x-y\rangle.
    \]
    Canceling $f(x)+f(y)$ from both sides yields
    \[
        0 > \langle \nabla f(x), y - x\rangle + \langle \nabla f(y), x-y\rangle.
    \]
    Rearranging gives $\langle \nabla f(x)-\nabla f(y), x - y \rangle > 0$, which proves the claim.
\end{proof}

\subsection{Reverse Cauchy--Schwarz inequality}

The Cauchy--Schwarz inequality says that 
\[\langle u, v \rangle \leq \|u\| \|v\|\]
for any vectors $u,v\in \R^d$. In general, Cauchy--Schwarz does not admit a reverse form. However, when $u$ and $v$ are special, a reverse form of the Cauchy--Schwarz inequality holds. In particular, such a reverse inequality holds for \[v = x - y, \qquad u = \nabla f(x) - \nabla f(y),\]
where $f$ is $L$--smooth and $\mu$--strongly convex, stated and proved below. It plays an important role in our convergence proof of \algname{Local LMO} in the smooth and strongly convex regime.

\begin{lemma}
    \label{lem:strong-smooth-interpolation}
    If $f$ is differentiable, $L$--smooth, and $\mu$--strongly convex, then 
    \begin{equation}
        \label{eq:reverse-cs}
        \theta\, \|\nabla f(x)-\nabla f(y)\|\,\|x-y\| \leq \langle \nabla f(x)-\nabla f(y),x-y\rangle, \qquad \forall x,y\in \R^d,
    \end{equation}
    where $ \theta\eqdef \frac{2\sqrt{\mu L}}{L+\mu}$.
\end{lemma}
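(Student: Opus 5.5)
The plan is to start from the classical interpolation inequality for $L$--smooth, $\mu$--strongly convex functions \citep{nesterov2018lectures}:
\[
\langle \nabla f(x)-\nabla f(y),x-y\rangle \ge \frac{\mu L}{L+\mu}\|x-y\|^2 + \frac{1}{L+\mu}\|\nabla f(x)-\nabla f(y)\|^2 .
\]
I will take this as a known fact. If a self-contained argument is preferred, it follows by applying co-coercivity of the $(L-\mu)$--smooth convex function $h(x)\eqdef f(x)-\frac{\mu}{2}\|x\|^2$ and rearranging. In that route, the case $L=\mu$ is degenerate and handled directly: then $\nabla f(x)-\nabla f(y)=\mu(x-y)$ and $\theta=1$, so \eqref{eq:reverse-cs} holds with equality.

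The second step is to lower bound the right-hand side by AM--GM. Writing $a\eqdef\|x-y\|$ and $b\eqdef\|\nabla f(x)-\nabla f(y)\|$, we have
\[
\frac{\mu L}{L+\mu}a^2+\frac{1}{L+\mu}b^2 \ge \frac{2}{L+\mu}\sqrt{\mu L}\,ab = \theta\, ab,
\]
since $\alpha a^2+\beta b^2\ge 2\sqrt{\alpha\beta}\,ab$ for $\alpha,\beta\ge 0$. Chaining the two inequalities gives \eqref{eq:reverse-cs} directly.

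The only real obstacle is justifying the interpolation inequality. If I prove it rather than cite it, the work is to check that $h$ is convex and $(L-\mu)$--smooth, to use co-coercivity $\langle \nabla h(x)-\nabla h(y),x-y\rangle\ge \frac{1}{L-\mu}\|\nabla h(x)-\nabla h(y)\|^2$, and then to substitute $\nabla h=\nabla f-\mu\,\mathrm{id}$. Expanding and collecting terms is routine algebra that yields the stated coefficients $\frac{\mu L}{L+\mu}$ and $\frac{1}{L+\mu}$. I would note that $\theta\le 1$, so the result is consistent with Cauchy--Schwarz.
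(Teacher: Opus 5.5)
Your proposal is correct and follows the paper's proof: the paper also starts from the standard interpolation inequality $\langle \nabla f(x)-\nabla f(y),x-y\rangle \ge \frac{1}{L+\mu}\bigl(\|\nabla f(x)-\nabla f(y)\|^2+\mu L\|x-y\|^2\bigr)$ and applies AM--GM to its right-hand side. The paper takes that inequality as standard, whereas you also derive it via co-coercivity of $f-\frac{\mu}{2}\|\cdot\|^2$, handling the case $L=\mu$ separately; this derivation is correct and makes the argument self-contained.
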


\begin{proof}
    Let us start with the standard inequality satisfied by all $L$--smooth and $\mu$--strongly convex functions:
    \begin{equation}
        \label{eq:interp-strong-smooth}
        \langle \nabla f(x)-\nabla f(y), x - y\rangle \ge \frac{1}{L + \mu} \left( \|\nabla f(x)-\nabla f(y)\|^2+\mu L\|x-y\|^2 \right), \qquad x, y\in \R^d.
    \end{equation}
    Applying the arithmetic--geometric mean inequality $a^2+b^2\ge 2ab$
    with $a=\|\nabla f(x)-\nabla f(y)\|$ and $b=\sqrt{\mu L}\,\|x-y\|$, we obtain
    \[
        \|\nabla f(x)-\nabla f(y)\|^2+\mu L\|x-y\|^2 \ge 2\sqrt{\mu L}\,\|\nabla f(x)-\nabla f(y)\|\,\|x-y\|.
    \]
    Substituting this into \eqref{eq:interp-strong-smooth} proves \eqref{eq:reverse-cs}.
\end{proof}

\newpage

\section{Proofs for \Cref{sec:theory}}

\subsection{Proof of \Cref{thm:descent}}
For the reader's convenience, we restate the \Cref{thm:descent} here.

\ONESTEP*
\subsubsection{Type-I admissibility case} \label{sec:proof-I}

\begin{proof}
    By the radius condition \eqref{eq:radius-condition-1}, and the Cauchy--Schwarz inequality, we have
    \[
        t_k \overset{\eqref{eq:radius-condition-1}}{\le} \frac{\langle\nabla f(x_k),x_k-x_\star\rangle}{\|\nabla f(x_k)\|} \leq \frac{\|\nabla f(x_k)\|\,\|x_k-x_\star\|}{\|\nabla f(x_k)\|} = \|x_k-x_\star\|,
    \]
    establishing claim {\em (i)}. 
    Let us now establish claims {\em (ii)} and {\em (iii)}. 
    Fix $k\ge 0$, and for the purposes of this proof, let us use the shorter notation $\cB_k \eqdef \cB(x_k,t_k)$.

    \paragraph{Step 1: A favorable half-space inequality.}
    Using the trivial decomposition
    \[
        x_{k+1}-x_\star=(x_k-x_\star)+(x_{k+1}-x_k),
    \]
    we can write
    \begin{equation}\label{eq:bgd-old-step1-decomp}
        \langle \nabla f(x_k),x_{k+1}-x_\star\rangle
        =
        \langle \nabla f(x_k),x_k-x_\star\rangle
        +
        \langle \nabla f(x_k),x_{k+1}-x_k\rangle.
    \end{equation}

    Since $x_{k+1}\in \cB_k$, we have $\|x_{k+1}-x_k\|\le t_k$. 
    Using the Cauchy--Schwarz inequality, we can therefore bound
    \begin{eqnarray}
        \langle \nabla f(x_k),x_{k+1}-x_k\rangle &\ge& -\|\nabla f(x_k)\|\,\|x_{k+1}-x_k\| \notag\\ 
        &\ge& -\|\nabla f(x_k)\|\,t_k.
        \label{eq:bgd-old-step1-cs}
    \end{eqnarray}
    Plugging this bound back into \eqref{eq:bgd-old-step1-decomp}, and using the radius bound \eqref{eq:radius-condition-1}, we get
    \begin{eqnarray}
        \langle \nabla f(x_k),x_{k+1}-x_\star\rangle
        &\overset{\eqref{eq:bgd-old-step1-decomp}+\eqref{eq:bgd-old-step1-cs}}{\ge}&
        \langle \nabla f(x_k),x_k-x_\star\rangle-\|\nabla f(x_k)\|\,t_k \notag\\
        &\overset{\eqref{eq:radius-condition-1}}{\ge}&
        0.
        \label{eq:bgd-old-gk-halfspace}
    \end{eqnarray}

    \paragraph{Step 2: KKT conditions for a constrained linear optimization problem.}
    Since $x_{k+1}$ minimizes the linear functional $z \mapsto \langle \nabla f(x_k),z\rangle$ over the nonempty closed convex set $\cX\cap \cB_k$, we have
    \[
        0\in \nabla f(x_k)+N_{\cX}(x_{k+1})+N_{\cB_k}(x_{k+1}),
    \]
    where $N_{\cC}(z)$ denotes the normal cone of a set $\cC$ at a point $z$.
    Hence there exist
    \[
        n_k\in N_{\cX}(x_{k+1}), \qquad \lambda_k\ge 0
    \]
    such that
    \begin{equation}
        \label{eq:bgd-old-kkt}
        \nabla f(x_k) + n_k + \lambda_k(x_{k+1}-x_k)= 0.
    \end{equation}

    \paragraph{Step 3: Deductions from the KKT conditions.}
    Taking the inner product of the zero vector from~\eqref{eq:bgd-old-kkt} with $x_{k+1} - x_\star$ gives the identity
    \begin{equation}
        \label{eq:bgd-old-three-term}
        \langle \nabla f(x_k),x_{k+1} - x_\star\rangle + \langle n_k, x_{k+1} - x_\star\rangle + \lambda_k\langle x_{k+1}-x_k,x_{k+1}- x_\star\rangle =0.
    \end{equation}
    Note that due to \eqref{eq:bgd-old-gk-halfspace}, the first term in \eqref{eq:bgd-old-three-term} is nonnegative. 
    We shall now argue that the second term is nonnegative, too. 
    Indeed, since $n_k\in N_{\cX}(x_{k+1})$ and $x_\star\in \cX$, we must have
    \begin{equation}
        \label{eq:bgd-old-normalcone}
        \langle n_k,x_{k+1}-x_\star\rangle\ge 0.
    \end{equation}
    Since the sum of the three terms is zero, the third term in \eqref{eq:bgd-old-three-term} must be nonpositive, which can be equivalently written as
    \begin{equation}\label{eq:bgd-old-lambda-inner}
        \lambda_k\langle x_k-x_{k+1},x_{k+1}- x_\star\rangle\ge 0.
    \end{equation}

    \paragraph{Step 4: Two cases.}
    We now split the argument into two cases.

    \paragraph{Case 1: $\lambda_k > 0$.}
    Then $x_{k+1}$ lies on the boundary of the ball $\cB_k$, and thus
    \begin{equation}
        \label{eq:bgd-old-fullstep-case1}
        \|x_{k+1}-x_k\|=t_k.
    \end{equation}
    So, identity {\em (ii)} holds in this case.
    Moreover, \eqref{eq:bgd-old-lambda-inner} implies that
    \begin{equation}
        \label{eq:bgd-old-innerprod-case1}
        \langle x_k-x_{k+1},x_{k+1}-x_\star\rangle\ge 0.
    \end{equation}
    Therefore
    \begin{eqnarray*}
        \|x_{k+1}-x_\star\|^2 &=& \|x_k-x_\star-(x_k-x_{k+1})\|^2 \\
        &\overset{\eqref{eq:gen_decomp}}{=}&
        \|x_k-x_\star\|^2 - 2\langle x_k-x_{k+1},x_{k+1}-x_\star\rangle -\|x_k-x_{k+1}\|^2\\
        &\overset{\eqref{eq:bgd-old-fullstep-case1}+\eqref{eq:bgd-old-innerprod-case1}}{\le}& \|x_k-x_\star\|^2-t_k^2.
    \end{eqnarray*}
    So, inequality {\em (iii)} holds in this case.

    \paragraph{Case 2: $\lambda_k=0$.}
    Then \eqref{eq:bgd-old-kkt} becomes $ \nabla f(x_k)+n_k=0 $, which means that
    \[
        0\in \nabla f(x_k)+N_{\cX}(x_{k+1}).
    \]
    Therefore, $x_{k+1}$ minimizes the linear function $z\mapsto \langle \nabla f(x_k),z\rangle$ over all of $\cX$. 
    In particular, it must be the case that
    $
        \langle \nabla f(x_k),x_{k+1}-x_\star\rangle\le 0.
    $
    Combined with \eqref{eq:bgd-old-gk-halfspace}, this yields
    \begin{equation}
        \label{eq:bgd-old-gk-zero}
        \langle \nabla f(x_k),x_{k+1}-x_\star\rangle=0.
    \end{equation}

    Likewise, from $\nabla f(x_k)+n_k=0$ and the normal-cone inequality
    $
        \langle n_k,x_{k+1}-x_\star\rangle\ge 0
    $
    (recall \eqref{eq:bgd-old-normalcone}), we also get
    \begin{equation*}
        \langle n_k,x_{k+1}-x_\star\rangle=0.
    \end{equation*}

    Further, expanding \eqref{eq:bgd-old-gk-zero}, we obtain
    \[
        0 = \langle \nabla f(x_k),x_k-x_\star\rangle + \langle \nabla f(x_k),x_{k+1}-x_k\rangle.
    \]
    Using \eqref{eq:radius-condition-1} and the Cauchy--Schwarz inequality, we arrive at
    \begin{eqnarray*}
        0 &=& \langle \nabla f(x_k),x_k-x_\star\rangle + \langle \nabla f(x_k),x_{k+1}-x_k\rangle\\
        &\overset{\eqref{eq:radius-condition-1}}{\ge}&
        \|\nabla f(x_k)\|\,t_k-\|\nabla f(x_k)\|\,\|x_{k+1}-x_k\|.
    \end{eqnarray*}
    Hence
    $
        \|x_{k+1}-x_k\| \ge t_k.
    $
    However, since $x_{k+1}\in \cB_k$, we also have $\|x_{k+1}-x_k\|\le t_k$. Therefore,
    \begin{equation}
        \label{eq:bgd-old-fullstep-case2}
        \|x_{k+1}-x_k\|=t_k,
    \end{equation}
    and hence identity {\em (ii)} holds in this case also.

    Moreover, equality in Cauchy--Schwarz implies that $x_{k+1}-x_k$ is a multiple of $-\nabla f(x_k)$. Since $\|x_{k+1}-x_k\|=t_k$, we must therefore have
    \[
        x_{k+1}-x_k=-\,t_k\,\frac{\nabla f(x_k)}{\|\nabla f(x_k)\|}.
    \]
    Since $\langle \nabla f(x_k),x_{k+1}-x_\star\rangle=0$ (see \eqref{eq:bgd-old-gk-zero}), it follows that
    \begin{equation}
        \label{eq:bgd-old-innerprod-case2}
        \langle x_k-x_{k+1},x_{k+1}-x_\star\rangle=0.
    \end{equation}
    Therefore,
    \begin{eqnarray*}
        \|x_{k+1}-x_\star\|^2 &\overset{\eqref{eq:gen_decomp}}{=}& \|x_k-x_\star\|^2 - 2\langle x_k-x_{k+1},x_{k+1}-x_\star\rangle -\|x_k-x_{k+1}\|^2\\
        &\overset{\eqref{eq:bgd-old-fullstep-case2}+\eqref{eq:bgd-old-innerprod-case2}}{=}& \|x_k-x_\star\|^2-t_k^2.
    \end{eqnarray*}
    So, inequality {\em (iii)} holds in this case also.
\end{proof}

\subsubsection{Type-II admissibility case} \label{sec:proof-II}

\begin{proof}
    By the radius condition \eqref{eq:radius-condition-2}, and the Cauchy--Schwarz inequality, we have
    \[
        t_k \overset{\eqref{eq:radius-condition-2}}{\le} \frac{\langle\nabla f(x_k)- \nabla f(x_\star),x_k-x_\star\rangle}{\|\nabla f(x_k)- \nabla f(x_\star)\|} \leq \frac{\|\nabla f(x_k)- \nabla f(x_\star)\|\,\|x_k-x_\star\|}{\|\nabla f(x_k)- \nabla f(x_\star)\|} = \|x_k-x_\star\|,
    \]
    establishing claim {\em (i)}. 
    Let us now establish claims {\em (ii)} and {\em (iii)}. 
    Fix $k\ge 0$, and for the purposes of this proof, let us use the shorter notation $\cB_k \eqdef \cB(x_k,t_k)$.

    \paragraph{Step 1: Perturbed monotonicity.}
    Using the trivial decomposition 
    \[
        x_{k+1}-x_{\star}=(x_k-x_{\star})+(x_{k+1}-x_k),
    \]
    we can write
    \begin{eqnarray} 
        \label{eq:-9-fyd8fdg}
        &&\hspace{-8mm} \langle \nabla f(x_{k}) - \nabla f(x_{\star}),x_{k+1}-x_{\star}\rangle \nonumber \\
        &=& \langle \nabla f(x_{k}) - \nabla f(x_{\star}),x_k-x_{\star}\rangle + \langle \nabla f(x_{k}) - \nabla f(x_{\star}),x_{k+1}-x_k\rangle.
    \end{eqnarray}

    Since $x_{k+1} \in \cB_k$, we have $\|x_{k+1}-x_k\|\le t_k$. 
    Using the Cauchy--Schwarz inequality, we can therefore bound
    \begin{eqnarray} 
        \langle \nabla f(x_{k}) - \nabla f(x_{\star}),x_{k+1}-x_k\rangle &\geq& -\|\nabla f(x_{k}) - \nabla f(x_{\star})\|\,\|x_{k+1}-x_k\| \notag \\
        & \geq &  -\|\nabla f(x_{k}) - \nabla f(x_{\star})\|\,t_k.\label{eq:B_k}
    \end{eqnarray}
    Plugging this bound back into \eqref{eq:-9-fyd8fdg}, and using the radius bound \eqref{eq:radius-condition-2}, we get
    \begin{eqnarray}
        \langle \nabla f(x_{k}) - \nabla f(x_{\star}),x_{k+1}-x_{\star}\rangle & \overset{\eqref{eq:-9-fyd8fdg} + \eqref{eq:B_k}}{\geq} & \langle \nabla f(x_{k}) - \nabla f(x_{\star}),x_k-x_{\star}\rangle-\|\nabla f(x_{k}) - \nabla f(x_{\star})\|t_k \notag \\
        & \overset{\eqref{eq:radius-condition-2}}{\geq} & 0.\label{eq:r-halfspace-fixed}
    \end{eqnarray}

    \paragraph{Step 2: Applying the first-order optimality condition.}
    Since $x_{\star}$ minimizes $f$ over $\cX$, the first-order optimality condition for \eqref{eq:main} gives
    $
        \langle \nabla f(x_{\star}),z-x_{\star}\rangle \ge 0,
    $
    for all $z\in \cX$.
    Since $x_{k+1}\in \cX$, this implies
    \begin{equation}
        \label{eq:gstar-halfspace-fixed}
        \langle \nabla f(x_{\star}),x_{k+1}-x_{\star}\rangle\ge 0.
    \end{equation}

    \paragraph{Step 3: Combining Steps 1 and 2.}
    Adding \eqref{eq:r-halfspace-fixed} and \eqref{eq:gstar-halfspace-fixed}, we obtain
    \begin{equation}
        \label{eq:gk-halfspace-fixed}
        \langle \nabla f(x_{k}),x_{k+1}-x_{\star}\rangle = \langle \nabla f(x_{k}) - \nabla f(x_{\star}),x_{k+1}-x_{\star}\rangle + \langle \nabla f(x_{\star}),x_{k+1}-x_{\star}\rangle \ge 0.
    \end{equation}

    \paragraph{Step 4: KKT conditions for a constrained linear optimization problem.}
    Since $x_{k+1}$ minimizes the linear functional $z \mapsto \langle \nabla f(x_{k}), z\rangle $ over the nonempty closed convex set $\cX\cap \cB_k$, we have
    \[
        0\in \nabla f(x_{k})+N_{\cX}(x_{k+1})+N_{\cB_k}(x_{k+1}),
    \]
    where $N_{\cC}(z)$ denotes the normal cone of set $\cC$ at point $z$ .
    Hence there exist
    \[
        n_k\in N_{\cX}(x_{k+1}), \qquad \lambda_k\ge 0
    \]
    such that
    \begin{equation}
        \label{eq:kkt-fixed}
        \nabla f(x_{k})+n_k+\lambda_k(x_{k+1}-x_k)=0.
    \end{equation}

    \paragraph{Step 5: Deductions from the KKT conditions.}
    Taking the inner product of the zero vector from \eqref{eq:kkt-fixed} with $x_{k+1}-x_{\star}$ gives the identity
    \begin{equation}
        \langle \nabla f(x_{k}),x_{k+1}-x_{\star}\rangle + \langle n_k,x_{k+1}-x_{\star}\rangle + \lambda_k\langle x_{k+1}-x_k,x_{k+1}-x_{\star}\rangle =0. 
        \label{eq:3-term=0-9u0f9df}
    \end{equation}
    Note that due to \eqref{eq:gk-halfspace-fixed}, the first term in \eqref{eq:3-term=0-9u0f9df} is nonnegative. 
    We shall now argue that the second term is nonnegative, too. 
    Indeed, since $n_k\in N_{\cX}(x_{k+1})$ and $x_{\star}\in \cX$, we must have
    \begin{equation} 
        \label{eq:nci_098fu980df} 
        \langle n_k,x_{k+1}-x_{\star}\rangle\ge 0.
    \end{equation}
    Since the sum of the three terms is zero, the third term in \eqref{eq:3-term=0-9u0f9df} must be nonpositive, which can be equivalently written as
    \begin{equation}
        \lambda_k\langle x_k-x_{k+1},x_{k+1}-x_{\star}\rangle \ge 0. 
        \label{eq:lambda_inner_nonnegative_98f8df}
    \end{equation}

    \paragraph{Step 6: Two cases.}
    We now split the argument into two cases.

    \paragraph{Case 1: $\lambda_k>0$.}
    Then $x_{k+1}$ lies on the boundary of the ball $\cB_k$, and thus
    \begin{equation} 
        \label{eq:08fdu0fdg}
        \|x_{k+1}-x_k\|=t_k.
    \end{equation}
    So, identity {\em (ii)} holds in this case.
    Moreover, \eqref{eq:lambda_inner_nonnegative_98f8df} implies that
    \begin{equation}
        \langle x_k-x_{k+1},x_{k+1}-x_{\star}\rangle\ge 0. 
        \label{eq:o=f-98ygf08gfdd}
    \end{equation}
    Therefore
    \begin{eqnarray*}
        \|x_{k+1}-x_{\star}\|^2 &=& \|x_k-x_{\star}-(x_k-x_{k+1})\|^2 \\
        &\overset{\eqref{eq:gen_decomp}}{=}& \|x_k-x_{\star}\|^2 - 2\langle x_k-x_{k+1},x_{k+1}-x_{\star}\rangle - \|x_k-x_{k+1}\|^2\\
        &\overset{\eqref{eq:08fdu0fdg} + \eqref{eq:o=f-98ygf08gfdd}}{\le} & \|x_k-x_{\star}\|^2-t_k^2.
    \end{eqnarray*}
    So, inequality {\em (iii)} holds in this case.

    \paragraph{Case 2: $\lambda_k=0$.}
    Then \eqref{eq:kkt-fixed} becomes $\nabla f(x_{k}) + n_k=0$, which means that 
    \[
        0\in \nabla f(x_{k})+N_{\cX}(x_{k+1}).
    \]
    Therefore, $x_{k+1}$ minimizes the linear function $z\mapsto \langle \nabla f(x_{k}),z\rangle$ over all of $\cX$. In particular, it must be the case that
    $
        \langle \nabla f(x_{k}),x_{k+1}-x_{\star}\rangle \le 0.
    $
    Combined with \eqref{eq:gk-halfspace-fixed}, this yields
    \begin{equation}
        \label{eq:gk_09ufd89dfg_09gf}
        \langle \nabla f(x_{k}),x_{k+1}-x_{\star}\rangle = 0.
    \end{equation}

    Likewise, from $\nabla f(x_{k})+n_k=0$ and the normal-cone inequality 
    $
        \langle n_k,x_{k+1}-x_{\star}\rangle \ge 0
    $
    (recall \eqref{eq:nci_098fu980df}),
    we also get
    $
        \langle n_k,x_{k+1}-x_{\star}\rangle = 0.
    $
    Further,
    \[
        0\overset{\eqref{eq:gk_09ufd89dfg_09gf}}{=}\langle \nabla f(x_{k}),x_{k+1}-x_{\star}\rangle = \langle \nabla f(x_{k}) - \nabla f(x_{\star}), x_{k + 1}-x_{\star}\rangle + \langle \nabla f(x_{\star}),x_{k+1}-x_{\star}\rangle.
    \]
    Both terms on the right are nonnegative by \eqref{eq:r-halfspace-fixed} and \eqref{eq:gstar-halfspace-fixed}, so each must vanish. In particular, 
    \begin{equation} 
        \label{eq:=-9=dug(*&*908f}
        \langle \nabla f(x_{k}) - \nabla f(x_{\star}),x_{k+1}-x_{\star}\rangle=0.
    \end{equation}
    Expanding \eqref{eq:=-9=dug(*&*908f}, using \eqref{eq:radius-condition-2} and the Cauchy--Schwarz inequality, we arrive at
    \begin{eqnarray*}
        0 &\overset{\eqref{eq:=-9=dug(*&*908f}}{=}&
        \langle \nabla f(x_{k}) - \nabla f(x_{\star}),x_k-x_{\star}\rangle + \langle \nabla f(x_{k}) - \nabla f(x_{\star}),x_{k+1}-x_k\rangle \\
        &\overset{\eqref{eq:radius-condition-2}}{\ge} &
        \|\nabla f(x_{k}) - \nabla f(x_{\star})\|\,t_k-\|\nabla f(x_{k}) - \nabla f(x_{\star})\|\,\|x_{k+1}-x_k\|.
    \end{eqnarray*}
    Since we assume that $\nabla f(x_{k}) \neq \nabla f(x_{\star})$, this implies $\|x_{k+1}-x_k\| \geq t_k$.
    However, since $x_{k+1}\in \cB_k$, we also have $\|x_{k+1}-x_k\|\le t_k$. Therefore,
    \begin{equation} 
        \label{eq:(*98y98f_08yf98dfd}
        \|x_{k+1}-x_k\|=t_k,
    \end{equation}
    and hence identity {\em (ii)} holds in this case also.
    Moreover, equality in Cauchy--Schwarz implies that $x_{k+1}-x_{k}$ is a multiple of $\nabla f(x_{\star}) - \nabla f(x_{k})$. Since $\|x_{k+1} - x_k\|=t_k$, we must therefore have
    \[
        x_{k+1}-x_k = -\,t_k\,\frac{\nabla f(x_{k}) - \nabla f(x_{\star})}{\|\nabla f(x_{k}) - \nabla f(x_{\star})\|}.
    \]
    Since $\langle \nabla f(x_{k}) - \nabla f(x_{\star}),x_{k+1}-x_{\star}\rangle=0$ (see \eqref{eq:=-9=dug(*&*908f}), it follows that
    \begin{equation}
        \label{eq:09u0f9u0_0fu0d98ff}
        \langle x_k-x_{k+1},x_{k+1}-x_{\star}\rangle=0.
    \end{equation}
    Therefore,
    \begin{eqnarray*}
        \|x_{k+1}-x_{\star}\|^2 &\overset{\eqref{eq:gen_decomp}}{=}& \|x_k-x_{\star}\|^2 - 2\langle x_k-x_{k+1},x_{k+1}-x_{\star}\rangle - \|x_k-x_{k+1}\|^2\\
        &\overset{\eqref{eq:(*98y98f_08yf98dfd}+\eqref{eq:09u0f9u0_0fu0d98ff}}{=}& \|x_k-x_{\star}\|^2-t_k^2.
    \end{eqnarray*}
    So, inequality {\em (iii)} holds in this case also.
\end{proof}

\subsection{Proof of \Cref{thm:L-smooth}}

For convenience, we restate \Cref{thm:L-smooth} here.

\LSMOOTH*
\begin{proof}
    By cocoercivity,
    \[
        \frac1L\|\nabla f(x_{k}) - \nabla f(x_{\star})\|^2 \leq \langle \nabla f(x_{k}) - \nabla f(x_{\star}),x_{k}-x_{\star}\rangle,
    \]
    so the choice $t_k= \frac{\|\nabla f(x_{k}) - \nabla f(x_{\star})\|}{L}$ satisfies radius condition \eqref{eq:radius-condition-2}. 
    Hence \Cref{thm:descent} gives
    \[
        \|x_{k+1}-x_\star\|^2\le \|x_{k}-x_\star\|^2-\frac{\|\nabla f(x_{k}) - \nabla f(x_{\star})\|^2}{L^2}.
    \]
    Summing this for $k=0,\dots,K-1$ yields
    \[
        \|x_{K} - x_\star\|^2 \le \|x_{0} - x_\star\|^2-\frac1{L^2}\sum_{k=0}^{K-1}\|\nabla f(x_{k}) - \nabla f(x_{\star})\|^2.
    \]
    Since $\|x_{k}-x_\star\|^2\ge 0$, we obtain
    \[
        \sum_{k=0}^{K-1}\|\nabla f(x_{k}) - \nabla f(x_{\star})\|^2 \le L^2\|x_{0} - x_\star\|^2,
    \]
    which proves \eqref{eq:bgd-smooth-residual-sum}. Dividing by $K$ and taking the minimum gives
    \[
        \min_{0\le k\le K-1}\|\nabla f(x_{k}) - \nabla f(x_{\star})\|^2 \le \frac{1}{K} \sum_{k=0}^{K-1}\|\nabla f(x_{k}) - \nabla f(x_{\star})\|^2 \le \frac{L^2\|x_{0} - x_\star\|^2}{K}.
    \]
\end{proof}

\subsection{Proof of \Cref{thm:smooth-strong}}

For convenience, we restate \Cref{thm:smooth-strong} here.
\SMOOTHSTRONG*
\begin{proof}
    The key ingredient in the proof of \Cref{thm:smooth-strong} is \Cref{lem:strong-smooth-interpolation}.
    Using \Cref{lem:strong-smooth-interpolation} with $x=x_{k}$ and $y=x_{\star}$, we see that the radius $t_k =\theta\,\|x_{k}-x_{\star}\|$ admits the bound
    \[
        t_k =\theta\,\|x_{k}-x_{\star}\| \leq \frac{\langle \nabla f(x_{k}) - \nabla f(x_{\star}),x_{k}-x_{\star}\rangle}{\|\nabla f(x_{k}) - \nabla f(x_{\star})\|}.
    \]
    Since the assumptions of \Cref{thm:descent}(iii) are satisfied, we conclude that
    \begin{equation*}
        \|x_{k+1} -x_{\star}\|^2 \le \|x_{k} -x_{\star}\|^2 - t_k^2 = (1-\theta^2)\|x_{k} -x_{\star}\|^2 = \frac{(L-\mu)^2}{(L+\mu)^2}\|x_k - x_\star\|^2
    \end{equation*}
    establishing the claimed contraction.

    Finally, by \Cref{lem:strict_convexity}, strict convexity ensures that $\nabla f(x_k)\neq \nabla f(x_\star)$ unless $x_k=x_\star$. Therefore, the recursion above can be unrolled for all non-optimal iterates, proving the bound in \eqref{eq:linear-improved}. 
    The result holds trivially if $x_k=x_\star$.
\end{proof}

\subsection{Proof of \Cref{thm:bounded_gradients}}

For convenience, we restate \Cref{thm:bounded_gradients} here.
\BDGRADS*
\begin{proof}
    Fix $k\ge 0$. If $x_k = x_\star$, then $t_k=0$, $x_{k+1}=x_\star$, and all claims are trivial. So assume $x_k\neq x_\star$. Since $f$ is convex and differentiable, we have 
    $
        f(x_k)-f(x_\star)\le \langle \nabla f(x_k),x_k-x_\star\rangle.
    $
    Dividing by $\|\nabla f(x_k)\|$, we obtain
    \[
        t_k = \frac{f(x_k)-f(x_\star)}{\|\nabla f(x_k)\|} \le \frac{\langle \nabla f(x_k),x_k-x_\star\rangle}{\|\nabla f(x_k)\|}.
    \]
    Hence the radius condition \eqref{eq:radius-condition-1} from \Cref{thm:descent} is satisfied. Therefore,
    \[
        \|x_{k+1}-x_\star\|^2 \le \|x_k-x_\star\|^2-t_k^2.
    \]
    Substituting the definition of $t_k$ yields
    \begin{eqnarray*}
        \|x_{k+1}-x_\star\|^2 &\le & \|x_k-x_\star\|^2 - \frac{(f(x_k)-f(x_\star))^2}{\|\nabla f(x_k)\|^2} \\
        &\overset{\eqref{eq:grad-bounded}}{\le} &\|x_k-x_\star\|^2 - \frac{(f(x_k)-f(x_\star))^2}{G^2},
    \end{eqnarray*}
    Summing this inequality for $k=0,1,\dots,K-1$, we get
    \[
        \|x_K-x_\star\|^2 \le \|x_0-x_\star\|^2 - \frac{1}{G^2}\sum_{k=0}^{K-1}(f(x_k)-f(x_\star))^2.
    \]
    Since the left-hand side is nonnegative, this implies
    \[
        \frac{1}{K}\sum_{k=0}^{K-1}(f(x_k)-f(x_\star))^2 \le \frac{G^2\|x_0-x_\star\|^2}{K},
    \]
    which proves \eqref{eq:square-sum-bound}. 
    Apply Jensen's inequality twice, we get
    \begin{eqnarray*}
        \rbrac{f(\hat x_K) - f(x_\star)}^2 &=& \left( f\left(\frac{1}{K}\sum_{k=0}^{K-1}x_k\right) - f(x_\star) \right)^2\\
        &\leq& \left( \frac{1}{K}\sum_{k=0}^{K-1} f(x_k) - f(x_\star) \right)^2 \leq  \frac{1}{K}\sum_{k=0}^{K-1} (f(x_k)-f(x_\star))^2 .
    \end{eqnarray*}
    Therefore, the average iterate $\hat x_K$ satisfies \eqref{eq:avg_iter_G}.
\end{proof}

\newpage

\section{Convergence theory of Projected Gradient Descent}
\label{sec:pgd_positive}

\subsection{Smooth convex case}
We first consider the smooth convex regime.

\begin{proposition}[Convergence for \algname{PGD}]
    \label{prop:pgd_rates}
    Let $\cX \subseteq \R^d$ be a nonempty closed convex set.
    Let $f:\R^d\to\R$ be convex and $L$--smooth on an open set containing $\cX$.  Consider the \algname{PGD} update
    \[
        x_{k+1}=\Proj_{\cX}\left(x_k-\gamma \nabla f(x_k)\right), \qquad 0<\gamma\le \frac1L.
    \]
    Define the projected gradient mapping by
    \[
        G_\gamma(x)\eqdef \frac1\gamma\Bigl(x-\Proj_{\cX}(x-\gamma \nabla f(x))\Bigr),
    \]
and let $x_\star \in \cX_\star\eqdef \argmin_{x\in \cX} f(x)$.    Then the following hold for all $k\ge 1$:

    (i) \textbf{One-step descent:}
    \begin{align}
        \label{eq:non-inc-0001}
        f(x_{k+1}) \le f(x_k)-\frac{\gamma}{2}\|G_\gamma(x_k)\|^2.
    \end{align}

    (ii) \textbf{Fej\'er-type inequality:}
    \[
        \|x_{k+1}-x_\star\|^2 \le \|x_k-x_\star\|^2-2\gamma\bigl(f(x_{k+1})-f(x_\star)\bigr).
    \]

    (iii) \textbf{Function value rate:}
    \[
        f(x_k)-f(x_\star) \le \frac{\|x_0-x_\star\|^2}{2\gamma k}.
    \]

    (iv) \textbf{Best-iterate projected-gradient rate:}
    \[
        \min_{0\le i\le k-1}\|G_\gamma(x_i)\|^2 \le \frac{2\bigl(f(x_0)-f(x_\star)\bigr)}{\gamma k}.
    \]

    (v) \textbf{Distance-based projected-gradient rate for strict stepsizes:}
    if $0<\gamma< \nicefrac1L$, then
    \[
        \min_{0\le i\le k-1}\|G_\gamma(x_i)\|^2\le \frac{\|x_0-x_\star\|^2}{\gamma^2(1-\gamma L)\,k}.
    \]

    (vi) \textbf{Ergodic function value rate:}
    for the average $\bar x_k \eqdef \frac1k\sum_{i=1}^{k}x_i$, one has
    \[
        f(\bar x_k)-f(x_\star) \le \frac{\|x_0-x_\star\|^2}{2\gamma k}.
    \]

    (vii) \textbf{Gradient-difference bound:}
    \[
        \min_{0\le i\le k-1}\|\nabla f(x_i)-\nabla f(x_\star)\|^2 \le \frac{\|x_0-x_\star\|^2}{\gamma^2 k}.
    \]

  \end{proposition}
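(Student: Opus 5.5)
The whole argument rests on one inequality, the three-point lemma for the projected step. Write $x_{k+1}=x_k-\gamma G_\gamma(x_k)$. The projection characterization gives, for all $z\in\cX$,
\[
\inner{x_k-\gamma\nabla f(x_k)-x_{k+1}}{z-x_{k+1}}\le 0 .
\]
Combine this with $L$--smoothness at $x_k$ (upper quadratic bound at $x_{k+1}$) and convexity at $x_k$ (lower bound at $z$). Using $\gamma\le 1/L$, this yields the key estimate
\begin{equation*}
f(x_{k+1})\le f(z)+\inner{G_\gamma(x_k)}{x_k-z}-\tfrac{\gamma}{2}\|G_\gamma(x_k)\|^2 \qquad \forall z\in\cX .
\end{equation*}
Choosing $z=x_k$ gives (i). Choosing $z=x_\star$ and expanding $\|x_{k+1}-x_\star\|^2=\|x_k-x_\star\|^2-2\gamma\inner{G_\gamma(x_k)}{x_k-x_\star}+\gamma^2\|G_\gamma(x_k)\|^2$ gives (ii).

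The function-value statements follow by telescoping. Summing (ii) over $i=0,\dots,k-1$ gives $\sum_{i=1}^{k}(f(x_i)-f(x_\star))\le \|x_0-x_\star\|^2/(2\gamma)$. By (i), $f(x_i)$ is nonincreasing, so $k(f(x_k)-f(x_\star))$ is bounded by that sum, which is (iii). Jensen on the same sum gives (vi). For (iv), sum (i) and use $f(x_k)\ge f(x_\star)$.

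For (v) and (vii) I want a Fejér decrease measured by the mapping or by gradient differences rather than by function values. I would use firm nonexpansiveness of $\Proj_{\cX}$ together with $x_\star=\Proj_{\cX}(x_\star-\gamma\nabla f(x_\star))$. Setting $u=x_k-\gamma\nabla f(x_k)$ and $v=x_\star-\gamma\nabla f(x_\star)$, firm nonexpansiveness gives
\[
\|x_{k+1}-x_\star\|^2\le \|u-v\|^2-\|(u-x_{k+1})-(v-x_\star)\|^2 .
\]
Now expand $\|u-v\|^2$ and apply cocoercivity, $\inner{\nabla f(x_k)-\nabla f(x_\star)}{x_k-x_\star}\ge \frac1L\|\nabla f(x_k)-\nabla f(x_\star)\|^2$. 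This gives
\[
\|u-v\|^2\le \|x_k-x_\star\|^2-\gamma(2/L-\gamma)\|\nabla f(x_k)-\nabla f(x_\star)\|^2 .
\]
Since $\gamma(2/L-\gamma)\ge\gamma^2$ when $\gamma\le 1/L$, telescoping yields (vii).

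For (v) I would instead keep the discarded term. Note that $(u-x_{k+1})-(v-x_\star)=\gamma G_\gamma(x_k)-\gamma(\nabla f(x_k)-\nabla f(x_\star))$. I then need to lower-bound
\[
\gamma(2/L-\gamma)\|\Delta_k\|^2+\gamma^2\|G_\gamma(x_k)-\Delta_k\|^2
\]
by $\gamma^2(1-\gamma L)\|G_\gamma(x_k)\|^2$, where $\Delta_k\eqdef\nabla f(x_k)-\nabla f(x_\star)$. Apply Young's inequality $\|a-b\|^2\ge(1-\epsilon)\|a\|^2-(1/\epsilon-1)\|b\|^2$ with $\epsilon=\gamma L$. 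The $\|\Delta_k\|^2$ coefficient becomes $\gamma(2/L-\gamma)-\gamma^2(1/(\gamma L)-1)=\gamma/L\ge0$, which leaves $\gamma^2(1-\gamma L)\|G_\gamma(x_k)\|^2$. Telescoping then gives (v).

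The main obstacle is this last constant-matching in (v): the weights have to be chosen so that the gradient-difference term is absorbed exactly. Everything else is standard. I would double-check that the strictness $\gamma<1/L$ is used only to make the final constant positive.
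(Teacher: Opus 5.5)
Your proof is correct. For (i)--(iv), (vi) and (vii) it follows the paper's argument. The paper derives (i) and (ii) separately from the projection inequality tested at $x_k$ and at $x_\star$, the descent lemma, and the convexity inequality; your $z$-parametrized estimate packages exactly these into one line. For (vii), both use nonexpansiveness plus cocoercivity.

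The genuine difference is (v). The paper uses neither firm nonexpansiveness nor Young's inequality there. It simply keeps the slack it discarded when proving (ii), namely
\[
\|x_{k+1}-x_\star\|^2 \le \|x_k-x_\star\|^2 - 2\gamma\bigl(f(x_{k+1})-f(x_\star)\bigr) - (1-\gamma L)\|x_{k+1}-x_k\|^2,
\]
and then drops the nonnegative gap $f(x_{k+1})-f(x_\star)$. In your notation, this is your key estimate at $z=x_\star$ before you weaken $\gamma(1-\tfrac{L\gamma}{2})\|G_\gamma(x_k)\|^2$ to $\tfrac{\gamma}{2}\|G_\gamma(x_k)\|^2$. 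So (v) was already one line away in your own framework.

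The paper's route is shorter and needs only the descent lemma and the gradient inequality of convexity along the iterates. Yours additionally needs firm nonexpansiveness and cocoercivity, whose standard derivation uses convexity and smoothness beyond $\cX$; the paper relies on cocoercivity only for (vii).

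In exchange, your constant matching is correct (the $\|\Delta_k\|^2$ coefficient is indeed $\gamma/L$, with $\Delta_k=\nabla f(x_k)-\nabla f(x_\star)$). It yields the stronger per-step bound
\[
\|x_{k+1}-x_\star\|^2 \le \|x_k-x_\star\|^2 - \tfrac{\gamma}{L}\|\Delta_k\|^2 - \gamma^2(1-\gamma L)\|G_\gamma(x_k)\|^2 .
\]
This controls both stationarity measures at once and collapses to the inequality behind (vii) when $\gamma=1/L$. As you suspected, strictness $\gamma<1/L$ is used only in the final division.
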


\begin{remark}  In the special case $\gamma=\nicefrac{1}{L}$, these bounds become
    \begin{align}
        f(x_k)-f(x_\star) &\leq \frac{L\|x_0-x_\star\|^2}{2k},\nonumber\\      
        \min_{0\le i\le k-1}\|G_{1/L}(x_i)\|^2 & \leq \frac{2L\bigl(f(x_0)-f(x_\star)\bigr)}{k}, \nonumber\\
        f(\bar x_k)-f(x_\star) &\leq \frac{L\|x_0-x_\star\|^2}{2k}, \nonumber\\
        \min_{0\le i\le k-1}\|\nabla f(x_i)-\nabla f(x_\star)\|^2 &\leq \frac{L^2\|x_0-x_\star\|^2}{k}.        \label{eq:PGD-important}
    \end{align}
\end{remark}

\begin{proof}
    Let $z_k\eqdef \Proj_{\cX}(x_k-\gamma \nabla f(x_k))$, so that $z_k=x_{k+1}$ and $G_\gamma(x_k)=\frac{1}{\gamma}(x_k-z_k)$.

    (i). By the $L$--smoothness of $f$,
    \[
        f(z_k)\le f(x_k)+\langle \nabla f(x_k),z_k-x_k\rangle+\frac{L}{2}\|z_k-x_k\|^2.
    \]
    Since $\gamma\le \nicefrac{1}{L}$, we have $\nicefrac{L}{2}\le \nicefrac{1}{2\gamma}$, hence
    \begin{align}
        \label{eq:q0000001}
        f(z_k)\le f(x_k)+\langle \nabla f(x_k),z_k-x_k\rangle+\frac{1}{2\gamma}\|z_k-x_k\|^2.
    \end{align}
    On the other hand, the optimality condition for projection gives
    \begin{align}
        \label{eq:q0000002}
        \left\langle x_k-\gamma \nabla f(x_k)-z_k, x-z_k\right\rangle\le 0,
        \qquad \forall x\in\cX.
    \end{align}
    Since $x_k\in\cX$, setting $x=x_k$ yields
    \[
        \left\langle x_k-\gamma \nabla f(x_k)-z_k,x_k-z_k\right\rangle\le 0.
    \]
    Expanding this inequality, we obtain $
        \langle \nabla f(x_k),z_k-x_k\rangle\le -\frac{1}{\gamma}\|z_k-x_k\|^2.
    $
    Substituting this into \eqref{eq:q0000001} gives
    \[
        f(z_k)\le f(x_k)-\frac{1}{\gamma}\|z_k-x_k\|^2+\frac{1}{2\gamma}\|z_k-x_k\|^2
        =f(x_k)-\frac{1}{2\gamma}\|z_k-x_k\|^2.
    \]
    Therefore,
    \[
        f(x_{k+1})=f(z_k)\le f(x_k)-\frac{1}{2\gamma}\|z_k-x_k\|^2
        =f(x_k)-\frac{\gamma}{2}\|G_\gamma(x_k)\|^2.
    \]

    (ii). 
    Setting $x=x_\star$ in \eqref{eq:q0000002}, we obtain
    \[
        \langle x_k-x_{k+1},x_\star-x_{k+1}\rangle
        \le
        \gamma \langle \nabla f(x_k),x_\star-x_{k+1}\rangle.
    \]
    Using the identity
    \[
        2\langle a-b,c-b\rangle = \|a-b\|^2+\|c-b\|^2-\|a-c\|^2
    \]
    with $a=x_k$, $b=x_{k+1}$, and $c=x_\star$, we get
    \begin{align}
        \label{eq:q0000003}
        \frac{1}{2\gamma}\Bigl(\|x_k-x_\star\|^2-\|x_{k+1}-x_\star\|^2-\|x_{k+1}-x_k\|^2\Bigr) \ge \langle \nabla f(x_k),x_{k+1}-x_\star\rangle.
    \end{align}
    By convexity, $
        f(x_\star)\ge f(x_k)+\langle \nabla f(x_k),x_\star-x_k\rangle,
    $
    hence
    \begin{align}
        \label{eq:q0000004}
        f(x_k)-f(x_\star)\le \langle \nabla f(x_k),x_k-x_\star\rangle.
    \end{align}
    Also, by $L$--smoothness,
    \begin{align}
        \label{eq:q0000005}
        f(x_{k+1})\le f(x_k)+\langle \nabla f(x_k),x_{k+1}-x_k\rangle+\frac{L}{2}\|x_{k+1}-x_k\|^2.
    \end{align}
        
    Subtracting $f(x_\star)$ from both sideds of \eqref{eq:q0000005} and combining if with \eqref{eq:q0000004},
    \begin{align}
        \label{eq:q00000041}
        f(x_{k+1})-f(x_\star) \le \langle \nabla f(x_k),x_{k+1}-x_\star\rangle+\frac{L}{2}\|x_{k+1}-x_k\|^2.
    \end{align}
    Since $\gamma\le \nicefrac{1}{L}$, we have $\frac{L}{2}\le \frac{1}{2\gamma}$, so
    \[
        f(x_{k+1})-f(x_\star) \le \langle \nabla f(x_k),x_{k+1}-x_\star\rangle+\frac{1}{2\gamma}\|x_{k+1}-x_k\|^2.
    \]
    Therefore, using \eqref{eq:q0000003}
    \[
        f(x_{k+1})-f(x_\star) \le \frac{1}{2\gamma}\Bigl(\|x_k-x_\star\|^2-\|x_{k+1}-x_\star\|^2\Bigr),
    \]
    which is equivalent to
    \begin{align}
        \label{eq:q0000007}
        \|x_{k+1}-x_\star\|^2 \le \|x_k-x_\star\|^2-2\gamma\bigl(f(x_{k+1})-f(x_\star)\bigr).
    \end{align}

    (iii). Summing \eqref{eq:q0000007} from $i=0$ to $k-1$, we get
    \[
        2\gamma \sum_{i=0}^{k-1}\bigl(f(x_{i+1})-f(x_\star)\bigr) \le \|x_0-x_\star\|^2.
    \]
    Since $f(x_i)$ is nonincreasing by \eqref{eq:non-inc-0001}, we have
    \[
        k\bigl(f(x_k)-f(x_\star)\bigr) \le \sum_{i=0}^{k-1}\bigl(f(x_{i+1})-f(x_\star)\bigr).
    \]
    Therefore, $f(x_k)-f(x_\star) \le\frac{\|x_0-x_\star\|^2}{2\gamma k}$.

    (iv). Summing the descent inequality \eqref{eq:non-inc-0001} for $i=0,\dots,k-1$, we obtain
    \[
        \frac{\gamma}{2}\sum_{i=0}^{k-1}\|G_\gamma(x_i)\|^2 \le f(x_0)-f(x_k) \le f(x_0)-f(x_\star).
    \]
    Hence \[\sum_{i=0}^{k-1}\|G_\gamma(x_i)\|^2 \le
    \frac{2\bigl(f(x_0)-f(x_\star)\bigr)}{\gamma}.\] Dividing by $k$ and noticing that the minimum is upper bounded by the average yields the desired result

    (v). 
    Combining \eqref{eq:q0000003} with \eqref{eq:q00000041}, 
    \[
        f(x_{k+1})-f(x_\star) \le
        \frac{1}{2\gamma}\Bigl(\|x_k-x_\star\|^2-\|x_{k+1}-x_\star\|^2\Bigr) - \Bigl(\frac{1}{2\gamma}-\frac{L}{2}\Bigr)\|x_{k+1}-x_k\|^2.
    \]
    Equivalently,
    \[
        \|x_{k+1}-x_\star\|^2 \le \|x_k-x_\star\|^2 - 2\gamma\bigl(f(x_{k+1})-f(x_\star)\bigr) - (1-\gamma L)\|x_{k+1}-x_k\|^2.
    \]
    If $0< \gamma< \nicefrac1L$, then $1- \gamma L>0$, and therefore
    \[
        \|x_{k+1}-x_\star\|^2 \le \|x_k-x_\star\|^2-(1-\gamma L)\|x_{k+1}-x_k\|^2.
    \]
    Since $x_{k+1}-x_k=-\gamma G_\gamma(x_k)$, we get
    \[
        \|x_{k+1}-x_\star\|^2 \le \|x_k-x_\star\|^2-(1-\gamma L)\gamma^2\|G_\gamma(x_k)\|^2.
    \]
    Summing for $i=0,\dots,k-1$, we obtain
    \[
        (1-\gamma L)\gamma^2\sum_{i=0}^{k-1}\|G_\gamma(x_i)\|^2 \le \|x_0-x_\star\|^2,
    \]
    and dividing by $k$ yields
    \[
        \min_{0\le i\le k-1}\|G_\gamma(x_i)\|^2 \le \frac{\|x_0-x_\star\|^2}{\gamma^2(1-\gamma L)k}.
    \]

    (vi). By convexity, $f(\bar x_k)\le \frac1k\sum_{i=1}^{k}f(x_i)$.
    Hence
    \[
        f(\bar x_k)-f(x_\star) \le \frac1k\sum_{i=1}^{k}\bigl(f(x_i)-f(x_\star)\bigr) = \frac1k\sum_{i=0}^{k-1}\bigl(f(x_{i+1})-f(x_\star)\bigr) \le \frac{\|x_0-x_\star\|^2}{2\gamma k}.
    \]

    (vii). Since $x_\star$ minimizes $f$ over the closed convex set $\cX$, the first-order optimality condition implies
    \[
        \langle \nabla f(x_\star),x-x_\star\rangle \ge 0,
        \qquad \forall x\in\cX.
    \]
    Equivalently, $x_\star=\Proj_{\cX}\bigl(x_\star-\gamma \nabla f(x_\star)\bigr)$.
    Using the nonexpansiveness of the projection, we get
    \[
        \|x_{i+1}-x_\star\|^2 \le \|x_i-x_\star-\gamma(\nabla f(x_i)-\nabla f(x_\star))\|^2.
    \]
    Expanding,
    \[
        \|x_{i+1}-x_\star\|^2 \le \|x_i-x_\star\|^2 -2\gamma \langle \nabla f(x_i)-\nabla f(x_\star),x_i-x_\star\rangle +\gamma^2\|\nabla f(x_i)-\nabla f(x_\star)\|^2.
    \]
    Since $f$ is convex and $L$--smooth, 
    \[
        \frac1L\|\nabla f(x)-\nabla f(y)\|^2 \le \langle \nabla f(x)-\nabla f(y),x-y\rangle, \qquad \forall x,y\in\R^d.
    \]
    Because $\gamma \le \nicefrac1L$, it follows that
    \[
        \gamma\|\nabla f(x_i)-\nabla f(x_\star)\|^2 \le \langle \nabla f(x_i)-\nabla f(x_\star),x_i-x_\star\rangle.
    \]
    Hence,
    \[
        \|x_{i+1}-x_\star\|^2 \le \|x_i-x_\star\|^2-\gamma^2\|\nabla f(x_i)-\nabla f(x_\star)\|^2.
    \]
    Summing for $i=0,\dots,k-1$, and dividing by $k$,
    \[
        \min_{0\le i\le k-1}\|\nabla f(x_i)-\nabla f(x_\star)\|^2\le\frac{\|x_0-x_\star\|^2}{\gamma^2 k}. \qedhere
    \]
\end{proof}

\begin{remark}
    Most of the results above are known and can be found in prior literature \citep{rosen1960gradient,levitin1966constrained,bertsekas1999nonlinear,Beck2017First}.
    Among these bounds, the most standard headline results are the $\cO(\nicefrac{1}{K})$ function-value estimate in part {\em (iii)} and the $\cO(\nicefrac{1}{K})$ projected-gradient estimate in part {\em (iv)}. In constrained optimization, $\|G_\gamma(x)\|$ is the canonical stationarity measure; it vanishes if and only if $x$ is a solution of the variational inequality $0\in \nabla f(x)+N_{\cX}(x)$, or equivalently, if and only if $x$ is a minimizer of $f$ over $\cX$.
\end{remark}

\subsection{A negative result for Projected Gradient Descent}
\label{sec:pgd_negative}

We now show that the quadratic dependence on \(L\) in the gradient-difference bound
\[
\min_{0\le i\le k-1}\|\nabla f(x_i)-\nabla f(x_\star)\|^2
\le
\frac{L^2\|x_0-x_\star\|^2}{k}
\]
cannot, in general, be improved to a linear dependence on \(L\).
More precisely, we prove that there is no universal constant \(C>0\) such that, for every \(L\)--smooth convex constrained problem, the iterates of Projected Gradient Descent satisfy
\begin{equation}
\label{eq:pgd_lin}
\min_{0\le i\le k-1}\|\nabla f(x_i)-\nabla f(x_\star)\|^2
\le
\frac{C L\|x_0-x_\star\|^2}{k}.
\end{equation}

\paragraph{Construction.}
Let the feasible set be $\cX \eqdef \R\times \R_+$, and consider the family of convex quadratic functions
\[
f_\alpha(u,v)
\eqdef
\frac{\alpha}{4}u^2+\alpha uv+\alpha v^2+v
= \frac{\alpha}{4}(u+2v)^2+v,
\qquad \alpha>0.
\]
The gradient and Hessian are
\[
\nabla f_\alpha(u,v)=
\begin{pmatrix}
\frac{\alpha}{2}u+\alpha v\\[0.4em]
\alpha u+2\alpha v+1
\end{pmatrix},
\qquad
\nabla^2 f_\alpha=
\begin{pmatrix}
\alpha/2 & \alpha\\
\alpha & 2\alpha
\end{pmatrix}.
\]
The Hessian has eigenvalues \(0\) and \(\frac{5\alpha}{2}\). Hence \(f_\alpha\) is convex and \(L\)--smooth with
\[
L=\frac{5\alpha}{2}.
\]

We claim that the constrained minimizer is $x_\star=(0,0)$.
Indeed,
\[
f_\alpha(u,v)=\frac{\alpha}{4}(u+2v)^2+v.
\]
Since \(v\ge 0\) for all \((u,v)\in \cX\), and the quadratic term is always nonnegative, we have
\[
f_\alpha(u,v)\ge 0
\qquad \forall (u,v)\in\cX.
\]
Moreover, $f_\alpha(0,0)=0$, so \((0,0)\) is a constrained minimizer.

\paragraph{PGD with the standard stepsize.}
Let $x_0=(1,0)\in \cX$ and run Projected Gradient Descent with the standard stepsize
\[
\gamma=\frac{1}{L}=\frac{2}{5\alpha}.
\]
We first compute the gradient at \(x_0\):
\[
\nabla f_\alpha(x_0)
=
\nabla f_\alpha(1,0)
=
\begin{pmatrix}
\alpha/2\\
\alpha+1
\end{pmatrix}.
\]
Hence the unconstrained gradient step is
\[
x_0-\gamma \nabla f_\alpha(x_0)
=
\left(
1-\frac{\gamma\alpha}{2},
-\gamma(\alpha+1)
\right).
\]
Since the second component is negative, projection onto \(\cX=\R\times \R_+\) sets it to zero, and therefore
\[
x_1
=
\Proj_{\cX}\bigl(x_0-\gamma \nabla f_\alpha(x_0)\bigr)
=
\left(1-\frac{\gamma\alpha}{2},\,0\right).
\]
Using \(\gamma=\frac{2}{5\alpha}\), we get
\[
\frac{\gamma\alpha}{2}=\frac{1}{5},
\qquad\text{so}\qquad
x_1=\left(\frac{4}{5},0\right).
\]

\paragraph{Gradient differences.}
We next compute the gradients at \(x_\star\), \(x_0\), and \(x_1\).
At the constrained minimizer,
\[
\nabla f_\alpha(x_\star)
=
\nabla f_\alpha(0,0)
=
\begin{pmatrix}
0\\
1
\end{pmatrix}.
\]
Therefore
\[
\nabla f_\alpha(x_0)-\nabla f_\alpha(x_\star)
=
\begin{pmatrix}
\alpha/2\\
\alpha
\end{pmatrix},
\]
and thus
\[
\|\nabla f_\alpha(x_0)-\nabla f_\alpha(x_\star)\|^2
=
\left(\frac{\alpha}{2}\right)^2+\alpha^2
=
\frac{5\alpha^2}{4}.
\]

Now evaluate the gradient at \(x_1=(4/5,0)\):
\[
\nabla f_\alpha(x_1)
=
\begin{pmatrix}
\frac{\alpha}{2}\cdot \frac{4}{5}\\[0.3em]
\alpha\cdot \frac{4}{5}+1
\end{pmatrix}
=
\begin{pmatrix}
2\alpha/5\\[0.3em]
4\alpha/5+1
\end{pmatrix}.
\]
Hence
\[
\nabla f_\alpha(x_1)-\nabla f_\alpha(x_\star)
=
\begin{pmatrix}
2\alpha/5\\[0.3em]
4\alpha/5
\end{pmatrix},
\]
and so
\[
\|\nabla f_\alpha(x_1)-\nabla f_\alpha(x_\star)\|^2
=
\left(\frac{2\alpha}{5}\right)^2+\left(\frac{4\alpha}{5}\right)^2
=
\frac{4\alpha^2}{25}+\frac{16\alpha^2}{25}
=
\frac{4\alpha^2}{5}.
\]

Therefore, for \(k=2\),
\[
\min_{0\le i\le 1}\|\nabla f_\alpha(x_i)-\nabla f_\alpha(x_\star)\|^2
=
\frac{4\alpha^2}{5}.
\]

\paragraph{Contradiction with a hypothetical linear-in-\(L\) bound.}
Now suppose that there exists a universal constant \(C>0\) such that \eqref{eq:pgd_lin} holds for every \(L\)--smooth convex constrained problem.
Applying \eqref{eq:pgd_lin} to the present example with \(k=2\), we would obtain
\[
\min_{0\le i\le 1}\|\nabla f_\alpha(x_i)-\nabla f_\alpha(x_\star)\|^2
\le
\frac{C L\|x_0-x_\star\|^2}{2}.
\]
Since $L=\nicefrac{5\alpha}{2}$ and $\|x_0-x_\star\|^2=\|(1,0)-(0,0)\|^2=1$, the right-hand side becomes
\[
\frac{C L\|x_0-x_\star\|^2}{2}
=
\frac{C}{2}\cdot \frac{5\alpha}{2}
=
\frac{5C\alpha}{4}.
\]
But we already computed that
\[
\min_{0\le i\le 1}\|\nabla f_\alpha(x_i)-\nabla f_\alpha(x_\star)\|^2
=
\frac{4\alpha^2}{5}.
\]
Hence \eqref{eq:pgd_lin} would force
\[
\frac{4\alpha^2}{5}\le \frac{5C\alpha}{4}.
\]
Equivalently,
\[
\alpha \le \frac{25}{16}C.
\]
Therefore, if we choose
\(
\alpha>\frac{25}{16}C,
\)
then \eqref{eq:pgd_lin} fails.

Since \(C\) was arbitrary, no universal constant \(C\) can make \eqref{eq:pgd_lin} valid for all \(L\)--smooth convex constrained problems.
We have thus shown that there is no universal constant \(C>0\) such that
\[
\min_{0\le i\le k-1}\|\nabla f(x_i)-\nabla f(x_\star)\|^2
\le
\frac{C\,L\|x_0-x_\star\|^2}{k}
\]
holds for all \(L\)--smooth convex constrained problems.
Thus, the quadratic dependence on \(L\) in the bound
\[
\min_{0\le i\le k-1}\|\nabla f(x_i)-\nabla f(x_\star)\|^2
\le
\frac{L^2\|x_0-x_\star\|^2}{k}
\]
is, in general, unavoidable.

\subsection{$(L_0,L_1)$--smooth convex case}

\begin{proposition}[\algname{PGD} under asymmetric $(L_0,L_1)$--smoothness]
    \label{cor:pgd_asym_L0L1}
    Let $\cX \subseteq \R^d$ be a nonempty closed convex set. 
    Let $f:\R^d \to \R$ be convex and  asymmetrically $(L_0,L_1)$-smooth (\Cref{asp:l0l1}) on a open set containing $\cX$.
    Let $\{x_k\}_{k\ge 0}$ be generated by the \algname{PGD} iteration
    \begin{align*}
        x_{k+1} = \Proj_{\cX}\left(x_k - \gamma_k \nabla f(x_k)\right),
    \end{align*}
    where $\gamma_k > 0$.
    If
    \begin{align}\label{eq:oiwnfd}
        \gamma_k \le \frac{1}{2}\left( \frac{1}{L_0 + L_1 \|\nabla f(x_k)\|} + \frac{1}{L_0 + L_1 \|\nabla f(x_\star)\|},
        \right)
    \end{align}
    then for every $K \ge 1$
    \begin{align*}
        \min_{0 \le k \le K-1} \|\nabla f (x_k) - \nabla f(x_\star)\|^2 \le \frac{\|x_0 - x_\star\|^2}{\sum_{k=0}^{K-1}\gamma_k^2}.
    \end{align*}
    In particular, for
    \begin{align}\label{eq:baosfbnO}
        \gamma_k = \frac{1}{2}\left( \frac{1}{L_0 + L_1 \|\nabla f(x_k)\|} + \frac{1}{L_0 + L_1 \|\nabla f(x_\star)\|} \right),
    \end{align}
    the iterates satisfy
    \begin{align*}
        \min_{0 \le k \le K-1} \left( \frac{\|\nabla f(x_k) - \nabla f(x_\star)\|}{L_0 + L_1 \|\nabla f(x_k)\|} + \frac{\|\nabla f(x_k) - \nabla f(x_\star)\|}{L_0 + L_1 \|\nabla f(x_\star)\|}
        \right)^2 \le \frac{4 \|x_0 - x_\star\|^2}{K}.
    \end{align*}
    Moreover, if $\sqrt{K} > L_1 \|x_0 - x_\star\|$, then
    \begin{align*}
        \min_{0 \le k \le K-1} \|\nabla f(x_k) - \nabla f(x_\star)\| \le \frac{\left(L_0 + L_1 \|\nabla f(x_\star)\|\right)\|x_0 - x_\star\|}{\sqrt{K} - L_1 \|x_0 - x_\star\|},
    \end{align*}
    and if $K \ge 4L_1^2 \|x_0 - x_\star\|^2$, then
    \begin{align*}
        \min_{0 \le k \le K-1} \|\nabla f(x_k) - \nabla f(x_\star)\|^2 \le \frac{4\left(L_0 + L_1 \|\nabla f(x_\star)\|\right)^2 \|x_0 - x_\star\|^2}{K}.
    \end{align*}
\end{proposition}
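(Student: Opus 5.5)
The plan is to mimic the proof of part (vii) of \Cref{prop:pgd_rates}: a Fej\'er-type recursion obtained from nonexpansiveness of the projection plus a cocoercivity inequality. The only change is that the constant $1/L$ is replaced by the state-dependent coefficient
\[
a_k \eqdef \tfrac12\Bigl(\tfrac{1}{L_0+L_1\|\nabla f(x_k)\|}+\tfrac{1}{L_0+L_1\|\nabla f(x_\star)\|}\Bigr).
\]
The key ingredient I would invoke is the cocoercivity-type inequality for convex, asymmetrically $(L_0,L_1)$-smooth functions (\Cref{asp:l0l1}):
\[
\langle \nabla f(x)-\nabla f(y),x-y\rangle \ge \tfrac12\Bigl(\tfrac{1}{L_0+L_1\|\nabla f(x)\|}+\tfrac{1}{L_0+L_1\|\nabla f(y)\|}\Bigr)\|\nabla f(x)-\nabla f(y)\|^2 .
\]
I expect this to be stated in \Cref{sec:L0L1case}, where it is also what makes the radius in \Cref{tbl:main} Type-II admissible. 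If it is not available there, I would derive it as the main obstacle. The route would be to take the one-sided upper bound $f(y)\le f(x)+\langle\nabla f(x),y-x\rangle+\frac{L_0+L_1\|\nabla f(x)\|}{2}\|y-x\|^2$ in a suitable local region. I would apply it to $\phi_x(y)=f(y)-\langle\nabla f(x),y\rangle$ at the point $y-\frac{1}{L_0+L_1\|\nabla f(y)\|}\nabla\phi_x(y)$ to get $f(y)-f(x)-\langle\nabla f(x),y-x\rangle\ge \frac{\|\nabla f(x)-\nabla f(y)\|^2}{2(L_0+L_1\|\nabla f(y)\|)}$. Symmetrizing in $x,y$ and adding the two inequalities then yields the averaged coefficient.

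The main recursion goes as follows. Since $x_\star=\Proj_{\cX}(x_\star-\gamma_k\nabla f(x_\star))$ for every $\gamma_k>0$ by first-order optimality, nonexpansiveness gives
\[
\|x_{k+1}-x_\star\|^2\le\|x_k-x_\star\|^2-2\gamma_k\langle\Delta_k^{\mathrm{v}},x_k-x_\star\rangle+\gamma_k^2\|\Delta_k^{\mathrm{v}}\|^2,
\]
with $\Delta_k^{\mathrm{v}}=\nabla f(x_k)-\nabla f(x_\star)$. By \eqref{eq:oiwnfd} and the cocoercivity inequality, $\gamma_k\|\Delta_k^{\mathrm{v}}\|^2\le a_k\|\Delta_k^{\mathrm{v}}\|^2\le\langle\Delta_k^{\mathrm{v}},x_k-x_\star\rangle$. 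Hence $\|x_{k+1}-x_\star\|^2\le\|x_k-x_\star\|^2-\gamma_k^2\|\Delta_k^{\mathrm{v}}\|^2$. Telescoping over $k=0,\dots,K-1$ gives $\sum_k\gamma_k^2\Delta_k^2\le R_0^2$, and bounding each $\Delta_k^2$ below by the minimum proves the first claim. With the equality choice \eqref{eq:baosfbnO} one has $\gamma_k^2\Delta_k^2=\frac14\bigl(\Delta_k/c_k+\Delta_k/c_\star\bigr)^2$, so the same telescoped sum, divided by $K$, gives the second claim.

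For the explicit rates, let $k$ be the index attaining the minimum in the second claim. By the triangle inequality $\|\nabla f(x_k)\|\le\|\nabla f(x_\star)\|+\Delta_k$, so $c_k\le c_\star+L_1\Delta_k$; trivially also $c_\star\le c_\star+L_1\Delta_k$. Therefore
\[
\frac{\Delta_k}{c_k}+\frac{\Delta_k}{c_\star}\ge\frac{2\Delta_k}{c_\star+L_1\Delta_k}.
\]
Combining with the second claim gives $\frac{\Delta_k}{c_\star+L_1\Delta_k}\le\frac{R_0}{\sqrt K}$, i.e.\ $\Delta_k(\sqrt K-L_1R_0)\le c_\star R_0$. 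This yields the third claim whenever $\sqrt K>L_1R_0$, since $\min_j\Delta_j\le\Delta_k$. Finally, if $K\ge4L_1^2R_0^2$ then $\sqrt K-L_1R_0\ge\sqrt K/2$, and squaring gives the last bound. The only genuinely delicate step is the cocoercivity inequality; everything else is the routine argument of part (vii).
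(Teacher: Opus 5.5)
Your proposal is correct and follows the paper's proof essentially step for step. The paper likewise combines nonexpansiveness of the projection at the fixed point $x_\star=\Proj_{\cX}(x_\star-\gamma_k\nabla f(x_\star))$ with the cocoercivity-type inequality of \Cref{lemma:l0l1} to obtain $\|x_{k+1}-x_\star\|^2\le\|x_k-x_\star\|^2-\gamma_k^2\|\nabla f(x_k)-\nabla f(x_\star)\|^2$, telescopes this, and defers the explicit rates to the same triangle-inequality argument used in the proof of \Cref{thm:asym_L0L1_rate}; the paper simply cites that lemma rather than deriving it via $\phi_x$ as you sketch.
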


\begin{proof}
    Denote $\Delta_k := \nabla f(x_k) - \nabla f(x_\star)$.
    Since $x_\star$ is a minimizer of $f$ over $\cX$, for every $\gamma_k > 0$ we have $x_\star = \Proj_{\cX}\left(x_\star - \gamma_k \nabla f(x_\star)\right)$ (Refer to the proof of \Cref{prop:pgd_rates} (vii)).
    Therefore, by the nonexpansiveness of the projection,
    \begin{align*}
        \|x_{k+1} - x_\star\|^2 &= \left\| \Proj_{\cX}\left(x_k - \gamma_k \nabla f(x_k)\right) - \Proj_{\cX}\left(x_\star - \gamma_k \nabla f(x_\star)\right) \right\|^2 \\ 
        &\le \left\| x_k - x_\star - \gamma_k \left(\nabla f(x_k) - \nabla f(x_\star)\right) \right\|^2 \\
        & =\|x_k - x_\star\|^2 - 2\gamma_k \langle \Delta_k, x_k - x_\star \rangle + \gamma_k^2 \|\Delta_k\|^2.
    \end{align*}
    Next, by \Cref{lemma:l0l1}
    \begin{align*}
        &\frac{1}{2} \left(\frac{\|\Delta_k\|^2}{L_0 + L_1 \|\nabla f(x_k)\|} + \frac{\|\Delta_k\|^2}{L_0 + L_1 \|\nabla f(x_\star)\|} \right) \le \langle \Delta_k, x_k - x_\star \rangle.
    \end{align*}
    Hence, under the choice \eqref{eq:oiwnfd},
    \begin{align*}
        \gamma_k \|\Delta_k\|^2 \le \frac{1}{2}\left(\frac{\|\Delta_k\|^2}{L_0 + L_1 \|\nabla f(x_k)\|}+\frac{\|\Delta_k\|^2}{L_0 + L_1 \|\nabla f(x_\star)\|}\right)\le \langle \Delta_k, x_k - x_\star \rangle.
    \end{align*}
    Substituting this into the previous bound yields
    \begin{align*}
        \|x_{k+1} - x_\star\|^2 \le \|x_k - x_\star\|^2 - \gamma_k^2 \|\Delta_k\|^2.
    \end{align*}
    Summing for $k=0,1,\dots,K-1$ and dropping the nonnegative term $\|x_K - x_\star\|^2$ gives
    \begin{align*}
        \sum_{k=0}^{K-1} \gamma_k^2 \|\nabla f(x_k) - \nabla f(x_\star)\|^2
        \le \|x_0 - x_\star\|^2,
    \end{align*}
    and hence
    \begin{align*}
        &\left(\min_{0 \le k \le K-1} \|\nabla f(x_k) - \nabla f(x_\star)\|^2\right)\sum_{k=0}^{K-1}\gamma_k^2 \le \sum_{k=0}^{K-1}\gamma_k^2 \|\nabla f(x_k) - \nabla f(x_\star)\|^2 \le \|x_0 - x_\star\|^2.
    \end{align*}
    Therefore
    \begin{align*}
        &\min_{0 \le k \le K-1} \|\nabla f(x_k) - \nabla f(x_\star)\|^2 \le \frac{\|x_0 - x_\star\|^2}{\sum_{k=0}^{K-1}\gamma_k^2},
    \end{align*}
    proving the first part of the statement.
    Now, choosing $\gamma_k$ as in \eqref{eq:baosfbnO}, the bound above becomes
    \begin{align*}
        &\sum_{k=0}^{K-1} \frac{1}{4} \left( \frac{\|\nabla f(x_k) - \nabla f(x_\star)\|}{L_0 + L_1 \|\nabla f(x_k)\|} + \frac{\|\nabla f(x_k) - \nabla f(x_\star)\|}{L_0 + L_1 \|\nabla f(x_\star)\|} \right)^2 \le \|x_0 - x_\star\|^2.
    \end{align*}
    The rest of the proof follows directly from the proof of \Cref{thm:asym_L0L1_rate}.
\end{proof}

\subsection{Non-convex case}
\label{sec:pgd_nonconvex}

Projected gradient descent also admits convergence guarantees in the non-convex setting. 
The following result is obtained by specializing the proximal-gradient convergence theorem of \citet[Theorem~10.15]{Beck2017First}.
\begin{proposition}[\algname{PGD} in the smooth non-convex case]
    \label{prop:pgd_nonconvex}
    Let $\cX \subseteq \R^d$ be a nonempty closed convex set. 
    Let $f:\R^d \to \R$ be $L$--smooth on an open set containing $\cX$, and assume that $f_\star \eqdef \min_{x \in \cX} f(x)$ is finite. 
    Let $\{x_k\}_{k \ge 0}$ be generated by the \algname{PGD} iteration
    \begin{align*}
        x_{k+1} = \Proj_{\cX}\rbrac{x_k - \gamma \nabla f(x_k)},
    \end{align*}
    where $0 < \gamma \le \nicefrac{1}{L}$, and define the projected gradient mapping by
    \begin{align*}
        G_{\gamma}(x) = \frac{1}{\gamma}\rbrac{x - \Proj_{\cX}\rbrac{x - \gamma \nabla f(x)}}.
    \end{align*}
    Then, for every $K \ge 1$,
    \begin{align*}
        \min_{0 \le k \le K-1}\norm{G_{\gamma}(x_k)}^2 \le \frac{f(x_0)-f_\star}{\gamma\rbrac{1-\frac{L\gamma}{2}}\,K}.
    \end{align*}
    In particular, for $\gamma = \frac{1}{L}$,
    \begin{align*}
        \min_{0 \le k \le K-1}\norm{G_{1/L}(x_k)}^2 \le \frac{2L\rbrac{f(x_0)-f_\star}}{K}.
    \end{align*}
    Moreover, $G_{\gamma}(x_k) \to 0$ as $k \to \infty$. Consequently, every accumulation point of $\{x_k\}$ is a first-order stationary point.
\end{proposition}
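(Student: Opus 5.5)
The plan is the standard descent-lemma argument for the proximal-gradient method, specialized to $g=\iota_{\cX}$ and carried out without convexity of $f$. Throughout, write $z_k\eqdef \Proj_{\cX}(x_k-\gamma\nabla f(x_k))=x_{k+1}$, so that $x_{k+1}-x_k=-\gamma G_\gamma(x_k)$.

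\textbf{Step 1: a sufficient-decrease inequality.} Apply the variational characterization of the projection,
\[
\langle x_k-\gamma\nabla f(x_k)-z_k,\,x-z_k\rangle\le 0 \quad \forall x\in\cX,
\]
with $x=x_k\in\cX$. This gives
\[
\langle \nabla f(x_k),\,x_{k+1}-x_k\rangle\le -\tfrac1\gamma\|x_{k+1}-x_k\|^2 .
\]
This step uses only convexity of $\cX$, not of $f$. Next use the descent lemma. It holds on the open set containing $\cX$, and the segment $[x_k,x_{k+1}]\subseteq\cX$ by convexity of $\cX$. It gives
\[
f(x_{k+1})\le f(x_k)+\langle\nabla f(x_k),x_{k+1}-x_k\rangle+\tfrac L2\|x_{k+1}-x_k\|^2 .
\]
Combining the two inequalities yields
\[
f(x_{k+1})\le f(x_k)-\Bigl(\tfrac1\gamma-\tfrac L2\Bigr)\|x_{k+1}-x_k\|^2
= f(x_k)-\gamma\Bigl(1-\tfrac{L\gamma}{2}\Bigr)\|G_\gamma(x_k)\|^2 .
\]
The coefficient $\gamma(1-L\gamma/2)$ is positive, since $\gamma\le 1/L$ implies $1-L\gamma/2\ge 1/2$.

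\textbf{Step 2: telescoping.} Sum the inequality from Step 1 over $k=0,\dots,K-1$. Since every iterate lies in $\cX$, we have $f(x_K)\ge f_\star$, and therefore
\[
\gamma\Bigl(1-\tfrac{L\gamma}2\Bigr)\sum_{k=0}^{K-1}\|G_\gamma(x_k)\|^2\le f(x_0)-f_\star .
\]
Bounding the minimum by the average gives the first claim. Setting $\gamma=1/L$ makes the coefficient equal to $1/(2L)$, which gives the bound $2L(f(x_0)-f_\star)/K$.

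\textbf{Step 3: vanishing of $G_\gamma$.} Letting $K\to\infty$ in Step 2 shows that $\sum_k\|G_\gamma(x_k)\|^2<\infty$, so $G_\gamma(x_k)\to 0$.

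\textbf{Step 4: accumulation points.} Let $x_{k_j}\to\bar x$. Then $\bar x\in\cX$ because $\cX$ is closed. The map $x\mapsto G_\gamma(x)$ is continuous, because $\nabla f$ is continuous and $\Proj_{\cX}$ is nonexpansive. Hence $G_\gamma(\bar x)=0$, that is, $\bar x=\Proj_{\cX}(\bar x-\gamma\nabla f(\bar x))$. This is equivalent to $\langle\nabla f(\bar x),x-\bar x\rangle\ge0$ for all $x\in\cX$, i.e. $0\in\nabla f(\bar x)+N_{\cX}(\bar x)$, which is first-order stationarity.

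\textbf{Main point of care.} The only delicate step is Step 1. The argument must avoid any use of convexity of $f$, relying on the projection inequality tested at $x_k$ rather than at $x_\star$. It must also ensure the descent lemma is applied only on the region where $L$-smoothness holds, which the convexity of $\cX$ guarantees. Everything else is routine.
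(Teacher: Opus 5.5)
Your proof is correct and follows essentially the same route as the paper. The paper specializes Beck's Theorem~10.15 to $g=\iota_{\cX}$ with $\bar L=1/\gamma$, so that $M=(\bar L-L/2)/\bar L^2=\gamma(1-L\gamma/2)$, and that theorem rests on exactly your argument: the projection inequality tested at $x_k$ plus the descent lemma give sufficient decrease, followed by telescoping and a continuity argument for limit points.

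One point worth making explicit: Step~1 at $k=0$ and the bound stated in terms of $f(x_0)-f_\star$ both require $x_0\in\cX$. Without it, $f(x_0)<f_\star$ is possible and the inequality can fail, and the statement leaves this assumption implicit.
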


\begin{proof}
    The result is an immediate specialization of \citet[Theorem~10.15]{Beck2017First} to the composite problem
    \begin{align*}
        F(x) = f(x) + \iota_{\cX}(x),
    \end{align*}
    where $\iota_{\cX}$ denotes the indicator function of $\cX$. In \cite{Beck2017First}'s notation, the non-smooth term is $g(x) \equiv \iota_{\cX}(x)$,
    and the proximal-gradient update with constant parameter $\bar L \ge L$ is
    \begin{align*}
        x_{k+1} = \prox_{\iota_{\cX}/\bar L}\rbrac{x_k - \frac{1}{\bar L}\nabla f(x_k)}
        = \Proj_{\cX}\rbrac{x_k - \frac{1}{\bar L}\nabla f(x_k)}.
    \end{align*}
    Choosing $\bar L = \nicefrac{1}{\gamma}$, we recover exactly the \algname{PGD} iteration.
    Notice that gradient mapping in \cite{Beck2017First} is
    \begin{align*}
        G_{\bar L}(x) = \bar L \rbrac{x - \Proj_{\cX}\rbrac{x - \frac{1}{\bar L}\nabla f(x)}} = \frac{1}{\gamma}\rbrac{x - \Proj_{\cX}\rbrac{x - \gamma \nabla f(x)}} = G_\gamma(x).
    \end{align*}
    Therefore, \cite[Theorem~10.15]{Beck2017First} yields $G_\gamma(x_k) \to 0$, every accumulation point of $\{x_k\}$ is stationary, and
    \begin{align*}
        \min_{0 \le k \le K-1}\norm{G_\gamma(x_k)}^2 \le \frac{f(x_0)-f_\star}{M K},
    \end{align*}
    where, in the constant-stepsize case,
    $
        M = \frac{\bar L - \nicefrac{L}{2}}{\bar L^2}.
    $
    Substituting $\bar L = \nicefrac{1}{\gamma}$ gives $M = \gamma\rbrac{1-\nicefrac{L\gamma}{2}}$,
    and hence
    \begin{align*}
        \min_{0 \le k \le K-1}\norm{G_{\gamma}(x_k)}^2 \le \frac{f(x_0)-f_\star}{\gamma\rbrac{1-\frac{L\gamma}{2}}\,K}.
    \end{align*}
    Setting $\gamma = \nicefrac{1}{L}$ yields
    \begin{align*}
        \min_{0 \le k \le K-1}\norm{G_{1/L}(x_k)}^2 \le \frac{2L\rbrac{f(x_0)-f_\star}}{K}.
    \end{align*}
\end{proof}

\newpage

\section{Gradient descent in a subspace} \label{sec:GD-subspace}

Assume that the feasible set is an affine subspace of $\R^d$, i.e., $\cX = a+\cV$, where $a\in \R^d$ and $\cV\subseteq \R^d$ is a linear subspace. 
We have the following result.

\begin{proposition}[Explicit form of Local \algname{LMO} on an affine subspace]
\label{prop:affine-subspace-explicit}
    Assume that $\cX = a+\cV$,
    where $a\in \R^d$ and $\cV\subseteq \R^d$ is a linear subspace. 
    Let $\Proj_{\cV}:\R^d\to \cV$ denote the orthogonal projector onto $\cV$. Consider one step of \Cref{alg:new}, i.e.,
    \begin{equation}
        \label{eq:affine-subspace-subproblem}
        x_{k+1}\in \argmin_{z\in \cX\cap \cB(x_k,t_k)} \langle \nabla f(x_k),z\rangle,
    \end{equation}
    where $x_k\in \cX$ and $t_k>0$. Then
    \[
        x_{k+1} = \begin{cases}
            x_k - t_k \dfrac{\Proj_{\cV} \nabla f(x_k)}{\|\Proj_{\cV} \nabla f(x_k)\|}, & \text{if } \Proj_{\cV}\nabla f(x_k)\neq 0,\\[3mm]
            x_k, & \text{if } \Proj_{\cV}\nabla f(x_k)=0.
        \end{cases}
    \]
    In particular, if $\Proj_{\cV}\nabla f(x_k)\neq 0$, then the step is unique.
\end{proposition}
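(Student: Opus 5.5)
The plan is to reparametrize the feasible set of \eqref{eq:affine-subspace-subproblem} by displacements from $x_k$, and then reduce the problem to minimizing a linear form over a Euclidean ball inside the subspace $\cV$. Since $x_k\in\cX=a+\cV$, we have $z\in\cX$ if and only if $z-x_k\in\cV$. Writing $z=x_k+v$, the feasible set becomes $\{x_k+v:\ v\in\cV,\ \|v\|\le t_k\}$. The objective becomes $\langle \nabla f(x_k),x_k\rangle+\langle \nabla f(x_k),v\rangle$, and the first term is constant. For $v\in\cV$ we use that $\Proj_{\cV}$ is orthogonal and self-adjoint with $\Proj_{\cV}v=v$. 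This gives $\langle \nabla f(x_k),v\rangle=\langle g_\cV,v\rangle$, where $g_\cV\eqdef\Proj_{\cV}\nabla f(x_k)$. So the subproblem is equivalent to minimizing $\langle g_\cV,v\rangle$ over $v\in\cV\cap\cB(0,t_k)$.

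Next I split into the two cases of the statement.

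\textbf{Case $g_\cV=0$.} The objective is constant on the feasible set, so every feasible point is a minimizer. This includes $v=0$, i.e.\ $x_{k+1}=x_k$, which is a valid output of the oracle. I will note this reading explicitly: in this case the statement should be understood as ``$x_k$ is a valid choice of $x_{k+1}$'', since the minimizer is not unique.

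\textbf{Case $g_\cV\neq 0$.} By Cauchy--Schwarz, $\langle g_\cV,v\rangle\ge-\|g_\cV\|\,\|v\|\ge -t_k\|g_\cV\|$ for every feasible $v$. The candidate $v^\star=-t_k g_\cV/\|g_\cV\|$ attains this bound, and it is feasible because it lies in $\cV$ (as $g_\cV\in\cV$) and has norm $t_k$. For uniqueness, any minimizer must attain equality in both inequalities. Equality in the second forces $\|v\|=t_k$. Equality in Cauchy--Schwarz, together with the sign, forces $v$ to be a nonpositive multiple of $g_\cV$. Together these give $v=v^\star$, and hence $x_{k+1}=x_k-t_k g_\cV/\|g_\cV\|$ is the unique minimizer.

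There is no real obstacle in this argument. The only delicate point is the identity $\langle \nabla f(x_k),v\rangle=\langle \Proj_{\cV}\nabla f(x_k),v\rangle$ for $v\in\cV$, which follows from self-adjointness and idempotence of the orthogonal projector. The other point needing care is the interpretation of the degenerate case $g_\cV=0$ noted above.
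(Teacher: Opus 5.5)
Your proposal is correct and follows essentially the same route as the paper's proof. Both write $z=x_k+s$ with $s\in\cV$ and $\|s\|\le t_k$, drop the constant term, and replace $\nabla f(x_k)$ by $\Proj_{\cV}\nabla f(x_k)$. Both then finish with the same two-case argument: a Cauchy--Schwarz equality analysis gives uniqueness, and the degenerate case is read as ``$x_{k+1}=x_k$ is a valid choice''.
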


\begin{proof}
    Since $x_k \in \cX = a+\cV$, every point $z\in \cX$ can be written uniquely in the form $z=x_k+s$, $s \in \cV$, $\norm{s} \leq t_k$.
    Therefore the subproblem \eqref{eq:affine-subspace-subproblem} can be written as
    \[
        \min_{s\in \cV,\ \norm{s}\le t_k}
        \inner{\nabla f(x_k)}{x_k+s}.
    \]
    Since the term $\inner{\nabla f(x_k)}{x_k}$ does not depend on $s$, this is the same as
    \[
        \min_{s\in \cV,\ \norm{s}\le t_k}
        \inner{\nabla f(x_k)}{s}.
    \]
    Now, for every $s\in \cV$, we have
    \[
        \inner{\nabla f(x_k)}{s} = \inner{\Proj_{\cV}\nabla f(x_k)}{s},
    \]
    because $s\in \cV$ and $\nabla f(x_k)-\Proj_{\cV}\nabla f(x_k)\in \cV^\perp$. 
    Hence the problem further reduces to
    \[
        \min_{s\in \cV,\ \norm{s}\le t_k} \inner{\Proj_{\cV}\nabla f(x_k)}{s}.
    \]
    If $\Proj_{\cV}\nabla f(x_k)=0$, then the objective is identically zero over the feasible set. 
    Hence every $s\in \cV$ satisfying $\norm{s}\le t_k$ is optimal. 
    Equivalently, every point in $ x_k+\{s\in \cV:\norm{s}\le t_k\} $ is a solution of~\eqref{eq:affine-subspace-subproblem}. 
    In particular, $s=0$ yields the valid choice $x_{k+1}=x_k$.
    Assume now that $\Proj_{\cV}\nabla f(x_k)\neq 0$. 
    Then by the Cauchy--Schwarz inequality,
    \[
        \inner{\Proj_{\cV}\nabla f(x_k)}{s} \ge -\norm{\Proj_{\cV}\nabla f(x_k)}\norm{s} \ge -\norm{\Proj_{\cV}\nabla f(x_k)}t_k
    \]
    for every feasible $s$. 
    Equality holds if and only if
    \[
        \norm{s}=t_k \qquad\text{and}\qquad s = - t_k \frac{\Proj_{\cV}\nabla f(x_k)}{\norm{\Proj_{\cV}\nabla f(x_k)}}.
    \]
    Therefore,
    \[
        x_{k+1} = x_k - t_k \frac{\Proj_{\cV}\nabla f(x_k)}{\norm{\Proj_{\cV}\nabla f(x_k)}}. \qedhere
    \]
\end{proof}

The above proposition shows that, when $\cX$ is an affine subspace, the method in \Cref{alg:new} coincides with normalized \algname{GD} on the restriction of $f$ to that affine subspace whenever the projected gradient is nonzero. 
At points where the projected gradient vanishes, this identification holds under the natural rule $x_{k+1}=x_k$.
Indeed, consider the restricted problem
\[
    \min_{x\in a + \cV} f(x).
\]
The tangent space of the feasible set is $\cV$, and the gradient of the restricted objective is precisely the projection of the ambient gradient onto this tangent space:
\[
    \nabla_{\cX} f(x)=\Proj_{\cV}\nabla f(x).
\]
Therefore, whenever $\Proj_{\cV}\nabla f(x_k)\neq 0$, the update from \Cref{prop:affine-subspace-explicit} can be rewritten as
\[
    x_{k+1} = x_k - t_k \frac{\nabla_{\cX} f(x_k)} {\norm{\nabla_{\cX} f(x_k)}}.
\]
Thus the method moves in exactly the same direction as \algname{GD} for minimizing $f$ over the affine subspace $\cX$, namely the negative projected gradient direction, but with step length prescribed directly by the radius $t_k$.
For comparison, ordinary \algname{GD} on the restricted problem would take the form
\[
    x_{k+1}^{\rm GD} = x_k-\eta_k\Proj_{\cV}\nabla f(x_k) = x_k - \eta_k\nabla_{\cX} f(x_k),
\]
for some scalar stepsize $\eta_k > 0$. 
Hence, whenever $\Proj_{\cV}\nabla f(x_k)\neq 0$, the two methods differ only in how the step length is parameterized. 
In fact, the update from \Cref{alg:new} is exactly a \algname{GD} step with effective stepsize
\[
    \eta_k = \frac{t_k}{\norm{\Proj_{\cV}\nabla f(x_k)}} = \frac{t_k}{\norm{\nabla_{\cX} f(x_k)}}.
\]
This discussion can be summarized formally as follows.

\begin{remark}[Connection with \algname{GD} on the affine subspace]
\label{rem:affine-subspace-gd}
    When $\cX=a+\cV$ is an affine subspace, \Cref{alg:new} coincides with normalized {\small \sf \textit{GD}} applied to the restriction of $f$ to $\cX$. 
    More precisely, if $\Proj_{\cV}\nabla f(x_k)\neq 0$, then
    \[
        x_{k+1} = x_k - t_k \frac{\nabla_{\cX} f(x_k)}{\|\nabla_{\cX} f(x_k)\|}, \qquad \nabla_{\cX} f(x_k)=\Proj_{\cV}\nabla f(x_k).
    \]
    Equivalently, the same step can be written as an ordinary \algname{GD} step
    \[
        x_{k+1} = x_k - \eta_k\,\nabla_{\cX} f(x_k), \qquad \eta_k=\frac{t_k}{\|\nabla_{\cX} f(x_k)\|}.
    \]
    Hence, in the affine subspace case, \Cref{alg:new} is exactly \algname{GD} on the restricted problem $f|_{\cX}$, expressed in trust-region form.
\end{remark}

Finally, the unconstrained case appears as the special case $\cV=\R^d$, in which $\Proj_{\cV}= \operatorname{Id}$ is the identity mapping. Then the formula reduces to
\[
    x_{k+1} = x_k - t_k \frac{\nabla f(x_k)}{\|\nabla f(x_k)\|},
\]
which is precisely normalized \algname{GD} in the ambient space.

\newpage

\section{Non-differentiable case}
\label{sec:nondiff}

\Cref{ass:main} requires the function $f$ to be convex and differentiable.
Under this assumption, we then consider three settings: (i) smooth objectives, (ii) strongly convex and smooth objectives, and (iii) objectives with bounded gradients.
The first two explicitly require smoothness, and hence differentiability of $f$.

In this section, we focus on the last setting and remove the differentiability assumption. It is known that, in general, \algname{FW} does {\em not} converge in this setting \citep[Example 1]{Nesterov-2017}. Nesterov's counter-example is simple: $$d=2, \quad f(u,v) = \max\{u,v\}, \quad \cX = \{(u,v) \;:\; u^2 + v^2 \leq 1\}.$$ See also \citep[Appendix 3]{FW-YFLC-2018}.

We make a slight modification to \Cref{alg:new}.
At iteration $k$, if $0 \in \partial f(x_k)$, then $x_k$ is already a minimizer of $f$, and the algorithm terminates. Otherwise, the method selects a subgradient $g_k \in \partial f(x_k)$ and defines the next iterate as
\begin{align}\label{eq:local_lmo_nondiff}
  x_{k+1} \in \argmin_{z \in \cX \cap \cB(x_k,t_k)} \inner{g_k}{z}.
\end{align}

We now show that the bounded gradient analysis extends directly to this non-smooth setting.

\begin{proposition}[Well-posedness in the non-smooth case]
\label{prop:well-posedness-subgradient}
    Let $\cX \subseteq \R^d$ be nonempty, closed, and convex, and let $f : \R^d \to \R$ be convex.
    For every $x_k \in \cX$, every $t_k \ge 0$, and every choice of subgradient $g_k \in \partial f(x_k)$, the set $\cX \cap \cB(x_k,t_k)$ is nonempty, closed, and bounded.
    Consequently, the optimization problem
    \begin{align*}
    \min_{z \in \cX \cap \cB(x_k,t_k)} \inner{g_k}{z}
    \end{align*}
    admits at least one solution.
\end{proposition}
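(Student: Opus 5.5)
The argument mirrors the proof of \Cref{thm:well-posedness}. The only new ingredient is that the linear functional is now defined by an arbitrary subgradient $g_k\in\partial f(x_k)$ rather than by $\nabla f(x_k)$. We first note that, since $f:\R^d\to\R$ is convex and finite everywhere, $\partial f(x_k)$ is nonempty, so a choice of $g_k$ is always possible. Existence of a minimizer, however, does not depend on which subgradient is chosen: any fixed vector $g_k\in\R^d$ defines a continuous linear map $z\mapsto\inner{g_k}{z}$.

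Next we check the three properties of the feasible set $\cX\cap\cB(x_k,t_k)$. It is nonempty because $x_k\in\cX$ and $\|x_k-x_k\|=0\le t_k$, so $x_k$ belongs to the intersection; this covers the degenerate case $t_k=0$, where the set is exactly $\{x_k\}$. It is closed because $\cX$ is closed by assumption and the ball $\cB(x_k,t_k)$ is closed as the preimage of $[0,t_k]$ under the continuous map $z\mapsto\|z-x_k\|$. It is bounded because it is contained in $\cB(x_k,t_k)$.

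Finally, by Heine--Borel the set is compact in $\R^d$. The objective $z\mapsto\inner{g_k}{z}$ is continuous, so Weierstrass' theorem gives at least one minimizer. This shows the update \eqref{eq:local_lmo_nondiff} is well defined.

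There is no genuine obstacle here. The only point needing care is nonemptiness of $\partial f(x_k)$, which follows from the standard fact that a finite convex function on $\R^d$ has a nonempty subdifferential at every point. Convexity of $\cX$ is not needed for existence; it matters only for the later analysis via normal cones.
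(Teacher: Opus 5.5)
Your proof is correct and follows the same route as the paper, which simply defers to the proof of \Cref{thm:well-posedness}. That proof observes that $x_k$ lies in $\cX\cap\cB(x_k,t_k)$, that the set is closed as an intersection of closed sets and bounded because it sits inside the ball, and that the continuous linear objective therefore attains its minimum on this compact set. Your side remarks are also correct: $\partial f(x_k)$ is nonempty for a finite convex $f$, and convexity of $\cX$ is not needed for existence.
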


\begin{proof}
    The proof is identical to that of \Cref{thm:well-posedness}.
\end{proof}

\begin{proposition}[Type-I admissibility with subgradients]
\label{prop:type-I-subgradient}
    Let $x_k \in \cX$ and $x_\star \in \cX_\star$.
    Assume that
    \begin{align*}
    g_k \in \partial f(x_k),
    \qquad
    0 \notin \partial f(x_k),
    \qquad
    0 < t_k \le \frac{\inner{g_k}{x_k-x_\star}}{\norm{g_k}}.
    \end{align*}
    Define
    \begin{align*}
    x_{k+1} \in \argmin_{z \in \cX \cap \cB(x_k,t_k)} \inner{g_k}{z}.
    \end{align*}
    Then the conclusions of the Type-I part of \Cref{thm:descent} remain valid, namely,
    \begin{align*}
    \norm{x_k-x_\star} &\ge t_k, \qquad \norm{x_{k+1}-x_k} = t_k, \qquad \norm{x_{k+1}-x_\star}^2 \le \norm{x_k-x_\star}^2 - t_k^2.
    \end{align*}
\end{proposition}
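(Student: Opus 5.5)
The plan is to check that the Type-I proof in \Cref{sec:proof-I} uses $\nabla f(x_k)$ only as a fixed nonzero vector defining the linear subproblem. That proof never uses differentiability, convexity of $f$, or any property linking $\nabla f(x_k)$ to $f$. If so, replacing $\nabla f(x_k)$ by $g_k$ throughout gives the result verbatim. First note $g_k\neq 0$, since $0\notin\partial f(x_k)$ and $g_k\in\partial f(x_k)$. Hence the ratio defining the admissible radius is well defined.

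Claim (i) then follows from Cauchy--Schwarz: $t_k\le \langle g_k,x_k-x_\star\rangle/\|g_k\|\le \|x_k-x_\star\|$. For the rest I would repeat Steps 1--4 of \Cref{sec:proof-I} with $g_k$ in place of $\nabla f(x_k)$.

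Step 1: from $\|x_{k+1}-x_k\|\le t_k$, Cauchy--Schwarz, and the radius bound, obtain $\langle g_k, x_{k+1}-x_\star\rangle\ge \langle g_k,x_k-x_\star\rangle - \|g_k\|t_k\ge 0$.

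Step 2: write the optimality condition $0\in g_k+N_{\cX}(x_{k+1})+N_{\cB_k}(x_{k+1})$. This gives $n_k\in N_{\cX}(x_{k+1})$ and $\lambda_k\ge0$ with $g_k+n_k+\lambda_k(x_{k+1}-x_k)=0$.

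Step 3: take the inner product with $x_{k+1}-x_\star$. Use $\langle n_k,x_{k+1}-x_\star\rangle\ge0$, which holds because $x_\star\in\cX$, to get $\lambda_k\langle x_k-x_{k+1},x_{k+1}-x_\star\rangle\ge0$.

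Step 4: split into cases. If $\lambda_k>0$, the ball constraint is active, so $\|x_{k+1}-x_k\|=t_k$, and \Cref{lem:generic-decomposition} yields the decrease. If $\lambda_k=0$, then $x_{k+1}$ minimizes $\langle g_k,\cdot\rangle$ over $\cX$. Since $x_\star\in\cX$, this gives $\langle g_k,x_{k+1}-x_\star\rangle\le 0$, hence $=0$ by Step 1. The equality case in Step 1's chain then forces $\|x_{k+1}-x_k\|=t_k$, using $g_k\neq0$, and forces $x_{k+1}-x_k=-t_k g_k/\|g_k\|$. Consequently $\langle x_k-x_{k+1},x_{k+1}-x_\star\rangle=0$, and \Cref{lem:generic-decomposition} finishes.

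The only point needing care is the sum rule for normal cones in Step 2, i.e.\ the KKT condition for a linear objective over $\cX\cap\cB_k$. This is the same regularity issue already handled in the differentiable proof: it is a property of the constraint set and the linear objective, not of $f$. I would simply invoke it as in \Cref{sec:proof-I}. If an extra justification is wanted, I would note it needs some qualification, e.g.\ a point of $\cX$ in the interior of $\cB_k$ ($x_k$ itself works since $t_k>0$). With that, nothing else depends on $f$, so I expect no real obstacle.
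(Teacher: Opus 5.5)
Your proposal is correct and is exactly the paper's argument: the paper proves this proposition by noting that the Type-I proof of \Cref{thm:descent} uses $\nabla f(x_k)$ only as the nonzero linear objective of the local subproblem, so replacing it by $g_k$ (nonzero because $0\notin\partial f(x_k)$) goes through verbatim. Your remark that the normal-cone sum rule in Step~2 holds because $x_k\in\cX\cap\operatorname{int}\cB(x_k,t_k)$ when $t_k>0$ supplies a constraint-qualification detail that the paper leaves implicit.
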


\begin{proof}
    The proof is the same as that of the Type-I admissibility part in \Cref{thm:descent}, after replacing~$\nabla f(x_k)
    $ by~$g_k$.
    Indeed, the argument uses only the admissibility condition, the optimality conditions of the local linear minimization subproblem, and the geometry of the ball constraint.
    No differentiability of~$f$ is needed.
\end{proof}

\begin{theorem}[Rate under bounded subgradients]
\label{thm:bounded_subgradients}
    Let $\cX \subseteq \R^d$ be nonempty, closed, and convex, let $f : \R^d \to \R$ be convex, and let $\{x_k\}_{k \ge 0}$ be generated by the modified \algname{Local LMO} method in \eqref{eq:local_lmo_nondiff}, with $x_0 \in \cX$.
    Assume that the subgradients of $f$ are bounded on $\cX$, i.e.,
    \begin{align*}
        \norm{g} \le G \qquad \forall x \in \cX,\ \forall g \in \partial f(x), 
    \end{align*}
    for some constant $G > 0$.
    Let $x_\star \in \cX_\star$, and assume that whenever $0 \notin \partial f(x_k)$, the radius is chosen as
    \begin{align*}
        t_k \eqdef \frac{f(x_k)-f(x_\star)}{\norm{g_k}}, \qquad g_k \in \partial f(x_k).
    \end{align*}
    If $0 \in \partial f(x_k)$, then $t_k = 0$ and the method terminates.
    Then, for every $K \ge 1$,
    \begin{align}
        \frac{1}{K}\sum_{k=0}^{K-1} \rbrac{f(x_k)-f(x_\star)}^2 \le \frac{G^2 \norm{x_0-x_\star}^2}{K}. 
        \label{eq:square-sum-bound-subgradient}
    \end{align}
    Moreover, the average iterate $ \hat x_K \eqdef \frac{1}{K}\sum_{k=0}^{K-1} x_k$ satisfies
    \begin{align}
        f(\hat x_K)-f(x_\star) \le \frac{G \norm{x_0-x_\star}}{\sqrt{K}}. 
        \label{eq:avg_iter_G_subgradient}
    \end{align}
\end{theorem}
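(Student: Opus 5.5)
The plan is to mirror the proof of \Cref{thm:bounded_gradients}, with the Type-I one-step result for subgradients, \Cref{prop:type-I-subgradient}, replacing \Cref{thm:descent}. Fix $k\ge 0$. If $0\in\partial f(x_k)$, then $x_k$ minimizes $f$ over $\R^d$, hence over $\cX$. So $f(x_k)=f(x_\star)$, and the method stays at $x_k$ (interpreting termination as $x_{k+1}=x_k$ for the purposes of the sums). Every later term $(f(x_j)-f(x_\star))^2$ then vanishes, and the inequality $\|x_{k+1}-x_\star\|^2\le\|x_k-x_\star\|^2-t_k^2$ holds trivially with $t_k=0$.

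If $0\notin\partial f(x_k)$, then $g_k\neq 0$. The subgradient inequality $f(x_\star)\ge f(x_k)+\langle g_k,x_\star-x_k\rangle$ gives $f(x_k)-f(x_\star)\le\langle g_k,x_k-x_\star\rangle$, and dividing by $\|g_k\|$ shows the Polyak radius satisfies $t_k\le \langle g_k,x_k-x_\star\rangle/\|g_k\|$.

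One subtlety is that \Cref{prop:type-I-subgradient} requires $t_k>0$. If $f(x_k)=f(x_\star)$, then $x_k$ is already optimal over $\cX$ (though possibly $0\notin\partial f(x_k)$). In that case $t_k=0$, the feasible set $\cX\cap\cB(x_k,0)$ is $\{x_k\}$, so $x_{k+1}=x_k$ and the decrease inequality holds with equality. Otherwise $t_k>0$ and the proposition applies, giving
\[
\|x_{k+1}-x_\star\|^2\le\|x_k-x_\star\|^2-\frac{(f(x_k)-f(x_\star))^2}{\|g_k\|^2}\le\|x_k-x_\star\|^2-\frac{(f(x_k)-f(x_\star))^2}{G^2},
\]
where the last step uses the bounded subgradient assumption.

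Next, telescope over $k=0,\dots,K-1$ and drop $\|x_K-x_\star\|^2\ge 0$ to obtain \eqref{eq:square-sum-bound-subgradient}. For \eqref{eq:avg_iter_G_subgradient}, use convexity (Jensen) to get $f(\hat x_K)-f(x_\star)\le \frac1K\sum_k (f(x_k)-f(x_\star))$. Since each term is nonnegative (all $x_k\in\cX$), apply Cauchy--Schwarz, or Jensen for the square, to bound this by $\big(\frac1K\sum_k(f(x_k)-f(x_\star))^2\big)^{1/2}\le G\|x_0-x_\star\|/\sqrt K$.

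The only delicate point is the degenerate case handling: the zero radius and termination when $0\in\partial f(x_k)$ or $f(x_k)=f(x_\star)$. One must confirm that all iterates stay in $\cX$ and that the decrease inequality persists in these cases, so that the telescoping remains valid. Everything else is a direct transcription of the differentiable argument.
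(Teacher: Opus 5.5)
Your proposal is correct and follows essentially the same route as the paper: you verify Type-I admissibility of the Polyak radius via the subgradient inequality, invoke the Type-I one-step result for subgradients, use $\|g_k\|\le G$, telescope, and finish with Jensen for the average iterate. Your explicit treatment of the case $0\notin\partial f(x_k)$ with $f(x_k)=f(x_\star)$ (so $t_k=0$ and $x_{k+1}=x_k$) is welcome extra care, since the paper applies the one-step proposition, which requires $t_k>0$, without addressing this degenerate case.
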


\begin{proof}
    Fix $k \ge 0$. If $0 \in \partial f(x_k)$, then $x_k$ is a minimizer of $f$, and there is nothing to prove.
    So assume that $0 \notin \partial f(x_k)$.
    Since $g_k \in \partial f(x_k)$, the subgradient inequality gives $f(x_k)-f(x_\star) \le \inner{g_k}{x_k-x_\star}$.
    Dividing by $\norm{g_k}$, we obtain
    \begin{align*}
        t_k = \frac{f(x_k)-f(x_\star)}{\norm{g_k}} \le \frac{\inner{g_k}{x_k-x_\star}}{\norm{g_k}}.
    \end{align*}
    Hence the Type-I admissibility condition in \Cref{prop:type-I-subgradient} is satisfied.
    Therefore,
    \begin{align*}
        \norm{x_{k+1}-x_\star}^2 \le \norm{x_k-x_\star}^2 - t_k^2.
    \end{align*}
    Substituting the definition of $t_k$ yields
    \begin{align*}
        \norm{x_{k+1}-x_\star}^2 \le \norm{x_k-x_\star}^2 - \frac{\rbrac{f(x_k)-f(x_\star)}^2}{\norm{g_k}^2}.
    \end{align*}
    Using the bounded subgradients assumption, we get
    \begin{align*}
        \norm{x_{k+1}-x_\star}^2 \le \norm{x_k-x_\star}^2 - \frac{\rbrac{f(x_k)-f(x_\star)}^2}{G^2}.
    \end{align*}
    Summing this inequality for $k=0,1,\dots,K-1$, we obtain after rearranging,
    \begin{align*}
        \frac{1}{K}\sum_{k=0}^{K-1} \rbrac{f(x_k)-f(x_\star)}^2 \le \frac{G^2 \norm{x_0-x_\star}^2}{K}.
    \end{align*}
    This proves \eqref{eq:square-sum-bound-subgradient}.
    The bound \eqref{eq:avg_iter_G_subgradient} for the average iterate follows exactly as in the proof of \Cref{thm:bounded_gradients}, by Jensen's inequality.
\end{proof}

\newpage

\section{$(L_0, L_1)$--smooth case}
\label{sec:L0L1case}

We now consider the generalized smoothness regime in place of the standard $L$--smoothness assumption. 
Throughout this section, we assume that $f$ is defined on an open convex set containing $\cX$ and satisfies the following assumption:
\begin{assumption}
    \label{asp:l0l1}
    The function $f$ is asymmetrically $(L_0,L_1)$--smooth, meaning that there exist constants $L_0 > 0, L_1\ge 0$ such that for all $x,y \in \cX$,
    \begin{align*}
        &\norm{\nabla f(x) - \nabla f(y)}
        \le \rbr{L_0 + L_1 \norm{\nabla f(y)}} \norm{x-y}. 
    \end{align*}
\end{assumption}
We refer the readers to \cite{gorbunov2025methods} for further details on this assumption.

It is known that for a convex function $f$ satisfying \Cref{asp:l0l1}, the following lemma is true.
\begin{lemma}[Lemma 2.2 of \cite{gorbunov2025methods}]
    \label{lemma:l0l1} Let $f$ be convex, satisfying \Cref{asp:l0l1}. 
    For any $x, y \in \R^d$, we have 
    \begin{align*}
        \frac{1}{2}\rbrac{\frac{\norm{\nabla f(x) - \nabla f(y)}^2}{L_0 + L_1 \norm{\nabla f(y)}} + \frac{\norm{\nabla f(x) - \nabla f(y)}^2}{L_0 + L_1 \norm{\nabla f(x)}}}  \leq \inner{\nabla f(x) - \nabla f(y)}{x-y}.
    \end{align*}
\end{lemma}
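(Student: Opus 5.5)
The plan is the standard route to cocoercivity: first prove a one-sided lower bound on the Bregman divergence of $f$, then symmetrize. Write $c_x \eqdef L_0+L_1\norm{\nabla f(x)}$ and $c_y \eqdef L_0+L_1\norm{\nabla f(y)}$. The target intermediate inequality is
\begin{equation*}
f(y)-f(x)-\inner{\nabla f(x)}{y-x}\;\ge\;\frac{\norm{\nabla f(x)-\nabla f(y)}^2}{2c_y}.
\end{equation*}
Once this holds for every ordered pair, I apply it with the roles of $x$ and $y$ swapped. That gives $f(x)-f(y)-\inner{\nabla f(y)}{x-y}\ge \norm{\nabla f(x)-\nabla f(y)}^2/(2c_x)$. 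Adding the two inequalities cancels the function values and leaves exactly $\inner{\nabla f(x)-\nabla f(y)}{x-y}$ on the left and the claimed symmetric sum on the right.

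To prove the one-sided bound I tilt the function. Define $\phi(z)\eqdef f(z)-\inner{\nabla f(x)}{z}$. This $\phi$ is convex, satisfies $\nabla\phi(x)=0$, and is therefore globally minimized at $x$. Its gradient differences coincide with those of $f$, so \Cref{asp:l0l1} with reference point $y$ gives
\begin{equation*}
\norm{\nabla\phi(z)-\nabla\phi(y)}\le c_y\norm{z-y}\quad\text{for all } z.
\end{equation*}
The constant here is frozen at $y$, which is exactly why the asymmetric form of the assumption is convenient.

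Integrating $\nabla\phi$ along the segment from $y$ to $z$ and using this bound gives the descent-lemma upper estimate
\begin{equation*}
\phi(z)\le \phi(y)+\inner{\nabla\phi(y)}{z-y}+\tfrac{c_y}{2}\norm{z-y}^2 .
\end{equation*}
I evaluate it at the gradient step $z=y-\nabla\phi(y)/c_y$. Combined with minimality of $x$, this yields
\begin{equation*}
\phi(x)\le\phi(z)\le \phi(y)-\frac{\norm{\nabla\phi(y)}^2}{2c_y}.
\end{equation*}
Since $\nabla\phi(y)=\nabla f(y)-\nabla f(x)$, unfolding $\phi$ gives precisely the intermediate inequality.

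The main obstacle is a domain issue rather than an algebraic one. \Cref{asp:l0l1} is stated for points of $\cX$, but the auxiliary point $z=y-\nabla\phi(y)/c_y$ and the segment $[y,z]$ generally leave $\cX$, and the lemma itself is claimed for all $x,y\in\R^d$. I would handle this by reading the assumption as holding on the whole (open, convex) domain of $f$, which is the setting of \citet{gorbunov2025methods} from which the lemma is quoted. If this is not acceptable, I would replace the single long step by a short step that stays inside the domain. That changes only constants and would need separate care.

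The remaining points are routine. The segment integration is valid because the bound only ever compares $\nabla\phi$ at a point $z$ with $\nabla\phi$ at the fixed point $y$. Convexity of $\phi$ is used solely to certify that $x$ is its global minimizer.
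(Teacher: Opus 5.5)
Your proof is correct. The paper gives no proof of its own, only the citation to Lemma~2.2 of Gorbunov et al., and your route is the standard proof of that lemma: tilt $f$ by $\inner{\nabla f(x)}{\cdot}$, apply the descent lemma centered at $y$ with the frozen constant $L_0+L_1\norm{\nabla f(y)}$, evaluate at the gradient step, then symmetrize. Your global reading of \Cref{asp:l0l1} is the right one, and it is forced by the claim for all $x,y\in\R^d$; but drop the ``short-step'' fallback, because at a boundary point $y$ of $\cX$ such as $x_\star$ (where the paper applies the lemma) no positive step along $-\nabla\phi(y)$ need stay in $\cX$, so it would not merely change constants.
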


\begin{theorem}[Rate under asymmetric $(L_0,L_1)$--smoothness]
    \label{thm:asym_L0L1_rate}
    Let Assumptions \ref{ass:main} and \ref{asp:l0l1} hold, and $f$ be convex. Let $\{x_k\}_{k \ge 0}$ be the iterates generated by \Cref{alg:new}. Assume that the radii are chosen as
    {
        \begin{align*}
            &t_k :=
            \begin{cases}
                \dfrac{1}{2}\rbr{ \dfrac{\norm{\nabla f(x_k) - \nabla f(x_\star)}}{L_0 + L_1 \norm{\nabla f(x_k)}} + \dfrac{\norm{\nabla f(x_k) - \nabla f(x_\star)}}{L_0 + L_1 \norm{\nabla f(x_\star)}} } , & \nabla f(x_k) \neq \nabla f(x_\star), \\[1.2ex]
                0, & \nabla f(x_k) = \nabla f(x_\star).
            \end{cases}
        \end{align*}
    }
    Then, for every $K \ge 1$,
    \begin{align*}
        &\frac{1}{K}\sum_{k=0}^{K-1} \rbr{ \dfrac{1}{2}\rbr{ \dfrac{\norm{\nabla f(x_k) - \nabla f(x_\star)}}{L_0 + L_1 \norm{\nabla f(x_k)}} + \dfrac{\norm{\nabla f(x_k) - \nabla f(x_\star)}}{L_0 + L_1 \norm{\nabla f(x_\star)}} } }^2 \le \frac{\norm{x_0 - x_\star}^2}{K},
    \end{align*}
    and hence
    \begin{align*}
        &\min_{0 \le k \le K-1} \rbr{ \dfrac{1}{2}\rbr{ \dfrac{\norm{\nabla f(x_k) - \nabla f(x_\star)}}{L_0 + L_1 \norm{\nabla f(x_k)}} + \dfrac{\norm{\nabla f(x_k) - \nabla f(x_\star)}}{L_0 + L_1 \norm{\nabla f(x_\star)}} } }^2 \le \frac{\norm{x_0 - x_\star}^2}{K}.
    \end{align*}
    Moreover, if $\sqrt{K} > L_1 \norm{x_0 - x_\star}$, then
    \begin{align*}
        &\min_{0 \le k \le K-1} \norm{\nabla f(x_k) - \nabla f(x_\star)}
        \le
        \frac{\rbr{L_0 + L_1 \norm{\nabla f(x_\star)}}\norm{x_0 - x_\star}}{\sqrt{K} - L_1 \norm{x_0 - x_\star}}.
    \end{align*}
    Thus, if $K \ge 4L_1^2 \|x_0 - x_\star\|^2$, then
    \begin{align*}
        &\min_{0 \le k \le K-1} \norm{\nabla f(x_k) - \nabla f(x_\star)}^2 \le \frac{ 4(L_0 + L_1 \norm{\nabla f(x_\star)})^2\norm{x_0 - x_\star}^2}{K}.
    \end{align*}
\end{theorem}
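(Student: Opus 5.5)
The plan is to follow the same template as the proof of \Cref{thm:L-smooth}, replacing cocoercivity by \Cref{lemma:l0l1}. Write $\Delta_k \eqdef \norm{\nabla f(x_k)-\nabla f(x_\star)}$, $c_k \eqdef L_0+L_1\norm{\nabla f(x_k)}$ and $c_\star \eqdef L_0+L_1\norm{\nabla f(x_\star)}$, so that $t_k = \frac{1}{2}\bigl(\frac{\Delta_k}{c_k}+\frac{\Delta_k}{c_\star}\bigr)$ whenever $\Delta_k\neq 0$.

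\textbf{Step 1: admissibility.} Suppose $\Delta_k\neq 0$. Apply \Cref{lemma:l0l1} with $x=x_k$ and $y=x_\star$ to get
\[
\tfrac12\bigl(\Delta_k^2/c_\star+\Delta_k^2/c_k\bigr)\le\inner{\nabla f(x_k)-\nabla f(x_\star)}{x_k-x_\star}.
\]
Dividing by $\Delta_k$ gives exactly $t_k\le \inner{\nabla f(x_k)-\nabla f(x_\star)}{x_k-x_\star}/\Delta_k$. Since also $t_k>0$, the Type-II condition \eqref{eq:radius-condition-2} holds, and \Cref{thm:descent}(iii) yields $\norm{x_{k+1}-x_\star}^2\le\norm{x_k-x_\star}^2-t_k^2$.

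If instead $\Delta_k=0$, then $t_k=0$. The feasible set is $\{x_k\}$, so $x_{k+1}=x_k$ and the same inequality holds trivially with $t_k=0$. This case needs care because \Cref{alg:new} formally asks for $t_k>0$, but the theorem explicitly prescribes $t_k=0$. \Cref{lemma:l0l1} is stated for $x,y\in\R^d$, whereas \Cref{asp:l0l1} is only on $\cX$; this causes no issue because $x_k,x_\star\in\cX$ and I will invoke it as given.

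\textbf{Step 2: telescoping.} Summing over $k=0,\dots,K-1$ and dropping $\norm{x_K-x_\star}^2\ge0$ gives $\sum_k t_k^2\le\norm{x_0-x_\star}^2$. This is the first claimed bound after dividing by $K$, and the min bound follows since the minimum is at most the average.

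\textbf{Step 3: converting to a bound on $\Delta_k$.} This is the main obstacle. Pick $k$ attaining the minimum, so that $t_k\le \norm{x_0-x_\star}/\sqrt K=:r$. First drop the nonnegative $\Delta_k/c_k$ term? No: that would only give $\Delta_k\le 2c_\star r$, which yields the final bound directly but not the intermediate one. Instead, use the asymmetric smoothness on $\norm{\nabla f(x_k)}\le\norm{\nabla f(x_\star)}+\Delta_k$, which gives $c_k\le c_\star+L_1\Delta_k$. Hence
\[
\frac{\Delta_k}{c_k}\ge\frac{\Delta_k}{c_\star+L_1\Delta_k},
\]
and since $t_k\ge \frac12\cdot\frac{\Delta_k}{c_k}+\frac12\cdot\frac{\Delta_k}{c_\star}$, it suffices to treat each half. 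The cleanest route is to note that $t_k\ge \frac{\Delta_k}{c_\star+L_1\Delta_k}$, because both $\frac{\Delta_k}{c_k}$ and $\frac{\Delta_k}{c_\star}$ are at least this quantity. Then $\frac{\Delta_k}{c_\star+L_1\Delta_k}\le r$ rearranges to $\Delta_k(1-L_1 r)\le c_\star r$. When $\sqrt K>L_1\norm{x_0-x_\star}$ we have $1-L_1r>0$, and this gives $\Delta_k\le c_\star\norm{x_0-x_\star}/(\sqrt K-L_1\norm{x_0-x_\star})$. Finally, $K\ge4L_1^2\norm{x_0-x_\star}^2$ gives $\sqrt K-L_1\norm{x_0-x_\star}\ge\sqrt K/2$; squaring yields the last bound. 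The minimum over $k$ of $\Delta_k$ is bounded by this particular $\Delta_k$.
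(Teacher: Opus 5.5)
Your proposal is correct and follows the paper's proof essentially step for step: Type-II admissibility from \Cref{lemma:l0l1} with $x=x_k$, $y=x_\star$, telescoping the decrease from \Cref{thm:descent}(iii), and then the bound $c_k\le c_\star+L_1\Delta_k$ to get $t_k\ge \Delta_k/(c_\star+L_1\Delta_k)$ and rearrange (that bound is just the triangle inequality $\norm{\nabla f(x_k)}\le\norm{\nabla f(x_\star)}+\Delta_k$, not asymmetric smoothness). Your passing remark is also right and worth keeping: since $t_k\ge \Delta_k/(2c_\star)$, the final bound $\min_k\Delta_k^2\le 4c_\star^2\norm{x_0-x_\star}^2/K$ in fact holds for every $K\ge 1$, so the condition $K\ge 4L_1^2\norm{x_0-x_\star}^2$ is not needed for it.
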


\begin{proof}
    Fix $k \ge 0$. If $\nabla f(x_k) = \nabla f(x_\star)$, then $t_k = 0$, and the proof is trivial. 
    Assume now that $\nabla f(x_k) \neq \nabla f(x_\star)$. By \Cref{lemma:l0l1} with $x = x_k$ and $y = x_\star$, we have
    \begin{align*}
        \frac{1}{2}\rbr{\frac{\norm{\nabla f(x_k) - \nabla f(x_\star)}^2}{L_0 + L_1 \norm{\nabla f(x_\star)}} + \frac{\norm{\nabla f(x_k) - \nabla f(x_\star)}^2}{L_0 + L_1 \norm{\nabla f(x_k)}}} \le \inner{\nabla f(x_k) - \nabla f(x_\star)}{x_k - x_\star}.
    \end{align*}
    Dividing both sides by $\norm{\nabla f(x_k) - \nabla f(x_\star)}$, we obtain
    \begin{align*}
        t_k \le \frac{\inner{\nabla f(x_k) - \nabla f(x_\star)}{x_k - x_\star}}{\norm{\nabla f(x_k) - \nabla f(x_\star)}}.
    \end{align*}
    Hence the Type-II admissibility condition of \Cref{thm:descent} is satisfied. Therefore,
    \begin{align*}
        \norm{x_{k+1} - x_\star}^2 \le \norm{x_k - x_\star}^2 - t_k^2.
    \end{align*}
    Summing this inequality for $k = 0,1,\dots,K-1$,
    rearranging and substituting the definition of $t_k$ yield
    \begin{align*}
        \norm{x_K - x_\star}^2 + \sum_{k=0}^{K-1} \rbr{ \dfrac{1}{2}\rbr{ \dfrac{\norm{\nabla f(x_k) - \nabla f(x_\star)}}{L_0 + L_1 \norm{\nabla f(x_k)}} + \dfrac{\norm{\nabla f(x_k) - \nabla f(x_\star)}}{L_0 + L_1 \norm{\nabla f(x_\star)}} }  }^2 \le \norm{x_0 - x_\star}^2.
    \end{align*}
    Dropping the nonnegative term $\norm{x_K - x_\star}^2$ and dividing by $K$, we obtain
    \begin{align*}
        \frac{1}{K}\sum_{k=0}^{K-1} \rbr{ \dfrac{1}{2}\rbr{ \dfrac{\norm{\nabla f(x_k) - \nabla f(x_\star)}}{L_0 + L_1 \norm{\nabla f(x_k)}} + \dfrac{\norm{\nabla f(x_k) - \nabla f(x_\star)}}{L_0 + L_1 \norm{\nabla f(x_\star)}} } }^2 \le \frac{\norm{x_0 - x_\star}^2}{K}.
    \end{align*}
    Now we apply 
    $
        \min \limits_{0 \le k \le K-1} a_k \le \frac{1}{K}\sum_{k=0}^{K-1} a_k,
    $
    with
    \begin{align*}
        a_k := \rbr{ \dfrac{1}{2}\rbr{ \dfrac{1}{L_0 + L_1 \norm{\nabla f(x_k)}} + \dfrac{1}{L_0 + L_1 \norm{\nabla f(x_\star)}} } \cdot \norm{\nabla f(x_k) - \nabla f(x_\star)}}^2,
    \end{align*}
    we have 
    \begin{align*}
        \min_{0 \le k \le K-1} \rbr{ \dfrac{1}{2}\rbr{ \dfrac{\norm{\nabla f(x_k) - \nabla f(x_\star)}}{L_0 + L_1 \norm{\nabla f(x_k)}} + \dfrac{\norm{\nabla f(x_k) - \nabla f(x_\star)}}{L_0 + L_1 \norm{\nabla f(x_\star)}} } }^2 \le \frac{\norm{x_0 - x_\star}^2}{K}.
    \end{align*}
    To interpret the result, notice that by the triangle inequality, we have 
    \begin{align*}
        L_0 + L_1 \norm{\nabla f(x_k)} &\leq L_0 + L_1 \norm{\nabla f(x_\star)} + L_1 \norm{\nabla f(x_k) - \nabla f(x_\star)}, \\
        L_0 + L_1 \norm{\nabla f(x_\star)} &\leq L_0 + L_1 \norm{\nabla f(x_k)} + L_1 \norm{\nabla f(x_k) - \nabla f(x_\star)}.
    \end{align*}
    We can thus lower bound 
    \begin{align*}
        &\frac{1}{2}\rbr{\frac{1}{L_0 + L_1 \norm{\nabla f(x_k)}} + \frac{1}{L_0 + L_1 \norm{\nabla f(x_\star)}}} \\
        &\quad \geq \frac{1}{L_0 + L_1 \norm{\nabla f(x_\star)} + L_1 \norm{\nabla f(x_k) - \nabla f(x_\star)}}.
    \end{align*}
    Therefore,
    \begin{align*}
        &\frac{\norm{\nabla f(x_k) - \nabla f(x_\star)}}{L_0 + L_1 \norm{\nabla f(x_\star)} + L_1 \norm{\nabla f(x_k) - \nabla f(x_\star)}} \\
        &\quad \le
        \frac{1}{2}\rbr{\frac{\norm{\nabla f(x_k) - \nabla f(x_\star)}}{L_0 + L_1 \norm{\nabla f(x_k)}} + \frac{\norm{\nabla f(x_k) - \nabla f(x_\star)}}{L_0 + L_1 \norm{\nabla f(x_\star)}}}.
    \end{align*}
    Hence
    \begin{align*}
        &\min_{0 \le k \le K-1} \frac{\norm{\nabla f(x_k) - \nabla f(x_\star)}}{L_0 + L_1 \norm{\nabla f(x_\star)} + L_1 \norm{\nabla f(x_k) - \nabla f(x_\star)}}
        \le \frac{\norm{x_0 - x_\star}}{\sqrt{K}}.
    \end{align*}
    Now fix an index $k \in \{0,\dots,K-1\}$ attaining the minimum above, and suppose that $\sqrt{K} > L_1 \norm{x_0 - x_\star}$. Then
    \begin{align*}
        &\frac{\norm{\nabla f(x_k) - \nabla f(x_\star)}}{L_0 + L_1 \norm{\nabla f(x_\star)} + L_1 \norm{\nabla f(x_k) - \nabla f(x_\star)}}
        \le \frac{\norm{x_0 - x_\star}}{\sqrt{K}}.
    \end{align*}
    Rearranging gives
    \begin{align*}
        &\rbr{1 - \frac{L_1 \norm{x_0 - x_\star}}{\sqrt{K}}}\norm{\nabla f(x_k) - \nabla f(x_\star)}
        \le
        \frac{\rbr{L_0 + L_1 \norm{\nabla f(x_\star)}}\norm{x_0 - x_\star}}{\sqrt{K}}.
    \end{align*}
    Since $1 - \frac{L_1 \norm{x_0 - x_\star}}{\sqrt{K}} > 0$, we conclude that
    \begin{align*}
        &\norm{\nabla f(x_k) - \nabla f(x_\star)}
        \le
        \frac{\rbr{L_0 + L_1 \norm{\nabla f(x_\star)}}\norm{x_0 - x_\star}}{\sqrt{K} - L_1 \norm{x_0 - x_\star}}.
    \end{align*}
    Therefore,
    \begin{align*}
        &\min_{0 \le k \le K-1} \norm{\nabla f(x_k) - \nabla f(x_\star)}
        \le
        \frac{\rbr{L_0 + L_1 \norm{\nabla f(x_\star)}}\norm{x_0 - x_\star}}{\sqrt{K} - L_1 \norm{x_0 - x_\star}}.
    \end{align*}

    Finally, if $K \ge 4L_1^2 \|x_0 - x_\star\|^2$, then
    \begin{align*}
        \sqrt{K} - L_1 \|x_0 - x_\star\|
        \ge
        \frac{\sqrt{K}}{2},
    \end{align*}
    and hence
    \begin{align*}
        &\min_{0 \le k \le K-1} \|\nabla f(x_k) - \nabla f(x_\star)\|
        \le
        \frac{2\left(L_0 + L_1 \|\nabla f(x_\star)\|\right)\|x_0 - x_\star\|}{\sqrt{K}}.
    \end{align*}
    Squaring both sides gives
    \begin{align*}
        &\min_{0 \le k \le K-1} \|\nabla f(x_k) - \nabla f(x_\star)\|^2
        \le
        \frac{4\left(L_0 + L_1 \|\nabla f(x_\star)\|\right)^2 \|x_0 - x_\star\|^2}{K}.
    \end{align*}
\end{proof}

\begin{remark}
    \label{rem:asym_L0L1_rate}
    Theorem~\ref{thm:asym_L0L1_rate} is the natural analogue of \Cref{thm:L-smooth} under asymmetric $(L_0,L_1)$--smoothness. 
    In the standard $L$--smooth case, the coefficient in front of $\norm{\nabla f(x_k) - \nabla f(x_\star)}$ is the constant $\nicefrac{1}{L}$, and one recovers the bound
    \begin{align*}
        \min_{0 \le k \le K-1} \norm{\nabla f(x_k) - \nabla f(x_\star)} \le \frac{L_0\norm{x_0 - x_\star}}{\sqrt{K}}.
    \end{align*}
\end{remark}

\newpage

\section{Non-convex case}
\label{sec:non-convex}

We next analyze the non-convex case under the Polyak--{\L}ojasiewicz assumption, which is standard in nonconvex optimization.
\paragraph{The problem with standard P\L.}
The usual Polyak--\L ojasiewicz (P\L) inequality is
\begin{align*}
    \frac{1}{2}\norm{\nabla f(x)}^2 \ge \mu \rbrac{f(x) - f(x_\star)}, \quad \forall x \in \cX.
\end{align*}
For unconstrained problems, this is natural, because if $x_\star$ is a minimizer, then necessarily $\nabla f(x_\star) = 0$.
Therefore, the norm of the full gradient is a valid measure of stationarity.
However, for constrained problems, a constrained minimizer $x_\star \in \cX_\star$ does not in general satisfy $\nabla f(x_\star) = 0$.
Instead, what holds is the first-order optimality condition
\begin{align*}
    \inner{\nabla f(x_\star)}{z - x_\star} \ge 0, \qquad \forall z \in \cX.
\end{align*}

A natural first attempt to extend the Polyak--\L ojasiewicz inequality to the constrained setting is to impose
\begin{align*}
    \frac{1}{2}\norm{\nabla f(x)-\nabla f(x_\star)}^2 \ge \mu \rbrac{f(x)-f(x_\star)}, \qquad \forall x \in \cX.
\end{align*}
However, this condition is not the most intrinsic constrained analogue of the P\L\  inequality. 
The quantity $\norm{\nabla f(x)-\nabla f(x_\star)}$ measures whether the ambient gradient at $x$ matches the ambient gradient at one fixed minimizer $x_\star$. 
But constrained stationarity is not characterized by matching $\nabla f(x_\star)$. 
Rather, it is characterized by the condition $0 \in \nabla f(x)+N_{\cX}(x)$, where the normal cone depends on the current point $x$. 
This suggests that a constrained P\L\ condition should be formulated in terms of a residual whose zero set coincides with the constrained first-order condition, rather than in terms of agreement with the ambient gradient at a fixed minimizer.

\paragraph{Projected P\L.}
A more principled way to extend the Polyak--\L ojasiewicz inequality to the constrained setting is to view the problem through the Projected Gradient Descent framework.
For a stepsize $\gamma > 0$, the \algname{PGD} update is $x^+ = \Proj_{\cX}\bigl(x - \gamma \nabla f(x)\bigr)$.
This naturally leads to the projected gradient mapping
\begin{align*}
    G_\gamma(x) \eqdef \frac1\gamma\parens{x-\Proj_{\cX}(x-\gamma \nabla f(x))}.
\end{align*}
This object is the constrained analogue of the gradient.
When $\cX = \R^d$, we have
\begin{align*}
    G_\gamma(x) = \frac{1}{\gamma}\rbrac{x - \Proj_{\R^d}(x - \gamma \nabla f(x))} = \nabla f(x).
\end{align*}
Moreover, for a closed convex set $\cX$, the condition $G_\gamma(x) = 0$ is equivalent to
\begin{align*}
    x = \Proj_{\cX}\rbrac{x - \gamma \nabla f(x)}.
\end{align*}
The above identity, from the perspective of projection onto a closed convex set, is equivalent to $\inner{-\gamma \nabla f(x)}{z - x} \le 0$ for all $z \in \cX$, i.e.,
\begin{align*}
    \inner{\nabla f(x)}{z - x} \ge 0, \quad \forall z \in \cX,
\end{align*}
which is precisely the variational inequality form of first-order stationarity for minimizing $f$ over $\cX$.
More explicitly, the above condition is equivalent to $-\nabla f(x) \in N_{\cX}(x)$, or $0 \in \nabla f(x) + N_{\cX}(x)$.

\begin{assumption}[Projected P\L]
    \label{ass:projected_pl}
    Let \Cref{ass:main} hold without $f$ being convex.
    Furthermore, assume that there exist constants $\mu > 0$ and $\gamma > 0$ such that
    \begin{align}
        \label{eq:projected_pl}
        \frac{1}{2}\norm{G_{\gamma}(x)}^2 \ge \mu \rbrac{f(x)-f(x_\star)}, \qquad \forall x \in \cX,
    \end{align}
    where
    \begin{align*}
        G_{\gamma}(x) = \frac{1}{\gamma}\rbrac{x-\Proj_{\cX}\rbrac{x-\gamma \nabla f(x)}}.
    \end{align*}
\end{assumption}

\begin{theorem}[Linear convergence under Projected P\L]
\label{thm:local_lmo_projected_pl}
    Let \Cref{ass:projected_pl} hold for some $\gamma \in (0,\nicefrac{1}{L}]$, and let $f$ be $L$--smooth on an open set containing $\cX$.
    Assume that the constrained minimum $f_\star = \min_{x \in \cX} f(x)$ is attained.
    Let $\{x_k\}_{k \ge 0}$ be generated by
    \begin{align*}
        x_{k+1} \in \argmin_{z \in \cX \cap \cB(x_k,t_k)} \inner{\nabla f(x_k)}{z},
    \end{align*}
    with radii
    \begin{equation} \label{eq:-0-08uyh9f8y9gfg9_09uv}
        t_k \eqdef \gamma \norm{G_\gamma(x_k)}.
    \end{equation}
    Then, for every $k \ge 0$,
    \begin{align*}
        f(x_{k+1}) \le f(x_k) - \frac{\gamma}{2}\norm{G_\gamma(x_k)}^2.
    \end{align*}
    Consequently, for any $k \geq 0$,
    \begin{align*}
        f(x_k) - f_\star \le \rbrac{1 -  \gamma \mu}^k \rbrac{f(x_0) - f_\star}.
    \end{align*}
    In particular, if $\gamma = \nicefrac{1}{L}$, then
    \begin{align*}
        f(x_k) - f_\star \le \rbrac{1 - \frac{\mu}{L}}^k \rbrac{f(x_0) - f_\star}.
    \end{align*}
\end{theorem}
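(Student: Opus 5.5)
The plan is to compare the \algname{Local LMO} step with the \algname{PGD} step from the same point. The key observation is that the \algname{PGD} point is itself feasible for the local subproblem, so the local step can only do better on the linear model. Fix $k$ and write $x=x_k$, $g=\nabla f(x_k)$, and $p\eqdef \Proj_{\cX}(x-\gamma g)$, so that $G_\gamma(x)=\frac{1}{\gamma}(x-p)$. If $G_\gamma(x)=0$, then $t_k=0$ and $x_{k+1}=x_k$, so both claims hold trivially. This case also covers stationary points, since $G_\gamma(x)=0$ is equivalent to $0\in \nabla f(x)+N_{\cX}(x)$.

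\textbf{Step 1 (feasibility of the PGD point).} We have $p\in\cX$ and $\|p-x\|=\gamma\|G_\gamma(x)\|=t_k$. Hence $p\in \cX\cap\cB(x_k,t_k)$, and by optimality of $x_{k+1}$ in the local subproblem,
\[
\langle g, x_{k+1}-x\rangle \le \langle g, p-x\rangle .
\]

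\textbf{Step 2 (linear decrease).} Exactly as in the proof of \Cref{prop:pgd_rates}(i), take the projection inequality \eqref{eq:q0000002} with the test point $x_k\in\cX$. This gives
\[
\langle g,p-x\rangle\le -\tfrac{1}{\gamma}\|p-x\|^2=-\gamma\|G_\gamma(x)\|^2 .
\]
Combining with Step 1,
\[
\langle g,x_{k+1}-x\rangle\le -\gamma\|G_\gamma(x)\|^2 .
\]

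\textbf{Step 3 (smoothness).} Since $\cX$ is convex, the segment $[x_k,x_{k+1}]$ lies in $\cX$, where $f$ is $L$--smooth, so the descent lemma applies. Using $\|x_{k+1}-x_k\|\le t_k=\gamma\|G_\gamma(x_k)\|$, we get
\[
f(x_{k+1})\le f(x_k)-\gamma\|G_\gamma(x_k)\|^2+\tfrac{L\gamma^2}{2}\|G_\gamma(x_k)\|^2\le f(x_k)-\tfrac{\gamma}{2}\|G_\gamma(x_k)\|^2 ,
\]
where the last inequality uses $L\gamma\le 1$. This is the claimed one-step descent. We never need \Cref{thm:descent}, nor that $x_{k+1}$ lies on the sphere.

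\textbf{Step 4 (linear rate).} Subtract $f_\star$ and apply \eqref{eq:projected_pl}, which gives $\frac{\gamma}{2}\|G_\gamma(x_k)\|^2\ge \gamma\mu(f(x_k)-f_\star)$. Therefore
\[
f(x_{k+1})-f_\star\le (1-\gamma\mu)\bigl(f(x_k)-f_\star\bigr).
\]
Unrolling this recursion gives the result, and the case $\gamma=\nicefrac1L$ is a direct substitution. Note that $1-\gamma\mu\ge 0$ automatically: if $f(x_k)>f_\star$, then $f(x_{k+1})\ge f_\star$ forces it, and otherwise the iterate is already optimal.

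I expect no serious obstacle. The only delicate point is recognizing Step 1, namely that the radius \eqref{eq:-0-08uyh9f8y9gfg9_09uv} is chosen precisely so that the \algname{PGD} point sits on the boundary of the trust region.
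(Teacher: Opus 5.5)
Your proposal is correct and follows the paper's proof essentially step for step. Both arguments show that the \algname{PGD} point $\Proj_{\cX}(x_k-\gamma\nabla f(x_k))$ lies in $\cX\cap\cB(x_k,t_k)$, combine local-LMO optimality with the projection inequality at $z=x_k$, and then apply the descent lemma with $\|x_{k+1}-x_k\|\le t_k$ and $\gamma\le 1/L$ before invoking projected P\L. Your extra remarks on the $G_\gamma(x_k)=0$ case and on $1-\gamma\mu\ge 0$ are harmless refinements that the paper does not spell out.
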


\begin{proof}
    Fix $k \ge 0$, and for simplicity, define
    \begin{align*}
        y_k \eqdef \Proj_{\cX}\rbrac{x_k - \gamma \nabla f(x_k)}.
    \end{align*}
    Then, by the definition of $G_\gamma$,
    \begin{align}\label{eq:-iny7dX079fg}
        y_k = x_k - \gamma G_\gamma(x_k), \qquad \norm{y_k - x_k} = \gamma \norm{G_\gamma(x_k)} \overset{\eqref{eq:-0-08uyh9f8y9gfg9_09uv}}{=} t_k.
    \end{align}
    In addition, $y_k \in \cX \cap \cB_k$, and thus
    \begin{align*}
        \inner{\nabla f(x_k)}{x_{k+1}} \le \inner{\nabla f(x_k)}{y_k},
    \end{align*}
    or equivalently,
    \begin{align}\label{eq:n8t6td98b96t9df}
        \inner{\nabla f(x_k)}{x_{k+1} - x_k} \le \inner{\nabla f(x_k)}{y_k - x_k}.
    \end{align}
    By the optimality condition of projection onto a convex set, for $y_k = \Proj_{\cX}\rbrac{x_k - \gamma \nabla f(x_k)}$ we have
    \begin{align*}
        \inner{y_k - \rbrac{x_k - \gamma \nabla f(x_k)}}{z - y_k} \ge 0, \qquad \forall z \in \cX.
    \end{align*}
    Choosing $z = x_k \in \cX$, we obtain
    \begin{align*}
        \inner{y_k - x_k + \gamma \nabla f(x_k)}{x_k - y_k} \ge 0.
    \end{align*}
    Expanding this inequality gives
    \begin{align} \label{eq:n8bv9fy09_98y_-8u89fd}
        \inner{\nabla f(x_k)}{y_k - x_k} \le - \frac{1}{\gamma}\norm{y_k - x_k}^2 \overset{\eqref{eq:-iny7dX079fg}}{=} - \gamma \norm{G_\gamma(x_k)}^2.
    \end{align}
    By combining \eqref{eq:n8t6td98b96t9df} and \eqref{eq:n8bv9fy09_98y_-8u89fd}, we get
    \begin{align}\label{eq:gb9v0Z9ybbv87f}
        \inner{\nabla f(x_k)}{x_{k+1} - x_k} \le - \gamma \norm{G_\gamma(x_k)}^2.
    \end{align}
    Since $f$ is $L$--smooth, we have
    \begin{align}\label{eq:ncvx_L-smooth_9hfd}
        f(x_{k+1}) \le f(x_k) + \inner{\nabla f(x_k)}{x_{k+1} - x_k} + \frac{L}{2}\norm{x_{k+1} - x_k}^2.
    \end{align}
    Because $x_{k+1} \in \cB_k$, we also have
    \begin{align} \label{eq:__in980d980f8g}
        \norm{x_{k+1} - x_k} \le t_k \overset{\eqref{eq:-0-08uyh9f8y9gfg9_09uv}}{=} \gamma \norm{G_\gamma(x_k)}.
    \end{align}
    Substituting \eqref{eq:gb9v0Z9ybbv87f} and \eqref{eq:__in980d980f8g} into \eqref{eq:ncvx_L-smooth_9hfd} yields
    \begin{align*}
        f(x_{k+1}) &\le f(x_k) - \gamma \norm{G_\gamma(x_k)}^2 + \frac{L\gamma^2}{2}\norm{G_\gamma(x_k)}^2 \\
        &= f(x_k) - \gamma \rbrac{1 - \frac{L\gamma}{2}} \norm{G_\gamma(x_k)}^2.
    \end{align*}
    Since $\gamma \le \nicefrac{1}{L}$, we have $1 - \nicefrac{L\gamma}{2} \ge \nicefrac{1}{2}$, and thus
    \begin{align*}
        f(x_{k+1}) \le f(x_k) - \frac{\gamma}{2}\norm{G_\gamma(x_k)}^2.
    \end{align*}
    Finally, by \eqref{eq:projected_pl},
    \begin{align*}
        \frac{1}{2}\norm{G_\gamma(x_k)}^2 \ge \mu \rbrac{f(x_k) - f_\star}.
    \end{align*}
    Combining this with the descent estimate gives
    \begin{align*}
        f(x_{k+1}) \le f(x_k) - \gamma \mu \rbrac{f(x_k) - f_\star}.
    \end{align*}
    Rearranging, we obtain
    \begin{align*}
        f(x_{k+1}) - f_\star \le \rbrac{1 -  \gamma\mu}  \rbrac{f(x_k) - f_\star}.
    \end{align*}
    Iterating this inequality yields the result.
\end{proof}

\paragraph{The non-convex smooth case without P\L.}
Even without \Cref{ass:projected_pl}, the same radius choice based on the projected gradient mapping still yields global descent and a sublinear convergence guarantee to first-order stationarity.

\begin{theorem}[Sublinear convergence in the smooth non-convex case]
\label{thm:local_lmo_nonconvex_smooth}
    Let \Cref{ass:main} hold without $f$ being convex.
    Let $f$ be $L$--smooth on an open set containing $\cX$.
    Assume that the constrained minimum $f_\star = \min_{x \in \cX} f(x)$ is attained.
    Fix some $\gamma \in (0,\nicefrac{1}{L}]$ and let $\{x_k\}_{k \ge 0}$ be generated by
    \begin{align*}
        x_{k+1} \in \argmin_{z \in \cX \cap \cB(x_k,t_k)} \inner{\nabla f(x_k)}{z},
    \end{align*}
    with radii
    \begin{align*}
        t_k := \gamma \norm{G_\gamma(x_k)}.
    \end{align*}
    Then, for every $k \ge 0$,
    \begin{align*}
        f(x_{k+1}) \le f(x_k) - \frac{\gamma}{2}\norm{G_\gamma(x_k)}^2.
    \end{align*}
    Consequently, for every $K \ge 1$,
    \begin{align*}
        \min_{0 \le k \le K-1}\norm{G_\gamma(x_k)}^2 \le \frac{2\rbrac{f(x_0)-f_\star}}{\gamma K}.
    \end{align*}
    In particular, if $\gamma = \nicefrac{1}{L}$, then
    \begin{align*}
        \min_{0 \le k \le K-1}\norm{G_{1/L}(x_k)}^2 \le \frac{2L\rbrac{f(x_0)-f_\star}}{K}.
    \end{align*}
\end{theorem}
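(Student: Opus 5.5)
The plan is to reuse, essentially verbatim, the one-step descent argument from the proof of \Cref{thm:local_lmo_projected_pl}; that argument never invoked \Cref{ass:projected_pl} until its very last step. Fix $k\ge 0$ and set $y_k \eqdef \Proj_{\cX}(x_k-\gamma\nabla f(x_k))$. By definition of $G_\gamma$ we have $\norm{y_k-x_k}=\gamma\norm{G_\gamma(x_k)}=t_k$. Hence $y_k\in \cX\cap\cB(x_k,t_k)$ is feasible for the local subproblem, and optimality of $x_{k+1}$ gives
\[
\inner{\nabla f(x_k)}{x_{k+1}-x_k}\le \inner{\nabla f(x_k)}{y_k-x_k}.
\]
Next we apply the variational characterization of the projection, tested at $z=x_k\in\cX$. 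This yields $\inner{\nabla f(x_k)}{y_k-x_k}\le -\tfrac1\gamma\norm{y_k-x_k}^2=-\gamma\norm{G_\gamma(x_k)}^2$.

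Next we plug this into the $L$--smoothness upper bound $f(x_{k+1})\le f(x_k)+\inner{\nabla f(x_k)}{x_{k+1}-x_k}+\tfrac L2\norm{x_{k+1}-x_k}^2$. We also use $\norm{x_{k+1}-x_k}\le t_k=\gamma\norm{G_\gamma(x_k)}$, which holds simply because $x_{k+1}$ lies in the ball; we do not need \Cref{thm:descent}(ii). Together these give
\[
f(x_{k+1})\le f(x_k)-\gamma\bigl(1-\tfrac{L\gamma}{2}\bigr)\norm{G_\gamma(x_k)}^2\le f(x_k)-\tfrac\gamma2\norm{G_\gamma(x_k)}^2,
\]
where the last step uses $\gamma\le 1/L$. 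The degenerate case $G_\gamma(x_k)=0$ gives $t_k=0$, so $x_{k+1}=x_k$, and the inequality holds trivially. This is worth stating explicitly, since the algorithm nominally asks for $t_k>0$.

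Finally, we telescope the descent inequality over $k=0,\dots,K-1$. This gives $\tfrac\gamma2\sum_{k=0}^{K-1}\norm{G_\gamma(x_k)}^2\le f(x_0)-f(x_K)\le f(x_0)-f_\star$. Here we use that $x_K\in\cX$ and that the minimum $f_\star$ is attained, so $f(x_K)\ge f_\star$. Bounding the minimum by the average yields $\min_k\norm{G_\gamma(x_k)}^2\le 2(f(x_0)-f_\star)/(\gamma K)$, and setting $\gamma=1/L$ gives the stated special case.

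No step is a genuine obstacle. The only point needing care is the feasibility of the projected point $y_k$ in the local ball. That feasibility is exactly what the radius choice $t_k=\gamma\norm{G_\gamma(x_k)}$ is engineered to guarantee, and it is the mechanism by which \algname{Local LMO} inherits the \algname{PGD} descent bound.
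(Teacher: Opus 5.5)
Your proposal is correct and follows the paper's proof essentially step for step. Both arguments use the feasibility of $y_k=\Proj_{\cX}(x_k-\gamma\nabla f(x_k))$ in $\cX\cap\cB(x_k,t_k)$, the comparison of linear objective values, the projection inequality tested at $z=x_k$, the descent lemma with $\norm{x_{k+1}-x_k}\le t_k$, and telescoping against $f_\star$. Your separate treatment of the case $t_k=0$ is a harmless addition: the general argument already covers it, since $y_k=x_k$ is then feasible.
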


\begin{proof}
    Fix $k \ge 0$, and for simplicity, define
    \begin{align*}
        y_k := \Proj_{\cX}\rbrac{x_k - \gamma \nabla f(x_k)}.
    \end{align*}
    Then, by the definition of $G_\gamma$,
    \begin{align*}
        y_k = x_k - \gamma G_\gamma(x_k), \qquad \norm{y_k - x_k} = \gamma \norm{G_\gamma(x_k)} = t_k.
    \end{align*}
    Hence $y_k \in \cX \cap \cB_k$, and by the optimality of $x_{k+1}$ for the local linear minimization problem,
    \begin{align*}
        \inner{\nabla f(x_k)}{x_{k+1}} \le \inner{\nabla f(x_k)}{y_k}.
    \end{align*}
    Equivalently, 
    $
        \inner{\nabla f(x_k)}{x_{k+1} - x_k} \le \inner{\nabla f(x_k)}{y_k - x_k}.
    $
    By the optimality condition of projection onto the closed convex set $\cX$ for $y_k = \Proj_{\cX}\rbrac{x_k - \gamma \nabla f(x_k)}$, we have
    \begin{align*}
        \inner{y_k - \rbrac{x_k - \gamma \nabla f(x_k)}}{z - y_k} \ge 0, \qquad \forall z \in \cX.
    \end{align*}
    Choosing $z = x_k \in \cX$, we obtain
    \begin{align*}
        \inner{y_k - x_k + \gamma \nabla f(x_k)}{x_k - y_k} \ge 0.
    \end{align*}
    Expanding this inequality gives
    \begin{align*}
        \inner{\nabla f(x_k)}{y_k - x_k} \le - \frac{1}{\gamma}\norm{y_k - x_k}^2 = - \gamma \norm{G_\gamma(x_k)}^2.
    \end{align*}
    Therefore,
    \begin{align*}
        \inner{\nabla f(x_k)}{x_{k+1} - x_k} \le - \gamma \norm{G_\gamma(x_k)}^2.
    \end{align*}
    Since $f$ is $L$--smooth, we have
    \begin{align*}
        f(x_{k+1}) \le f(x_k) + \inner{\nabla f(x_k)}{x_{k+1} - x_k} + \frac{L}{2}\norm{x_{k+1} - x_k}^2.
    \end{align*}
    Because $x_{k+1} \in \cB_k$, we also have
    \begin{align*}
        \norm{x_{k+1} - x_k} \le t_k = \gamma \norm{G_\gamma(x_k)}.
    \end{align*}
    Substituting the previous two estimates yields
    \begin{align*}
        f(x_{k+1}) &\le f(x_k) - \gamma \norm{G_\gamma(x_k)}^2 + \frac{L\gamma^2}{2}\norm{G_\gamma(x_k)}^2 \\
        &= f(x_k) - \gamma \rbrac{1 - \frac{L\gamma}{2}} \norm{G_\gamma(x_k)}^2.
    \end{align*}
    Since $\gamma \le \nicefrac{1}{L}$, we have $1 - \nicefrac{L\gamma}{2} \ge \nicefrac{1}{2}$, and thus
    \begin{align*}
        f(x_{k+1}) \le f(x_k) - \frac{\gamma}{2}\norm{G_\gamma(x_k)}^2.
    \end{align*}
    Summing this inequality for $k = 0,1,\dots,K-1$, we obtain
    \begin{align*}
        \frac{\gamma}{2}\sum_{k=0}^{K-1}\norm{G_\gamma(x_k)}^2 \le f(x_0) - f(x_K) \le f(x_0) - f_\star.
    \end{align*}
    Hence
    \begin{align*}
        \sum_{k=0}^{K-1}\norm{G_\gamma(x_k)}^2 \le \frac{2\rbrac{f(x_0)-f_\star}}{\gamma}.
    \end{align*}
    Dividing by $K$, we get
    \begin{align*}
        \min_{0 \le k \le K-1}\norm{G_\gamma(x_k)}^2 \le \frac{1}{K}\sum_{k=0}^{K-1}\norm{G_\gamma(x_k)}^2 \le \frac{2\rbrac{f(x_0)-f_\star}}{\gamma K}.
    \end{align*}
    The last claim follows by setting $\gamma = \nicefrac{1}{L}$.
\end{proof}

\begin{remark}[Interpretation of the non-convex guarantee]
\label{rem:local_lmo_nonconvex_stationary_cluster}
    The conclusion of \Cref{thm:local_lmo_nonconvex_smooth} is a rate to approximate first order stationarity, measured by the projected gradient mapping. The proof of the theorem shows that
  \[
        \sum_{k=0}^{\infty}\norm{G_\gamma(x_k)}^2 < \infty,
\]
    and hence
 \[
        \lim \limits_{k \to \infty}\norm{G_\gamma(x_k)} = 0.
\]
    Therefore, if $x_{k_j} \to \bar{x}$ along some subsequence, then by continuity of $\nabla f$ and of the Euclidean projection onto a closed convex set,
    \begin{align*}
        G_\gamma(\bar{x}) = \lim_{j \to \infty} G_\gamma(x_{k_j}) = 0.
    \end{align*}
    Equivalently,
    \begin{align*}
        \bar{x} = \Proj_{\cX}\rbrac{\bar{x} - \gamma \nabla f(\bar{x})},
    \end{align*}
    and hence
    \begin{align*}
        \inner{\nabla f(\bar{x})}{z - \bar{x}} \ge 0, \qquad \forall z \in \cX,
    \end{align*}
    or, equivalently,
    \begin{align*}
        0 \in \nabla f(\bar{x}) + N_{\cX}(\bar{x}).
    \end{align*}
    Thus, every accumulation point is a first order stationary point of the constrained problem.
\end{remark}

\newpage

\section{Stochastic case}
\label{sec:stochastic}

We now extend the algorithm to the stochastic setting. In what follows, we assume that $f$ admits a finite-sum structure of the form
\begin{align*}
    f(x) \eqdef \frac{1}{n}\sum_{i=1}^n f_i(x),
\end{align*}
where each $f_i$ is convex and differentiable on $\cX$.
The resulting method is formalized in \Cref{alg:stochastic_new}.

\begin{algorithm}[!th]
\begin{algorithmic}[1]
\STATE \textbf{Input:} starting point $x_0\in \cX$
\FOR{$k=0,1,2,\ldots$}
    \STATE Sample $i_k$ uniformly at random from $\{1,\dots,n\}$
    \STATE Compute the gradient $g_{i_k} \eqdef \nabla f_{i_k}(x_k)$
    \STATE Choose a radius $t_k\geq0$
    \STATE Compute the next iterate as the solution of the local linear minimization problem
    \begin{align*}
        &x_{k+1}\in \argmin_{z\in \cX\cap \cB(x_k,t_k)} \langle g_{i_k},z\rangle
    \end{align*}
\ENDFOR
\end{algorithmic}
\caption{Stochastic Local LMO}
\label{alg:stochastic_new}
\end{algorithm}

\subsection{Convex objective with bounded gradients}

We first present the theory for the convex case with bounded gradients.
In the subsequent analysis, we assume that each component function $f_i$ is differentiable and admits at least one constrained minimizer.
We introduce the following notation. 
For each $i\in\{1,\dots,n\}$, let
\begin{align*}
    x_{\star,i}\in \argmin_{x\in \cX} f_i(x), \qquad f_i^\star := f_i(x_{\star,i}) = \min_{x\in \cX} f_i(x).
\end{align*}

As in the proof of \Cref{thm:bounded_gradients}, we begin with a stochastic version of the Type-I one-step descent property established in \Cref{thm:descent}.
\begin{theorem}[One-step behavior w.r.t. the sampled component minimizer: Type-I case]
    \label{thm:stochastic_component_type_I}
    Let \Cref{ass:main} hold, with conditions (i) and (iii) as originally stated, and condition (ii) imposed on each component function $f_i$.
    Fix $k\ge 0$, and condition on one realization of $i_k$. Assume that
    \begin{align*}
        &g_{i_k}\neq 0, \qquad 0<t_k\le \frac{\inner{g_{i_k}}{x_k-x_{\star,i_k}}}{\norm{g_{i_k}}}.
    \end{align*}
    Let
    \begin{align*}
        &x_{k+1}\in \argmin_{z\in \cX\cap \cB(x_k,t_k)} \inner{g_{i_k}}{z}.
    \end{align*}
    Then
    \begin{align*}
        &t_k\le \norm{x_k-x_{\star,i_k}}, \qquad \norm{x_{k+1}-x_k} = t_k, \qquad \norm{x_{k+1}-x_{\star,i_k}}^2 \le \norm{x_k-x_{\star,i_k}}^2 - t_k^2.
    \end{align*}
\end{theorem}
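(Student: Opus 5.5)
The plan is to reduce this statement to the Type-I part of \Cref{thm:descent}, applied to the single component $f_{i_k}$ in place of $f$. Once we condition on the realization of $i_k$, the step $x_{k+1}\in\argmin_{z\in \cX\cap\cB(x_k,t_k)}\inner{g_{i_k}}{z}$ is exactly one deterministic \algname{Local LMO} step for the function $f_{i_k}$, with gradient $g_{i_k}=\nabla f_{i_k}(x_k)$. The point $x_{\star,i_k}$ is a constrained minimizer of $f_{i_k}$ over $\cX$. The assumed radius bound is precisely Type-I admissibility \eqref{eq:radius-condition-1} with $(f,x_\star)$ replaced by $(f_{i_k},x_{\star,i_k})$.

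The first step is to check that the hypotheses of \Cref{thm:descent} hold for $f_{i_k}$. We need: $\cX$ nonempty, closed and convex; $f_{i_k}$ differentiable; and $\argmin_{\cX} f_{i_k}$ nonempty, which is the standing assumption that each component admits a constrained minimizer. We also need $g_{i_k}\neq 0$, which is assumed. Well-posedness of the subproblem follows from \Cref{thm:well-posedness}.

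Because a direct citation glosses over whether the Type-I proof uses anything about $f$ beyond its gradient at $x_k$, I would briefly re-trace that proof, as was done in \Cref{prop:type-I-subgradient}. Claim (i) follows by Cauchy--Schwarz: $t_k\le \inner{g_{i_k}}{x_k-x_{\star,i_k}}/\norm{g_{i_k}}\le\norm{x_k-x_{\star,i_k}}$. For the rest, the argument runs as follows.
\begin{itemize}
\item The half-space inequality $\inner{g_{i_k}}{x_{k+1}-x_{\star,i_k}}\ge 0$ follows from the decomposition $x_{k+1}-x_{\star,i_k}=(x_k-x_{\star,i_k})+(x_{k+1}-x_k)$, together with $\norm{x_{k+1}-x_k}\le t_k$ and the radius bound.
\item The KKT condition reads $g_{i_k}+n_k+\lambda_k(x_{k+1}-x_k)=0$, with $n_k\in N_\cX(x_{k+1})$ and $\lambda_k\ge0$. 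The normal-cone inequality $\inner{n_k}{x_{k+1}-x_{\star,i_k}}\ge0$ uses only $x_{\star,i_k}\in\cX$.
\item \textbf{Case $\lambda_k>0$:} the step lands on the sphere and $\inner{x_k-x_{k+1}}{x_{k+1}-x_{\star,i_k}}\ge0$. \Cref{lem:generic-decomposition} then gives the decrease.
\item \textbf{Case $\lambda_k=0$:} the equality case of Cauchy--Schwarz forces $x_{k+1}=x_k-t_k g_{i_k}/\norm{g_{i_k}}$ and an orthogonality relation, which again gives the decrease with equality.
\end{itemize}

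None of these steps uses optimality of $x_{\star,i_k}$ for $f$ (or even for $f_{i_k}$); only $x_{\star,i_k}\in\cX$ is needed. So the conclusion holds verbatim.

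The main subtlety is interpretive rather than technical. The reference point is $x_{\star,i_k}$, which changes with the sampled index, so the result is a per-realization statement: it holds pointwise once $i_k$ is fixed. No expectation or measurability argument enters here. I would add a remark that admissibility needs only $x_{\star,i_k}\in\cX$ together with the radius inequality.
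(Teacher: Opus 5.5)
Your proposal is correct and follows the paper's own argument. The paper likewise reruns the Type-I proof of \Cref{thm:descent} with $g_{i_k}$ in place of $\nabla f(x_k)$ and $x_{\star,i_k}$ in place of $x_\star$, noting that only $x_{\star,i_k}\in\cX$ and the radius bound are used; your re-tracing of the half-space inequality and the two KKT cases just spells out that observation in detail.
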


\begin{proof}
    Since $x_{\star,i_k}\in \cX$ and the admissibility condition is stated with respect to $x_{\star,i_k}$, the proof is identical to that of the stochastic Type-I one-step theorem, with $x_{\star,i_k}$ in place of $x_\star$. Indeed, the argument uses only the facts that $x_{\star,i_k}\in \cX$ and
    \begin{align*}
        &0<t_k\le \frac{\inner{g_{i_k}}{x_k-x_{\star,i_k}}}{\norm{g_{i_k}}}.
    \end{align*}
    Applying the same reasoning therefore yields
    \begin{align*}
        &t_k\le \norm{x_k-x_{\star,i_k}}, \qquad \norm{x_{k+1}-x_k} = t_k, \qquad \norm{x_{k+1}-x_{\star,i_k}}^2 \le \norm{x_k-x_{\star,i_k}}^2 - t_k^2.
    \end{align*}
\end{proof}

The following result provides a stochastic extension of \Cref{thm:bounded_gradients}.
\begin{theorem}[Practical radius choice]
    \label{thm:stochastic_component_rate}
    Let the assumptions of \Cref{thm:stochastic_component_type_I} hold.
    In addition, assume that each component $f_i$ is convex.
    Define $D := \max_{1\le i\le n} \norm{x_{\star,i} - x_\star}$ and let $\{x_k\}_{k\ge 0}$ be generated by \Cref{alg:stochastic_new} with
    \begin{align*}
        &t_k := \begin{cases}
            \dfrac{f_{i_k}(x_k)-f_{i_k}^\star}{\norm{g_{i_k}}}, & g_{i_k}\neq 0,\\[1.2ex]
            0, & g_{i_k}=0.
        \end{cases}
    \end{align*}
    Assume further that each component has $G$--bounded gradients. 
    Then, for every $\eta\in \rbr{0,1}$ and every $K\ge 1$,
    \begin{align*}
        &\frac{1}{K}\sum_{k=0}^{K-1}\mathbb{E}\sbr{\rbr{f(x_k)-f(x_\star)}^2} \le \frac{G^2\norm{x_0-x_\star}^2}{\rbr{1-\eta}K} + \frac{G^2D^2}{\eta\rbr{1-\eta}}.
    \end{align*}
    Moreover, for the averaged iterate $\hat x_K := \frac{1}{K}\sum_{k=0}^{K-1}x_k$,
    we have
    \begin{align*}
        &\mathbb{E}\sbr{f(\hat x_K)-f(x_\star)} \le \sqrt{\frac{G^2\norm{x_0-x_\star}^2}{\rbr{1-\eta}K} + \frac{G^2D^2}{\eta\rbr{1-\eta}}}.
    \end{align*}
\end{theorem}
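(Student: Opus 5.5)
Conditionally on $i_k$, I will use the sampled component minimizer $x_{\star,i_k}$ as the anchor, but track the distance to the fixed global minimizer $x_\star$.

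\textbf{Admissibility.} Fix $k$ and a realization of $i_k$ with $g_{i_k}\neq 0$. Convexity of $f_{i_k}$ gives $f_{i_k}(x_k)-f_{i_k}^\star\le \langle g_{i_k},x_k-x_{\star,i_k}\rangle$. Hence the Polyak-type radius satisfies the Type-I condition of \Cref{thm:stochastic_component_type_I}.

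\textbf{Fej\'er step for $x_\star$.} By \Cref{thm:stochastic_component_type_I}, $\|x_{k+1}-x_k\|=t_k$. I will rerun the KKT argument of \Cref{thm:descent} with anchor $x_\star$ in place of $x_{\star,i_k}$. Write $g_{i_k}+n_k+\lambda_k(x_{k+1}-x_k)=0$ and pair this identity with $x_{k+1}-x_\star$. The normal-cone term is nonnegative. Following Step 1, $\langle g_{i_k},x_{k+1}-x_\star\rangle\ge \langle g_{i_k},x_k-x_\star\rangle-\|g_{i_k}\|t_k$. Substituting $t_k$ and using convexity, this is at least $f_{i_k}(x_k)-f_{i_k}(x_\star)-(f_{i_k}(x_k)-f_{i_k}^\star)=f_{i_k}^\star-f_{i_k}(x_\star)$, which is at least $-\langle \nabla f_{i_k}(x_\star),x_\star-x_{\star,i_k}\rangle\ge -GD$ in magnitude. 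This yields a perturbed inequality
\[
\|x_{k+1}-x_\star\|^2\le \|x_k-x_\star\|^2-t_k^2+2\|x_{k+1}-x_k\|\,\|x_{\star,i_k}-x_\star\|\le \|x_k-x_\star\|^2-t_k^2+2t_kD .
\]
A cleaner route to the same bound is the triangle inequality around $x_{\star,i_k}$ using $\|x_{k+1}-x_{\star,i_k}\|^2\le \|x_k-x_{\star,i_k}\|^2-t_k^2$. In either route, the cross term $2t_kD$ is the key quantity.

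\textbf{Young's inequality.} Apply $2t_kD\le \eta t_k^2+D^2/\eta$ to get $\|x_{k+1}-x_\star\|^2\le\|x_k-x_\star\|^2-(1-\eta)t_k^2+D^2/\eta$. Bounded gradients give $t_k^2\ge (f_{i_k}(x_k)-f_{i_k}^\star)^2/G^2$. Take the conditional expectation over $i_k$ and use Jensen: $\mathbb E_{i_k}(f_{i_k}(x_k)-f_{i_k}^\star)^2\ge (f(x_k)-\frac1n\sum_i f_i^\star)^2$. Since $\frac1n\sum_i f_i^\star\le f(x_\star)$ and $f(x_k)\ge f(x_\star)$, this is at least $(f(x_k)-f(x_\star))^2$. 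Take total expectation, telescope over $k=0,\dots,K-1$, drop $\|x_K-x_\star\|^2$, and divide by $(1-\eta)K/G^2$. This gives the first bound, with the constant $G^2D^2/(\eta(1-\eta))$ once the $D^2/\eta$ term is scaled by $G^2/(1-\eta)$. The averaged-iterate bound then follows as in \Cref{thm:bounded_gradients}: convexity gives $f(\hat x_K)-f(x_\star)\le \frac1K\sum_k(f(x_k)-f(x_\star))$, and Jensen (Cauchy--Schwarz) gives $\mathbb E[\cdot]\le\sqrt{\frac1K\sum_k\mathbb E(\cdot)^2}$.

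\textbf{Main obstacle.} The delicate step is the perturbed Fej\'er inequality, i.e. getting exactly a $2t_kD$ error term rather than something involving $\|x_k-x_\star\|$. I would first try the expansion $\|x_{k+1}-x_\star\|^2=\|x_{k+1}-x_{\star,i_k}\|^2+2\langle x_{k+1}-x_{\star,i_k},x_{\star,i_k}-x_\star\rangle+\|x_{\star,i_k}-x_\star\|^2$, and the same expansion at $x_k$. Subtracting the two, the squared $D$ terms cancel and the inner-product difference becomes $2\langle x_{k+1}-x_k,x_{\star,i_k}-x_\star\rangle\le 2t_kD$, which is exactly what is needed.
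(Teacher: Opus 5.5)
Your proposal is correct and, through the expansion in your final paragraph, follows the paper's proof essentially step for step: Fej\'er decrease toward $x_{\star,i_k}$ from \Cref{thm:stochastic_component_type_I}, re-centering at $x_\star$ to obtain the cross term $2\langle x_{k+1}-x_k,x_{\star,i_k}-x_\star\rangle\le 2t_kD$, Young's inequality, the conditional Jensen step combined with $f_i^\star\le f_i(x_\star)$ and $f(x_k)\ge f(x_\star)$, telescoping, and convexity plus Jensen for $\hat x_K$. Two small caveats, both easily fixed. First, the KKT-with-anchor-$x_\star$ sketch in your second paragraph does not by itself give the $2t_kD$ bound: it only yields $\lambda_k\langle x_k-x_{k+1},x_{k+1}-x_\star\rangle\ge -GD$, and $\lambda_k$ has no usable lower bound, so discard it and keep the expansion argument. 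Second, as the paper does, handle the $t_k=0$ cases separately, because \Cref{thm:stochastic_component_type_I} requires $t_k>0$. These are $g_{i_k}=0$, where convexity gives $f_{i_k}(x_k)=f_{i_k}^\star$ so that $t_k^2\ge (f_{i_k}(x_k)-f_{i_k}^\star)^2/G^2$ still holds, and $g_{i_k}\neq 0$ with $f_{i_k}(x_k)=f_{i_k}^\star$; in both, $x_{k+1}=x_k$ and the one-step inequality is trivial.
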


\begin{proof}
    Fix $k\ge 0$. We first verify that the chosen radius is Type-I admissible with respect to $x_{\star,i_k}$ whenever it is positive.

    If $g_{i_k}=0$, then $t_k=0$, and there is nothing to prove.

    Assume now that $g_{i_k}\neq 0$. Since $x_{\star,i_k}\in \argmin_{x\in \cX} f_{i_k}(x)$, we have $f_{i_k}(x_{\star,i_k}) = f_{i_k}^\star$.
    Hence
    \begin{align*}
        &t_k = \frac{f_{i_k}(x_k)-f_{i_k}(x_{\star,i_k})}{\norm{g_{i_k}}}.
    \end{align*}
    By convexity of $f_{i_k}$,
    \begin{align*}
        &f_{i_k}(x_k)-f_{i_k}(x_{\star,i_k}) \le \inner{\nabla f_{i_k}(x_k)}{x_k-x_{\star,i_k}} = \inner{g_{i_k}}{x_k-x_{\star,i_k}}.
    \end{align*}
    Therefore
    \begin{align*}
        &0\le t_k \le \frac{\inner{g_{i_k}}{x_k-x_{\star,i_k}}}{\norm{g_{i_k}}}.
    \end{align*}
    If $t_k>0$, then \Cref{thm:stochastic_component_type_I} applies and yields
    \begin{align*}
        &\norm{x_{k+1}-x_{\star,i_k}}^2 \le \norm{x_k-x_{\star,i_k}}^2 - t_k^2.
    \end{align*}
    If $t_k=0$, then $x_{k+1}=x_k$, and the same inequality still holds. Thus, in all cases,
    \begin{align*}
        &\norm{x_{k+1}-x_{\star,i_k}}^2 \le \norm{x_k-x_{\star,i_k}}^2 - t_k^2.
    \end{align*}

    We now rewrite this inequality with respect to the global minimizer $x_\star$. Expanding both sides gives
    \begin{align*}
        &\norm{x_{k+1}-x_\star}^2 - 2\inner{x_{k+1}-x_\star}{x_{\star,i_k}-x_\star} + \norm{x_{\star,i_k}-x_\star}^2 \\
        &\qquad \le \norm{x_k-x_\star}^2 - 2\inner{x_k-x_\star}{x_{\star,i_k}-x_\star} + \norm{x_{\star,i_k}-x_\star}^2 - t_k^2.
    \end{align*}
    After cancellation, we obtain
    \begin{align*}
        &\norm{x_{k+1}-x_\star}^2 \le \norm{x_k-x_\star}^2 - t_k^2 + 2\inner{x_{k+1}-x_k}{x_{\star,i_k}-x_\star}.
    \end{align*}
    Since $\norm{x_{k+1}-x_k} = t_k$ when $t_k>0$, and trivially also when $t_k=0$, we have $\norm{x_{k+1}-x_k} = t_k$ in all cases. Therefore, by Cauchy-Schwarz and the definition of $D$,
    \begin{align*}
        &\norm{x_{k+1}-x_\star}^2 \le \norm{x_k-x_\star}^2 - t_k^2 + 2Dt_k.
    \end{align*}
    Fix any $\eta\in \rbr{0,1}$. By Young's inequality,
    \begin{align*}
        &2Dt_k \le \eta t_k^2 + \frac{D^2}{\eta}.
    \end{align*}
    Hence
    \begin{align*}
        &\norm{x_{k+1}-x_\star}^2 \le \norm{x_k-x_\star}^2 - \rbr{1-\eta}t_k^2 + \frac{D^2}{\eta}.
    \end{align*}

    Let $\mathcal{F}_k := \sigma(i_0,\dots,i_{k-1})$. Taking conditional expectation with respect to $\mathcal{F}_k$, we get
    \begin{align*}
        &\mathbb{E}\sbr{\norm{x_{k+1}-x_\star}^2 \mid \mathcal{F}_k} \le \norm{x_k-x_\star}^2 - \rbr{1-\eta}\mathbb{E}\sbr{t_k^2 \mid \mathcal{F}_k} + \frac{D^2}{\eta}.
    \end{align*}

    We next lower bound $\mathbb{E}\sbr{t_k^2 \mid \mathcal{F}_k}$. If $g_{i_k}\neq 0$, then by the definition of $t_k$ and the gradient bound,
    \begin{align*}
        &t_k^2 = \frac{\rbr{f_{i_k}(x_k)-f_{i_k}^\star}^2}{\norm{g_{i_k}}^2} \ge \frac{\rbr{f_{i_k}(x_k)-f_{i_k}^\star}^2}{G^2}.
    \end{align*}
    If $g_{i_k}=0$, then $\nabla f_{i_k}(x_k)=0$. Since $f_{i_k}$ is convex and differentiable, $x_k$ is a global minimizer of $f_{i_k}$ over $\cX$, and hence
    \begin{align*}
        &f_{i_k}(x_k)=f_{i_k}^\star.
    \end{align*}
    Therefore, in this case as well,
    \begin{align*}
        &t_k^2 = 0 = \frac{\rbr{f_{i_k}(x_k)-f_{i_k}^\star}^2}{G^2}.
    \end{align*}
    Thus, in all cases,
    \begin{align*}
        &t_k^2 \ge \frac{\rbr{f_{i_k}(x_k)-f_{i_k}^\star}^2}{G^2}.
    \end{align*}
    Taking conditional expectation yields
    \begin{align*}
        &\mathbb{E}\sbr{t_k^2 \mid \mathcal{F}_k} \ge \frac{\mathbb{E}\sbr{\rbr{f_{i_k}(x_k)-f_{i_k}^\star}^2 \mid \mathcal{F}_k}}{G^2}.
    \end{align*}

    Since $i_k$ is sampled uniformly and independently of the past,
    \begin{align*}
        &\mathbb{E}\sbr{f_{i_k}(x_k)-f_{i_k}^\star \mid \mathcal{F}_k} = \frac{1}{n}\sum_{i=1}^n \rbr{f_i(x_k)-f_i^\star}.
    \end{align*}
    By Jensen's inequality for the convex map $u\mapsto u^2$, we obtain
    \begin{align*}
        &\mathbb{E}\sbr{\rbr{f_{i_k}(x_k)-f_{i_k}^\star}^2 \mid \mathcal{F}_k} \ge \rbr{\frac{1}{n}\sum_{i=1}^n \rbr{f_i(x_k)-f_i^\star}}^2.
    \end{align*}
    Since $f_i^\star \le f_i(x_\star)$ for every $i$, we have
    \begin{align*}
        &\frac{1}{n}\sum_{i=1}^n \rbr{f_i(x_k)-f_i^\star} \ge \frac{1}{n}\sum_{i=1}^n \rbr{f_i(x_k)-f_i(x_\star)} = f(x_k)-f(x_\star).
    \end{align*}
    Moreover, since $x_\star$ minimizes $f$ over $\cX$, we have $f(x_k)-f(x_\star)\ge 0$. Therefore
    \begin{align*}
        &\mathbb{E}\sbr{\rbr{f_{i_k}(x_k)-f_{i_k}^\star}^2 \mid \mathcal{F}_k} \ge \rbr{f(x_k)-f(x_\star)}^2.
    \end{align*}
    Consequently,
    \begin{align*}
        &\mathbb{E}\sbr{t_k^2 \mid \mathcal{F}_k} \ge \frac{\rbr{f(x_k)-f(x_\star)}^2}{G^2}.
    \end{align*}

    Substituting this into the previous conditional inequality yields
    \begin{align*}
        &\mathbb{E}\sbr{\norm{x_{k+1}-x_\star}^2 \mid \mathcal{F}_k} \le \norm{x_k-x_\star}^2 - \frac{1-\eta}{G^2}\rbr{f(x_k)-f(x_\star)}^2 + \frac{D^2}{\eta}.
    \end{align*}
    Taking expectation again and using the tower property, we get
    \begin{align*}
        &\mathbb{E}\sbr{\norm{x_{k+1}-x_\star}^2} \le \mathbb{E}\sbr{\norm{x_k-x_\star}^2} - \frac{1-\eta}{G^2}\mathbb{E}\sbr{\rbr{f(x_k)-f(x_\star)}^2} + \frac{D^2}{\eta}.
    \end{align*}

    Summing from $k=0$ to $K-1$, we obtain
    \begin{align*}
        &\mathbb{E}\sbr{\norm{x_K-x_\star}^2} \le \norm{x_0-x_\star}^2 - \frac{1-\eta}{G^2}\sum_{k=0}^{K-1}\mathbb{E}\sbr{\rbr{f(x_k)-f(x_\star)}^2} + \frac{KD^2}{\eta}.
    \end{align*}
    Since the left-hand side is nonnegative,
    \begin{align*}
        &\frac{1-\eta}{G^2}\sum_{k=0}^{K-1}\mathbb{E}\sbr{\rbr{f(x_k)-f(x_\star)}^2} \le \norm{x_0-x_\star}^2 + \frac{KD^2}{\eta}.
    \end{align*}
    Dividing by $K$ yields
    \begin{align}
        \label{eq:taggg}
        &\frac{1}{K}\sum_{k=0}^{K-1}\mathbb{E}\sbr{\rbr{f(x_k)-f(x_\star)}^2} \le \frac{G^2\norm{x_0-x_\star}^2}{\rbr{1-\eta}K} + \frac{G^2D^2}{\eta\rbr{1-\eta}}.
    \end{align}

    Now define $\hat x_K := \frac{1}{K}\sum_{k=0}^{K-1}x_k$.
    By convexity of $f$,
    \begin{align*}
        &f(\hat x_K)-f(x_\star) \le \frac{1}{K}\sum_{k=0}^{K-1}\rbr{f(x_k)-f(x_\star)}.
    \end{align*}
    Applying Jensen's inequality to the convex map $u\mapsto u^2$, we obtain
    \begin{align*}
        &\rbr{f(\hat x_K)-f(x_\star)}^2 \le \frac{1}{K}\sum_{k=0}^{K-1}\rbr{f(x_k)-f(x_\star)}^2.
    \end{align*}
    Taking expectation and using \eqref{eq:taggg}, we conclude that
    \begin{align*}
        &\mathbb{E}\sbr{\rbr{f(\hat x_K)-f(x_\star)}^2} \le \frac{G^2\norm{x_0-x_\star}^2}{\rbr{1-\eta}K} + \frac{G^2D^2}{\eta\rbr{1-\eta}}.
    \end{align*}
    Finally, by Cauchy-Schwarz,
    \begin{align*}
        &\mathbb{E}\sbr{f(\hat x_K)-f(x_\star)} \le \sqrt{\mathbb{E}\sbr{\rbr{f(\hat x_K)-f(x_\star)}^2}} \le \sqrt{\frac{G^2\norm{x_0-x_\star}^2}{\rbr{1-\eta}K} + \frac{G^2D^2}{\eta\rbr{1-\eta}}}.
    \end{align*}
    This completes the proof.
\end{proof}

\begin{remark}[Sanity check]
    In the single-client setting, we have $x_{\star,i}=x_\star$ and hence $D=0$. Therefore, the additional neighborhood term vanishes, and we recover the result of \Cref{thm:bounded_gradients} up to a constant factor, depending on the choice of $\eta$ when applying Young's inequality. 
    
    The same conclusion holds in the interpolation regime, where there exists a common constrained minimizer $x_\star$ of all functions $f_i$. In this case, we may take $x_{\star,i}=x_\star$ for all $i$, and hence $D=0$ again. Thus, the additional neighborhood term disappears, and the stochastic result reduces to the deterministic bounded-gradient result up to the same constant-factor difference.
\end{remark}

\subsection{Smooth objectives and Type-II admissibility}

For smooth objectives, we will rely on the stochastic extension of Type II admissibility condition in \Cref{thm:descent}.
\begin{theorem}[One-step behavior of stochastic Local LMO: Type-II case]
    \label{thm:stochastic_component_typeII}
    Let \Cref{ass:main} hold, with conditions (i) and (iii) as originally stated, and condition (ii) imposed on each component function $f_i$.
    Let $\{x_k\}_{k\ge 0}\subseteq \cX$ be generated by
    \begin{align*}
        &x_{k+1} \in \argmin_{z\in \cX\cap B(x_k,t_k)} \inner{\nabla f_{i_k}(x_k)}{z},
    \end{align*}
    where $i_k\in\{1,\dots,n\}$. Assume that
    \begin{align*}
        &\nabla f_{i_k}(x_k)\neq \nabla f_{i_k}(x_{\star,i_k}), \quad 0 < t_k \le \frac{\inner{\nabla f_{i_k}(x_k)-\nabla f_{i_k}(x_{\star,i_k})}{x_k-x_{\star,i_k}}}{\norm{\nabla f_{i_k}(x_k)-\nabla f_{i_k}(x_{\star,i_k})}}.
    \end{align*}
    Then\footnote{Consequently,
    \begin{align*}
        &\norm{x_{k+1}-x_\star}^2 \le \norm{x_k-x_\star}^2 - t_k^2 + 2D t_k,
    \end{align*}
    where $D = \max_{1\le i\le n} \norm{x_{\star,i}-x_\star}$}
    \begin{align*}
        &\norm{x_{k+1}-x_{\star,i_k}}^2 \le \norm{x_k-x_{\star,i_k}}^2 - t_k^2.
    \end{align*}
\end{theorem}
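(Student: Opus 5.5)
The plan is to reduce the statement to the deterministic Type-II part of \Cref{thm:descent}. We apply it to the single component $f_{i_k}$ in place of $f$, and to its constrained minimizer $x_{\star,i_k}$ in place of $x_\star$. Conditioning on the realization of $i_k$, the point $x_{k+1}$ is exactly one \algname{Local LMO} step for the function $h\eqdef f_{i_k}$ from $x_k\in\cX$ with radius $t_k$:
\[
x_{k+1}\in\argmin_{z\in\cX\cap\cB(x_k,t_k)}\inner{\nabla h(x_k)}{z}.
\]
Moreover, $x_{\star,i_k}\in\argmin_{x\in\cX}h(x)$ by definition, and $h$ is differentiable by the assumed version of \Cref{ass:main}(ii). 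The hypotheses $\nabla h(x_k)\neq\nabla h(x_{\star,i_k})$ and the bound on $t_k$ are then precisely the Type-II admissibility condition \eqref{eq:radius-condition-2} for $h$ with minimizer $x_{\star,i_k}$. \Cref{thm:descent}(iii) therefore gives $\norm{x_{k+1}-x_{\star,i_k}}^2\le\norm{x_k-x_{\star,i_k}}^2-t_k^2$, and part (ii) gives $\norm{x_{k+1}-x_k}=t_k$.

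For safety, I would also check that the proof in \Cref{sec:proof-II} uses only the following ingredients:
\begin{itemize}
\item the perturbed-monotonicity step, obtained from Cauchy--Schwarz and the radius bound;
\item the first-order optimality condition $\inner{\nabla h(x_{\star,i_k})}{z-x_{\star,i_k}}\ge 0$ for all $z\in\cX$, which holds because $x_{\star,i_k}$ minimizes $h$ over the convex set $\cX$;
\item the KKT/normal-cone decomposition of the local LMO subproblem;
\item the two-case analysis on $\lambda_k$.
\end{itemize}
None of these uses convexity of $h$ or any relation to $f$, so the substitution is valid verbatim.

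For the footnoted consequence, I would expand both squared norms around $x_\star$, writing $x_{\star,i_k}=x_\star+(x_{\star,i_k}-x_\star)$. This is the same algebra as in the proof of \Cref{thm:stochastic_component_rate}, and after cancellation it gives
\[
\norm{x_{k+1}-x_\star}^2\le\norm{x_k-x_\star}^2-t_k^2+2\inner{x_{k+1}-x_k}{x_{\star,i_k}-x_\star}.
\]
Finally, Cauchy--Schwarz together with $\norm{x_{k+1}-x_k}=t_k$ and $\norm{x_{\star,i_k}-x_\star}\le D$ bounds the cross term by $2Dt_k$.

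The only delicate point is confirming that the optimality condition at the component minimizer is the one invoked. In Step~2 of the Type-II proof it must be the condition for $h$ at $x_{\star,i_k}$, not the condition for $f$ at $x_\star$. Once the roles are renamed consistently, this is the case, and no genuine obstacle remains.
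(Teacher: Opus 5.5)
Your proposal is correct and follows the paper's proof. You apply the Type-II case of \Cref{thm:descent} to the component $f_{i_k}$ with constrained minimizer $x_{\star,i_k}$, then expand around $x_\star$ and bound the cross term $2\langle x_{k+1}-x_k, x_{\star,i_k}-x_\star\rangle$ by $2Dt_k$ via Cauchy--Schwarz. The only cosmetic difference is that you invoke $\|x_{k+1}-x_k\|=t_k$ from part (ii), whereas the paper uses $\|x_{k+1}-x_k\|\le t_k$ from membership in the ball, which already suffices.
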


\begin{proof}
    Fix $k\ge 0$. Since $x_{\star, i_k}\in\argmin_{x\in\cX} f_{i_k}(x)$, \Cref{thm:descent}(ii) applied to the component objective $f_{i_k}$ yields
    \begin{align*}
        &\norm{x_{k+1}-x_{\star,i_k}}^2 \le \norm{x_k-x_{\star,i_k }}^2 - t_k^2.
    \end{align*}
    This proves the main claim. For the consequence,
    \begin{align*}
        \norm{x_{k+1}-x_\star}^2
        &= \norm{x_{k+1}-x_{\star,i_k} + x_{\star,i_k}-x_\star}^2 \\
        &= \norm{x_{k+1}-x_{\star,i_k}}^2 + 2\inner{x_{k+1}-x_{\star,i_k}}{x_{\star,i_k}-x_\star} + \norm{x_{\star,i_k}-x_\star}^2 \\
        &\le \norm{x_k-x_{\star,i_k}}^2 - t_k^2 + 2\inner{x_{k+1}-x_{\star,i_k}}{x_{\star,i_k}-x_\star} + \norm{x_{\star,i_k}-x_\star}^2 \\
        &= \norm{x_k-x_\star}^2 - t_k^2 + 2\inner{x_{k+1}-x_k}{x_{\star,i_k}-x_\star}.
    \end{align*}
    Since $x_{k+1} \in \cB(x_k,t_k)$, we have $\norm{x_{k+1}-x_k}\le t_k$.
    Therefore,
    \begin{align*}
        &2\inner{x_{k+1}-x_k}{x_{\star,i_k}-x_\star} \le 2\norm{x_{k+1}-x_k}\norm{x_{\star,i_k}-x_\star} \le 2D t_k.
    \end{align*}
    Substituting this into the previous inequality gives
    \begin{align*}
        &\norm{x_{k+1}-x_\star}^2 \le \norm{x_k-x_\star}^2 - t_k^2 + 2D t_k.
    \end{align*}
\end{proof}

\paragraph{Strongly convex case.}
Under the additional assumption of strong convexity, \Cref{thm:stochastic_component_typeII} yields the following result.
\begin{theorem}[Linear convergence]
    \label{thm:stochastic_component_smooth_strongly_convex}
    Let the assumptions of \Cref{thm:stochastic_component_typeII} hold.
    Assume for each $i\in\{1,\dots,n\}$, the function $f_i$ is $\mu_i$-strongly convex and $L_i$-smooth on an open convex set containing $\cX$, with $0<\mu_i\le L_i$.
    Define
    \begin{align*}
        &\theta_i := \frac{2\sqrt{\mu_i L_i}}{L_i+\mu_i}, \qquad \rho_i := \frac{L_i-\mu_i}{L_i+\mu_i} = \sqrt{1-\theta_i^2}, \qquad \bar\rho := \frac{1}{n}\sum_{i=1}^n \rho_i.
    \end{align*}
    Choose $t_k := \theta_{i_k}\norm{x_k-x_{\star,i_k}}$.
    Then, for every $k\ge 0$,
    \begin{align}
        \label{eq:first-descent}
        \norm{x_{k+1}-x_{\star,i_k}}^2 \le \rho_{i_k}^2 \norm{x_k-x_{\star,i_k}}^2.
    \end{align}
    Consequently, for every $k\ge 0$,
    \begin{align*}
        &\mathbb{E}\sbr{\norm{x_k-x_\star}} \le \bar\rho^k \norm{x_0-x_\star} + \frac{1+\bar\rho}{1-\bar\rho}D.
    \end{align*}
\end{theorem}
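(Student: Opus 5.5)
The plan has two parts: first the one-step contraction \eqref{eq:first-descent} with respect to the sampled minimizer, then a transfer of this bound to the global minimizer $x_\star$ via the triangle inequality, followed by unrolling in expectation.

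\textbf{Step 1 (one-step contraction).} Fix $k$ and condition on $i_k$. If $x_k=x_{\star,i_k}$, then $t_k=0$ and $x_{k+1}=x_k$, so \eqref{eq:first-descent} holds trivially. Otherwise, \Cref{lem:strict_convexity} applied to the strongly convex $f_{i_k}$ gives $\nabla f_{i_k}(x_k)\neq\nabla f_{i_k}(x_{\star,i_k})$. \Cref{lem:strong-smooth-interpolation} with $x=x_k$, $y=x_{\star,i_k}$ and constants $(\mu_{i_k},L_{i_k})$ then yields
\[
t_k=\theta_{i_k}\|x_k-x_{\star,i_k}\|\le \frac{\langle \nabla f_{i_k}(x_k)-\nabla f_{i_k}(x_{\star,i_k}),x_k-x_{\star,i_k}\rangle}{\|\nabla f_{i_k}(x_k)-\nabla f_{i_k}(x_{\star,i_k})\|}.
\]
This is exactly the Type-II admissibility required in \Cref{thm:stochastic_component_typeII}. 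That theorem gives $\|x_{k+1}-x_{\star,i_k}\|^2\le(1-\theta_{i_k}^2)\|x_k-x_{\star,i_k}\|^2=\rho_{i_k}^2\|x_k-x_{\star,i_k}\|^2$, which is \eqref{eq:first-descent}.

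\textbf{Step 2 (transfer to $x_\star$).} Take square roots in \eqref{eq:first-descent} and apply the triangle inequality twice, using $\|x_{\star,i_k}-x_\star\|\le D$:
\[
\|x_{k+1}-x_\star\|\le \|x_{k+1}-x_{\star,i_k}\|+D\le \rho_{i_k}\|x_k-x_{\star,i_k}\|+D\le \rho_{i_k}\|x_k-x_\star\|+(1+\rho_{i_k})D.
\]
We work with distances rather than squared distances so that the recursion stays linear and the averaging over $i_k$ is clean.

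\textbf{Step 3 (expectation and unrolling).} Take the conditional expectation with respect to $\mathcal F_k=\sigma(i_0,\dots,i_{k-1})$. Since $i_k$ is uniform and independent of $x_k$,
\[
\mathbb E[\|x_{k+1}-x_\star\|\mid\mathcal F_k]\le\bar\rho\|x_k-x_\star\|+(1+\bar\rho)D.
\]
By the tower property, $e_{k+1}\le\bar\rho e_k+(1+\bar\rho)D$ with $e_k\eqdef\mathbb E\|x_k-x_\star\|$. Since $\rho_i<1$, we have $\bar\rho<1$. Unrolling gives
\[
e_k\le\bar\rho^k e_0+(1+\bar\rho)D\sum_{j<k}\bar\rho^j\le\bar\rho^k\|x_0-x_\star\|+\frac{1+\bar\rho}{1-\bar\rho}D.
\]

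\textbf{Main obstacle.} The only delicate point is Step 1: admissibility must be checked against the \emph{component} minimizer $x_{\star,i_k}$, and the degenerate case $x_k=x_{\star,i_k}$ (where $t_k=0$ and the strict-positivity hypothesis of \Cref{thm:stochastic_component_typeII} fails) must be handled separately. A secondary check is that \Cref{lem:strong-smooth-interpolation}, stated on $\R^d$, is applied only to points of $\cX$. This is legitimate because $f_i$ is assumed smooth and strongly convex on an open convex set containing $\cX$.
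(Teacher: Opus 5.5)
Your proposal is correct and follows the paper's proof essentially step by step. You verify admissibility via \Cref{lem:strong-smooth-interpolation}, apply the Type-II one-step bound of \Cref{thm:stochastic_component_typeII} with respect to $x_{\star,i_k}$, use the triangle inequality on unsquared distances, and finish with conditional expectation and unrolling. The only differences are cosmetic---you bound $\|x_{\star,i_k}-x_\star\|$ by $D$ before averaging rather than after, and you explicitly invoke \Cref{lem:strict_convexity} to guarantee the nonzero gradient difference that the paper simply takes from the hypotheses of \Cref{thm:stochastic_component_typeII}.
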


\begin{proof}
    Fix $k\ge 0$.
    If $x_k = x_{\star,i_k}$, then $t_k=0$, and \eqref{eq:first-descent} is trivial. So assume $x_k\neq x_{\star,i_k}$.
    Since $f_{i_k}$ is $L_{i_k}$-smooth and $\mu_{i_k}$-strongly convex, \Cref{lem:strong-smooth-interpolation} applied to $f_{i_k}$ yields
    \begin{align*}
        &\theta_{i_k}\norm{x_k-x_{\star,i_k}} \le \frac{\inner{\nabla f_{i_k}(x_k)-\nabla f_{i_k}(x_{\star,i_k})}{x_k-x_{\star,i_k}}}{\norm{\nabla f_{i_k}(x_k)-\nabla f_{i_k}(x_{\star,i_k})}}.
    \end{align*}
    Since the assumptions of \Cref{thm:stochastic_component_typeII} are satisfied, we obtain
    \begin{align*}
        \norm{x_{k+1}-x_{\star,i_k}}^2 &\le \norm{x_k-x_{\star,i_k}}^2 - \theta_{i_k}^2 \norm{x_k-x_{\star,i_k}}^2 \\
        &= \rbrac{1-\theta_{i_k}^2}\norm{x_k-x_{\star,i_k}}^2 \\
        &= \rho_{i_k}^2 \norm{x_k-x_{\star,i_k}}^2.
    \end{align*}
    This proves \eqref{eq:first-descent}. Taking square roots, we get $\norm{x_{k+1}-x_{\star,i_k}} \le \rho_{i_k}\norm{x_k-x_{\star,i_k}}$.
    Using the triangle inequality twice,
    \begin{align*}
        \norm{x_{k+1}-x_\star} &\le \norm{x_{k+1}-x_{\star,i_k}} + \norm{x_{\star,i_k}-x_\star} \\
        &\le \rho_{i_k}\norm{x_k-x_{\star,i_k}} + \norm{x_{\star,i_k}-x_\star} \\
        &\le \rho_{i_k}\rbrac{\norm{x_k-x_\star} + \norm{x_\star-x_{\star,i_k}}} + \norm{x_{\star,i_k}-x_\star} \\
        &= \rho_{i_k}\norm{x_k-x_\star} + \rbrac{1+\rho_{i_k}}\norm{x_{\star,i_k}-x_\star}.
    \end{align*}
    Since $i_k$ is sampled uniformly from $\{1,\dots,n\}$, taking conditional expectation with respect to $\cF_k$ yields
    \begin{align*}
        \mathbb{E}\sbr{\norm{x_{k+1}-x_\star}\mid \mathcal{F}_k} &\le \mathbb{E}\sbr{\rho_{i_k}\norm{x_k-x_\star} + \rbrac{1+\rho_{i_k}}\norm{x_{\star,i_k}-x_\star}\mid \mathcal{F}_k} \\
        &= \bar\rho \norm{x_k-x_\star} + \frac{1}{n}\sum_{i=1}^n \rbrac{1+\rho_i}\norm{x_{\star,i}-x_\star}.
    \end{align*}
    Taking expectation again and using tower property, we obtain
    \begin{align*}
        &\mathbb{E}\sbr{\norm{x_{k+1}-x_\star}} \le \bar\rho\,\mathbb{E}\sbr{\norm{x_k-x_\star}} + \frac{1}{n}\sum_{i=1}^n \rbrac{1+\rho_i}\norm{x_{\star,i}-x_\star}.
    \end{align*}
    Iterating this recursion yields
    \begin{align*}
        &\mathbb{E}\sbr{\norm{x_k-x_\star}} \le \bar\rho^k\norm{x_0-x_\star} + \frac{1-\bar\rho^k}{1-\bar\rho}\frac{1}{n}\sum_{i=1}^n \rbrac{1+\rho_i}\norm{x_{\star,i}-x_\star}.
    \end{align*}
    Finally, since $\norm{x_{\star,i}-x_\star}\le D$ for every $i$,
    \begin{align*}
        \frac{1}{n}\sum_{i=1}^n \rbrac{1+\rho_i}\norm{x_{\star,i}-x_\star} &\le \frac{1}{n}\sum_{i=1}^n \rbrac{1+\rho_i}D = \rbrac{1+\bar\rho}D,
    \end{align*}
    and therefore
    \begin{align*}
        &\mathbb{E}\sbr{\norm{x_k-x_\star}} \le \bar\rho^k\norm{x_0-x_\star} + \frac{1+\bar\rho}{1-\bar\rho}D.
    \end{align*}
\end{proof}

\paragraph{Convex case.}
In the absence of strong convexity, the following result applies.
\begin{theorem}
    \label{thm:stochastic_component_smooth_convex_global}
    Let the assumptions of \Cref{thm:stochastic_component_typeII} hold. 
    Moreover, assume that for each $i\in\{1,\dots,n\}$, the function $f_i$ is convex and $L_i$-smooth on an open convex set containing $\cX$. Define
    \begin{align*}
        &L_{\max} := \max_{1\le i\le n} L_i, \qquad \bar L_2^2 := \frac{1}{n}\sum_{i=1}^n L_i^2.
    \end{align*}
    Suppose that $i_k$ is sampled uniformly from $\{1,\dots,n\}$ and choose
    \begin{align*}
        &t_k := \begin{cases}
            \dfrac{\norm{\nabla f_{i_k}(x_k)-\nabla f_{i_k}(x_{\star,i_k})}}{L_{i_k}}, & \nabla f_{i_k}(x_k)\neq \nabla f_{i_k}(x_{\star,i_k}), \\[1.2ex]
            0, & \nabla f_{i_k}(x_k)=\nabla f_{i_k}(x_{\star,i_k}).
        \end{cases}
    \end{align*}
    Then, for every $K\ge 1$,\footnote{In the common-minimizer regime $x_{\star,i}=x_\star$ for all $i$, we have $D=0$, and therefore
    \begin{align*}
        &\min_{0\le k\le K-1}\mathbb{E}\norm{\nabla f(x_k)-\nabla f(x_\star)} \le \frac{2L_{\max}\norm{x_0-x_\star}}{\sqrt{K}}.
    \end{align*}}
    \begin{align*}
        &\min_{0\le k\le K-1}\mathbb{E}\norm{\nabla f(x_k)-\nabla f(x_\star)} \le \sqrt{\frac{4L_{\max}^2\norm{x_0-x_\star}^2}{K} + \rbrac{8L_{\max}^2 + 2\bar L_2^2}D^2}.
    \end{align*}
\end{theorem}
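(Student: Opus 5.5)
The plan is to combine the per-component Type-II one-step inequality of \Cref{thm:stochastic_component_typeII} (in its $x_\star$-centered form from the footnote) with cocoercivity of each $f_i$. First I check admissibility of the radius. By $L_i$-smoothness and convexity of $f_{i_k}$,
\[
\tfrac{1}{L_{i_k}}\|\nabla f_{i_k}(x_k)-\nabla f_{i_k}(x_{\star,i_k})\|^2\le \langle \nabla f_{i_k}(x_k)-\nabla f_{i_k}(x_{\star,i_k}),x_k-x_{\star,i_k}\rangle .
\]
So the chosen $t_k$ satisfies the Type-II condition whenever it is positive, and the degenerate case $t_k=0$ is trivial since then $x_{k+1}=x_k$. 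This gives
\[
\|x_{k+1}-x_\star\|^2\le \|x_k-x_\star\|^2-t_k^2+2Dt_k .
\]
Applying Young's inequality $2Dt_k\le \tfrac12 t_k^2+2D^2$ yields $\|x_{k+1}-x_\star\|^2\le \|x_k-x_\star\|^2-\tfrac12 t_k^2+2D^2$.

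Next I lower bound $t_k$ in terms of the quantity of interest. With $a_k \eqdef \|\nabla f_{i_k}(x_k)-\nabla f_{i_k}(x_{\star,i_k})\|$, we have $t_k=a_k/L_{i_k}\ge a_k/L_{\max}$. I then decompose
\[
\nabla f_{i_k}(x_k)-\nabla f_{i_k}(x_\star)=\bigl(\nabla f_{i_k}(x_k)-\nabla f_{i_k}(x_{\star,i_k})\bigr)+\bigl(\nabla f_{i_k}(x_{\star,i_k})-\nabla f_{i_k}(x_\star)\bigr),
\]
and bound the second bracket by $L_{i_k}D$. Squaring gives
\[
\|\nabla f_{i_k}(x_k)-\nabla f_{i_k}(x_\star)\|^2\le 2a_k^2+2L_{i_k}^2D^2,
\]
hence $a_k^2\ge \tfrac12\|\nabla f_{i_k}(x_k)-\nabla f_{i_k}(x_\star)\|^2-L_{i_k}^2D^2$. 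Taking conditional expectation over uniform $i_k$ and using Jensen, $\mathbb{E}_{i_k}\|\nabla f_{i_k}(x_k)-\nabla f_{i_k}(x_\star)\|^2\ge \|\nabla f(x_k)-\nabla f(x_\star)\|^2$, and $\mathbb{E}L_{i_k}^2=\bar L_2^2$. Therefore
\[
\mathbb{E}[t_k^2\mid\cF_k]\ge \tfrac{1}{2L_{\max}^2}\|\nabla f(x_k)-\nabla f(x_\star)\|^2-\tfrac{\bar L_2^2}{L_{\max}^2}D^2 .
\]

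Finally I substitute this into the recursion, take total expectations, telescope over $k=0,\dots,K-1$, and drop $\mathbb{E}\|x_K-x_\star\|^2$. The result is
\[
\tfrac{1}{4L_{\max}^2}\sum_k\mathbb{E}\|\nabla f(x_k)-\nabla f(x_\star)\|^2\le \|x_0-x_\star\|^2+K\Bigl(2+\tfrac{\bar L_2^2}{2L_{\max}^2}\Bigr)D^2 .
\]
Dividing by $K$, multiplying by $4L_{\max}^2$, and bounding the minimum by the average gives
\[
\min_k\mathbb{E}\|\cdot\|^2\le \tfrac{4L_{\max}^2\|x_0-x_\star\|^2}{K}+(8L_{\max}^2+2\bar L_2^2)D^2 ,
\]
which reproduces exactly the constants in the statement. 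The last step is $\min_k\mathbb{E}\|\cdot\|\le \min_k\sqrt{\mathbb{E}\|\cdot\|^2}$, by Jensen.

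The main obstacle is the bookkeeping in the second paragraph: choosing the Young constant $\tfrac12$ and the factor-$2$ splitting so that the constants come out as $4L_{\max}^2$ and $8L_{\max}^2+2\bar L_2^2$, and making sure the $D^2$ terms from Young and from the gradient-mismatch decomposition are tracked separately. I would also check that the footnote inequality of \Cref{thm:stochastic_component_typeII} uses $\|x_{k+1}-x_k\|\le t_k$ only, so that it remains valid when $t_k=0$.
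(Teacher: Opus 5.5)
Your proof is correct and uses the same ingredients as the paper's: cocoercivity for Type-II admissibility, the $x_\star$-centered recursion from the footnote, Young's inequality $2Dt_k\le\frac12 t_k^2+2D^2$, the two-term split of $\nabla f_i(x_k)-\nabla f_i(x_\star)$ through $x_{\star,i}$, and Jensen. The only structural difference is the order of steps: you convert to $\|\nabla f(x_k)-\nabla f(x_\star)\|^2$ inside the conditional expectation before telescoping, whereas the paper first telescopes $\frac1n\sum_i\|\nabla f_i(x_k)-\nabla f_i(x_{\star,i})\|^2/L_i^2$ and converts afterwards. Because you keep $L_{i_k}^2D^2$ in the mismatch term, you get exactly the stated $(8L_{\max}^2+2\bar L_2^2)D^2$. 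The paper's written argument instead bounds $L_iD\le L_{\max}D$ and ends with $10L_{\max}^2D^2\ge(8L_{\max}^2+2\bar L_2^2)D^2$, so your bookkeeping is the one that actually delivers the constant claimed in the theorem.
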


\begin{proof}
    Fix $k\ge 0$.
    If $\nabla f_{i_k}(x_k)=\nabla f_{i_k}(x_{\star,i_k})$, then $t_k=0$, and the proof is trivial. 
    Assume now that $\nabla f_{i_k}(x_k)\neq \nabla f_{i_k}(x_{\star,i_k})$.
    Since $f_{i_k}$ is convex and $L_{i_k}$-smooth, cocoercivity gives
    \begin{align*}
        &\frac{1}{L_{i_k}}\norm{\nabla f_{i_k}(x_k)-\nabla f_{i_k}(x_{\star,i_k})}^2 \le \inner{\nabla f_{i_k}(x_k)-\nabla f_{i_k}(x_{\star,i_k})}{x_k-x_{\star,i_k}}.
    \end{align*}
    Hence
    \begin{align*}
        &t_k = \frac{\norm{\nabla f_{i_k}(x_k)-\nabla f_{i_k}(x_{\star,i_k})}}{L_{i_k}} \le \frac{\inner{\nabla f_{i_k}(x_k)-\nabla f_{i_k}(x_{\star,i_k})}{x_k-x_{\star,i_k}}}{\norm{\nabla f_{i_k}(x_k)-\nabla f_{i_k}(x_{\star,i_k})}}.
    \end{align*}
    Therefore the assumptions of \Cref{thm:stochastic_component_typeII} are satisfied, and we obtain
    \begin{align*}
        &\norm{x_{k+1}-x_\star}^2 \le \norm{x_k-x_\star}^2 - \frac{\norm{\nabla f_{i_k}(x_k)-\nabla f_{i_k}(x_{\star,i_k})}^2}{L_{i_k}^2} + \frac{2D}{L_{i_k}}\norm{\nabla f_{i_k}(x_k)-\nabla f_{i_k}(x_{\star,i_k})}.
    \end{align*}
    By Young's inequality,
    \begin{align*}
        &\frac{2D}{L_{i_k}}\norm{\nabla f_{i_k}(x_k)-\nabla f_{i_k}(x_{\star,i_k})} \le \frac{1}{2}\frac{\norm{\nabla f_{i_k}(x_k)-\nabla f_{i_k}(x_{\star,i_k})}^2}{L_{i_k}^2} + 2D^2.
    \end{align*}
    Hence
    \begin{align*}
        &\norm{x_{k+1}-x_\star}^2 \le \norm{x_k-x_\star}^2 - \frac{1}{2}\frac{\norm{\nabla f_{i_k}(x_k)-\nabla f_{i_k}(x_{\star,i_k})}^2}{L_{i_k}^2} + 2D^2.
    \end{align*}
    Taking conditional expectation with respect to $\cF_k$ yields
    \begin{align*}
        &\mathbb{E}\sbrac{\norm{x_{k+1}-x_\star}^2 \mid \mathcal{F}_k} \le \norm{x_k-x_\star}^2 - \frac{1}{2n}\sum_{i=1}^n \frac{\norm{\nabla f_i(x_k)-\nabla f_i(x_{\star,i})}^2}{L_i^2} + 2D^2.
    \end{align*}
    Taking expectation again, using the tower property and summing from $k=0$ to $K-1$, we get
    \begin{align*}
        &\mathbb{E}\sbrac{\norm{x_K-x_\star}^2} \le \norm{x_0-x_\star}^2 - \frac{1}{2}\sum_{k=0}^{K-1}\mathbb{E}\sbrac{\frac{1}{n}\sum_{i=1}^n \frac{\norm{\nabla f_i(x_k)-\nabla f_i(x_{\star,i})}^2}{L_i^2}} + 2KD^2.
    \end{align*}
    Since $\mathbb{E}\sbrac{\norm{x_K-x_\star}^2} \ge 0$, it follows that
    \begin{align}
        \label{eq:taggg222}
        &\frac{1}{K}\sum_{k=0}^{K-1}\mathbb{E}\sbrac{\frac{1}{n}\sum_{i=1}^n \frac{\norm{\nabla f_i(x_k)-\nabla f_i(x_{\star,i})}^2}{L_i^2}} \le \frac{2\norm{x_0-x_\star}^2}{K} + 4D^2.
    \end{align}

    For each $i\in\{1,\dots,n\}$, we have
    \begin{align*}
        &\nabla f_i(x_k)-\nabla f_i(x_\star) = \rbrac{\nabla f_i(x_k)-\nabla f_i(x_{\star,i})} + \rbrac{\nabla f_i(x_{\star,i})-\nabla f_i(x_\star)}.
    \end{align*}
    Hence,
    \begin{align*}
        &\norm{\nabla f_i(x_k)-\nabla f_i(x_\star)}^2 \le 2\norm{\nabla f_i(x_k)-\nabla f_i(x_{\star,i})}^2 + 2\norm{\nabla f_i(x_{\star,i})-\nabla f_i(x_\star)}^2.
    \end{align*}
    By $L_i$-smoothness and the definition of $D$,
    \begin{align*}
        &\norm{\nabla f_i(x_{\star,i})-\nabla f_i(x_\star)} \le L_i \norm{x_{\star,i}-x_\star} \le L_i D \le L_{\max}D.
    \end{align*}
    Therefore,
    \begin{align*}
        &\norm{\nabla f_i(x_k)-\nabla f_i(x_\star)}^2 \le 2\norm{\nabla f_i(x_k)-\nabla f_i(x_{\star,i})}^2 + 2L_{\max}^2D^2.
    \end{align*}
    Averaging over $i=1,\dots,n$, we obtain
    \begin{align*}
        &\frac{1}{n}\sum_{i=1}^n \norm{\nabla f_i(x_k)-\nabla f_i(x_\star)}^2 \le \frac{2}{n}\sum_{i=1}^n \norm{\nabla f_i(x_k)-\nabla f_i(x_{\star,i})}^2 + 2L_{\max}^2D^2.
    \end{align*}
    Since $L_i\le L_{\max}$ for every $i$,
    \begin{align*}
        &\norm{\nabla f_i(x_k)-\nabla f_i(x_{\star,i})}^2 \le L_{\max}^2 \frac{\norm{\nabla f_i(x_k)-\nabla f_i(x_{\star,i})}^2}{L_i^2}.
    \end{align*}
    Thus
    \begin{align*}
        &\frac{1}{n}\sum_{i=1}^n \norm{\nabla f_i(x_k)-\nabla f_i(x_\star)}^2 \le \frac{2L_{\max}^2}{n}\sum_{i=1}^n \frac{\norm{\nabla f_i(x_k)-\nabla f_i(x_{\star,i})}^2}{L_i^2} + 2L_{\max}^2D^2.
    \end{align*}
    Averaging over $k=0,\dots,K-1$ and using \eqref{eq:taggg222}, we conclude that
    \begin{align*}
        \frac{1}{K}\sum_{k=0}^{K-1}\mathbb{E}\sbrac{\frac{1}{n}\sum_{i=1}^n \norm{\nabla f_i(x_k)-\nabla f_i(x_\star)}^2} &\le 2L_{\max}^2 \rbrac{\frac{2\norm{x_0-x_\star}^2}{K} + 4D^2} + 2L_{\max}^2D^2 \\
        &= \frac{4L_{\max}^2\norm{x_0-x_\star}^2}{K} + 10L_{\max}^2D^2.
    \end{align*}
    Since
    \begin{align*}
        &\nabla f(x_k)-\nabla f(x_\star) = \frac{1}{n}\sum_{i=1}^n \rbrac{\nabla f_i(x_k)-\nabla f_i(x_\star)},
    \end{align*}
    Jensen's inequality gives
    \begin{align*}
        &\norm{\nabla f(x_k)-\nabla f(x_\star)}^2 \le \frac{1}{n}\sum_{i=1}^n \norm{\nabla f_i(x_k)-\nabla f_i(x_\star)}^2.
    \end{align*}
    Hence
    \begin{align*}
        &\frac{1}{K}\sum_{k=0}^{K-1}\mathbb{E}\sbrac{\norm{\nabla f(x_k)-\nabla f(x_\star)}^2 }\le \frac{4L_{\max}^2\norm{x_0-x_\star}^2}{K} + 10L_{\max}^2D^2.
    \end{align*}
    Therefore,
    \begin{align*}
        &\min_{0\le k\le K-1}\mathbb{E}\sbr{\norm{\nabla f(x_k)-\nabla f(x_\star)}^2} \le \frac{4L_{\max}^2\norm{x_0-x_\star}^2}{K} + 10L_{\max}^2D^2.
    \end{align*}
    Finally, by Jensen's inequality,
    \begin{align*}
        &\rbrac{\mathbb{E}\sbr{\norm{\nabla f(x_k)-\nabla f(x_\star)}}}^2 \le \mathbb{E}\sbrac{\norm{\nabla f(x_k)-\nabla f(x_\star)}^2}.
    \end{align*}
    Hence
    \begin{align*}
        &\min_{0\le k\le K-1}\mathbb{E}\norm{\nabla f(x_k)-\nabla f(x_\star)} \le \sqrt{\frac{4L_{\max}^2\norm{x_0-x_\star}^2}{K} + 10L_{\max}^2D^2}.
    \end{align*}
    This completes the proof.
\end{proof}

\begin{remark}[Sanity check]
    As discussed earlier, in the interpolation regime or in the single-client setting, we have $D=0$. Hence, the additional neighborhood term vanishes, and \Cref{thm:stochastic_component_typeII,thm:stochastic_component_smooth_convex_global} reduce to the corresponding single-node results in \Cref{thm:smooth-strong,thm:L-smooth}, respectively, up to constant factors.
\end{remark}

\newpage

\section{Toy numerical experiment}
\label{sec:num_exp}

In this section we first illustrate the behavior of the \algname{Local LMO} method on a simple 2-D strongly convex quadratic problem with box constraints. 
The experiment visualizes the method's geometry and compares its observed behavior with the theoretical convergence bounds.

\subsection{Problem setup}\label{sec:exp_setup}

We consider the quadratic objective
\[
    f(x)=\frac{1}{2} x^\top \mQ x, \qquad x\in\R^2,
\]
where $\mQ\in \R^{2\times 2}$ is symmetric positive definite with eigenvalues $\mu=1$ and  $L=100.$
Thus, $f$ is $\mu$--strongly convex and $L$--smooth. We choose $\mQ$ as a rotated diagonal matrix,
\[
    \mQ = \mR
    \begin{pmatrix}
    1 & 0\\
    0 & 100
    \end{pmatrix}
    \mR^\top,
    \quad
    \mR=
    \begin{pmatrix}
    \cos(\pi/6) & -\sin(\pi/6)\\
    \sin(\pi/6) & \cos(\pi/6)
    \end{pmatrix}
\]
so that explicitly
\[
    \mQ=
    \begin{pmatrix}
    25.75 & -42.86825749\\
    -42.86825749 & 75.25
    \end{pmatrix}.
\]
By construction, the unconstrained minimizer of $f$ is the origin $x^\mathrm{unc}_\star=(0,0).$
The feasible set is the box $\cX \eqdef \{x\in \R^2 : \|x-c\|_\infty \le 1\}$, $c=(3,3)$. that is, $\cX=[2,4]\times [2,4]$.
The starting point is $x_0=(4,4)\in \cX$.
In this experiment, the constrained minimizer is $x_\star \approx (3.3295734,2)$, which lies on the lower edge of the box.
At iteration $k$, \algname{Local LMO} computes
\[
    x_{k+1}\in \argmin_{z\in \cX \cap \cB(x_k,t_k)} \langle \nabla f(x_k),z\rangle,
\]
where the radius is chosen according to the theoretically prescribed rule
\[
t_k=\theta \|x_k-x_\star\|, \qquad \theta=\frac{2\sqrt{\mu L}}{L+\mu} = \frac{2\sqrt{100}}{101} \approx 0.198020.
\]
\Cref{thm:smooth-strong} predicts
\[
    \|x_{k+1}-x_\star\|^2 \le \left(1-\theta^2\right)\|x_k-x_\star\|^2 \approx 0.960788 \cdot \|x_k-x_\star\|^2.
\]
We run \Cref{alg:new} for $200$ iterations. 
Since the problem is two-dimensional, the subproblem can be solved exactly by enumerating the relevant boundary candidates: corners of the box lying inside the ball, intersections of the circle and the box edges, and the unconstrained minimizer of the linear function over the ball whenever it is feasible.

\subsection{Geometry of the iterates}

\Cref{fig:local-lmo-2d} shows the contour lines of the quadratic objective, the box constraint, the unconstrained minimizer, the constrained minimizer, the first few balls $\cB(x_k,t_k)$, and the trajectory of the iterates.

\begin{figure}[t]
    \centering
    \includegraphics[width=0.85\textwidth]{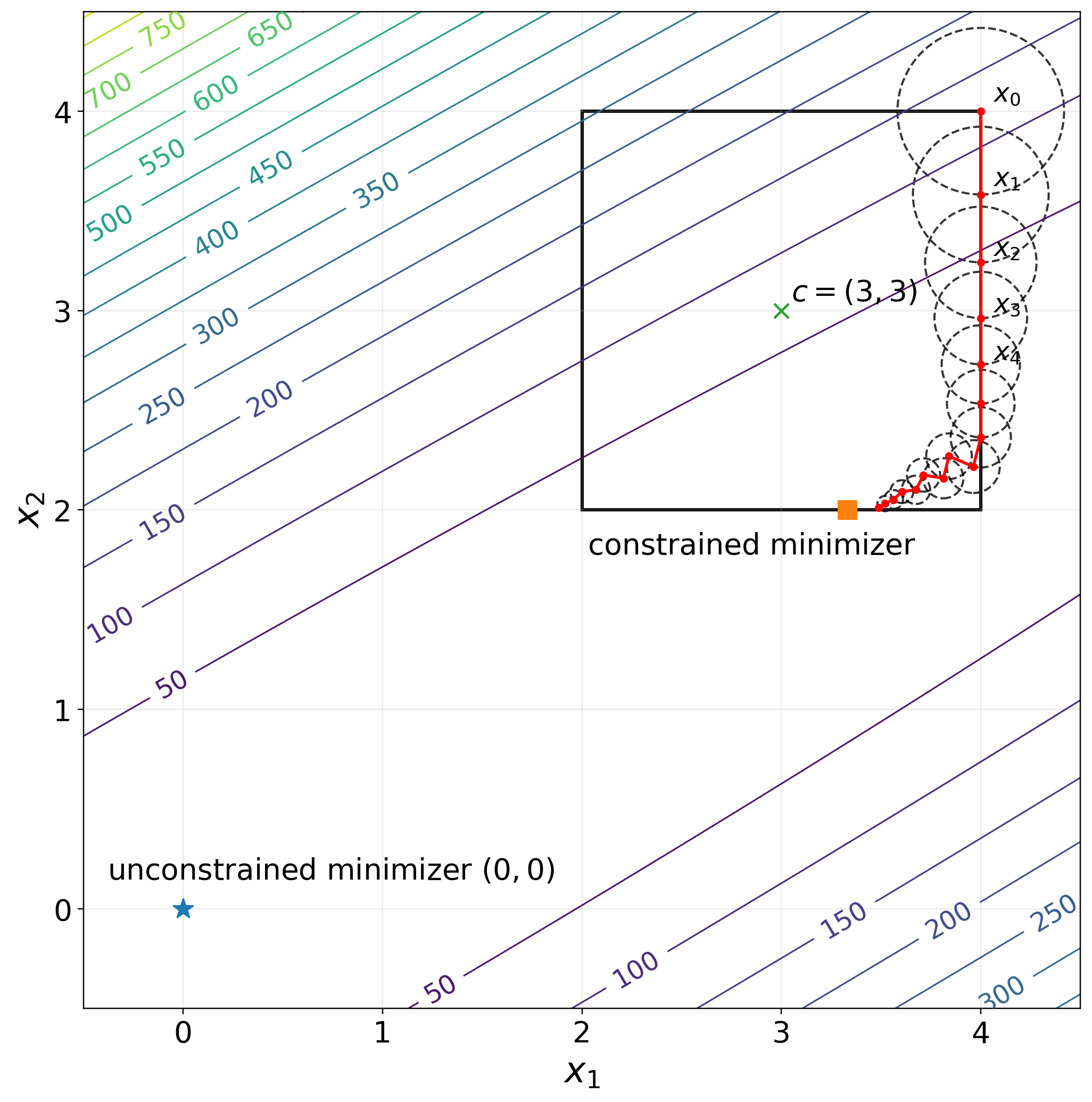}
    \caption{Two-dimensional illustration of \algname{Local LMO} with the radius rule $t_k=\theta\|x_k-x_\star\|$. The plot shows the contour lines of the quadratic objective, the feasible box $\cX=[2,4]\times [2,4]$ with center $c=(3,3)$, the unconstrained minimizer $(0,0)$, the constrained minimizer $x_\star$, the first few trust-region balls, and 15 iterates $\{x_k\}$. In this case, $L=100$, $\mu=1$ and $\theta\approx 0.19802$.}
    \label{fig:local-lmo-2d}
\end{figure}

Several features are worth noting. 
First, the method starts at the upper-right corner of the box and initially moves down along the right boundary. This is natural, because the gradient points roughly toward the lower-right direction, but the iterates must remain inside the feasible set and inside the local ball. 
Second, once the iterates get close to the lower boundary, they quickly move toward the constrained minimizer $x_\star$.
Third, the shrinking radii $t_k$ make the local feasible regions progressively smaller as the iterates approach the solution.

\subsection{Gradient-difference convergence}

We next examine the quantity $\|\nabla f(x_k)-\nabla f(x_\star)\|^2$.
\Cref{thm:L-smooth} gives the general upper bound
\[
    \min_{0\le k\le K-1}\|\nabla f(x_k)-\nabla f(x_\star)\|^2 \le \frac{L^2\|x_0-x_\star\|^2}{K}.
\]
Although this estimate is only a best-iterate bound, it is still informative to compare it with the observed gradient-difference sequence.
\Cref{fig:local-lmo-grad-corrected} show the raw sequence $\|\nabla f(x_k)-\nabla f(x_\star)\|^2$, its running minimum, and the theoretical $\cO(\nicefrac{1}{k})$ upper bound \[\frac{L^2\|x_0-x_\star\|^2}{(k+1)}.\]

\begin{figure}[t]
    \centering
    \includegraphics[width=0.85\textwidth]{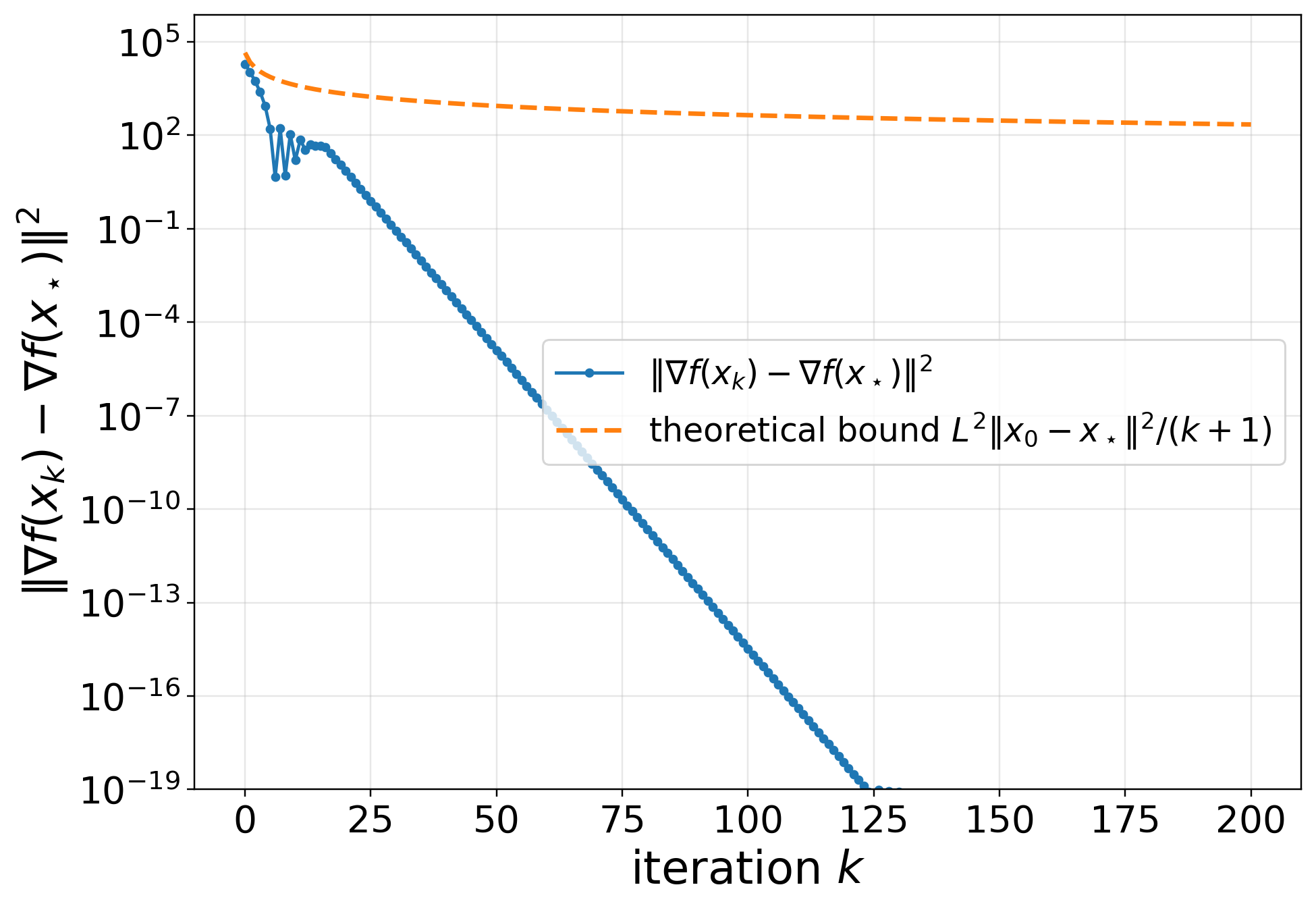}
    \caption{Semi-log plot of the gradient-difference quantity $\|\nabla f(x_k)-\nabla f(x_\star)\|^2$, together with its running minimum and the theoretical upper bound $L^2\|x_0-x_\star\|^2/(k+1)$.}
    \label{fig:local-lmo-grad-corrected}
\end{figure}

The behavior is strikingly better than what the $\cO(\nicefrac{1}{k})$ bound alone would suggest. 
After a short transient phase, the quantity $\|\nabla f(x_k)-\nabla f(x_\star)\|^2$ decreases essentially linearly on the logarithmic scale until it reaches machine precision. 
In particular, the running minimum decays much faster than the generic $\nicefrac{1}{k}$ benchmark. 
This is consistent with the fact that the present problem is strongly convex and that the radius rule was designed using the strong convexity and smoothness constants.

\subsection{Distance convergence and linear theory}

Finally, \Cref{fig:local-lmo-distance-corrected} shows the squared distance to the constrained minimizer, together with the linear upper bound predicted by \Cref{thm:smooth-strong}:
\[
 \psi(k) \eqdef   \left(\frac{L-\mu}{L+\mu}\right)^{2k}\|x_0-x_\star\|^2.
\]

\begin{figure}[t]
    \centering
    \includegraphics[width=0.85\textwidth]{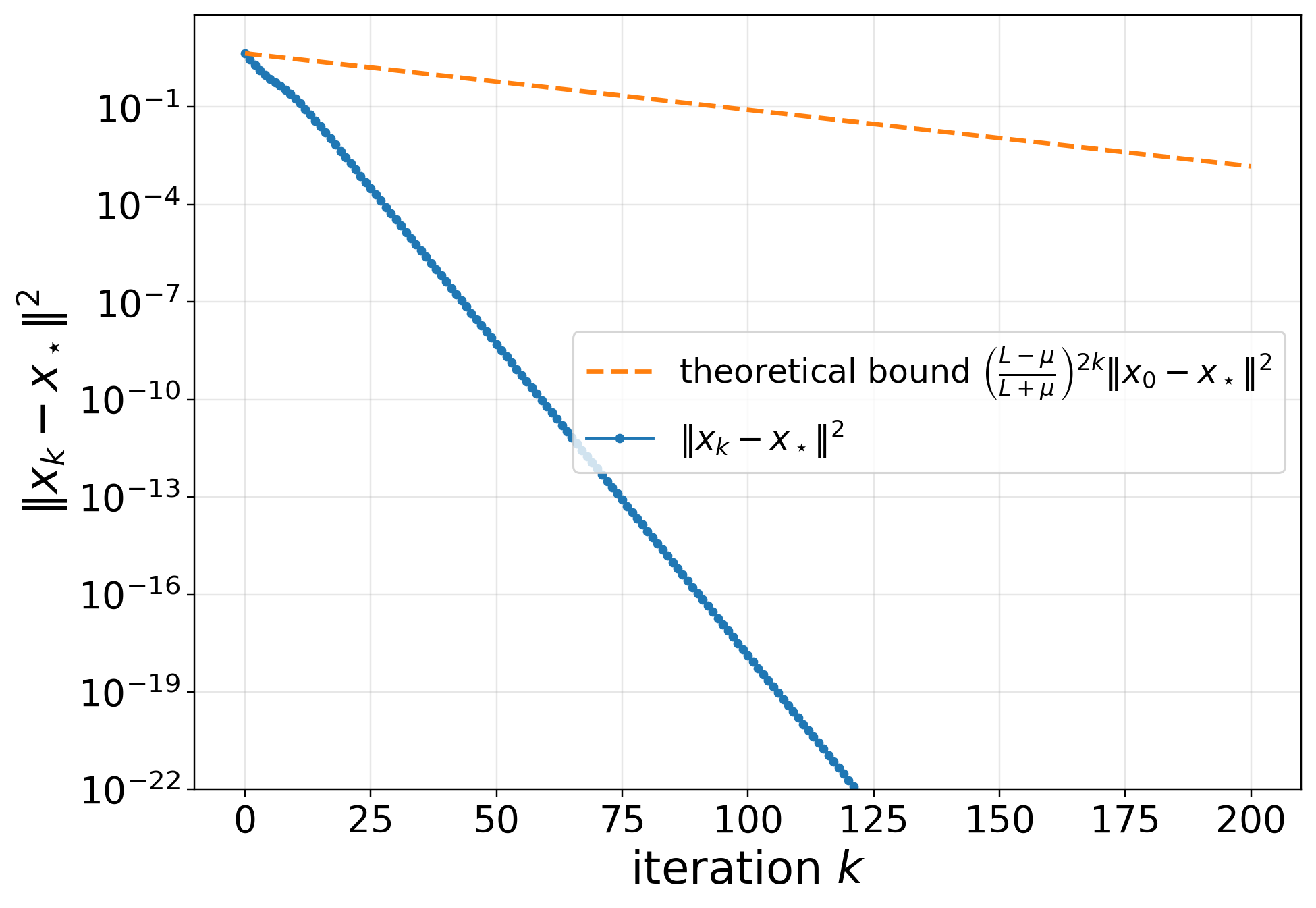}
    \caption{Semi-log plot of the squared distance to the constrained minimizer for \algname{Local LMO}, together with the geometric upper bound predicted by \Cref{thm:smooth-strong}.}
    \label{fig:local-lmo-distance-corrected}
\end{figure}

The empirical behavior supports the theorem. 
The observed sequence decays much faster than the theoretical geometric upper bound and reaches numerical precision after roughly $125$ iterations. 
This experiment also highlights an important implementation detail. The radius rule in \Cref{thm:smooth-strong} is $t_k=\theta\|x_k-x_\star\|$,
where $x_\star$ is the \emph{constrained} minimizer. 
If one mistakenly uses the unconstrained minimizer instead, then the predicted linear behavior may fail completely.

\subsection{Comparison with Projected Gradient Descent and Frank--Wolfe}
\label{subsec:comparison-pgd-fw}

We now compare \algname{Local LMO} with two standard first-order methods for constrained smooth convex optimization: Projected Gradient Descent and Frank--Wolfe. 
We use exactly the same problem instance as in \Cref{sec:exp_setup}. The three methods are implemented as follows.

\paragraph{Local-LMO.}
We use the same method and the same theoretically prescribed radius rule as before:
\[
    x_{k+1}\in \argmin_{z\in \cX\cap \cB(x_k,t_k)} \langle \nabla f(x_k),z\rangle, \qquad t_k=\theta \|x_k-x_\star\|.
\]

\paragraph{Projected Gradient Descent.}
We run Projected Gradient Descent with the standard stepsize $\nicefrac{1}{L}$:
\begin{align}\label{eq:pgd}
    x_{k+1} = \Proj_{\cX}\left(x_k-\frac1L \nabla f(x_k)\right). \tag{\algname{PGD}}
\end{align}

\paragraph{Frank--Wolfe.}
We run the classical Frank--Wolfe method
\begin{align}\label{eq:fw}
    s_k\in \argmin_{s\in \cX}\langle \nabla f(x_k),s\rangle,
    \qquad
    x_{k+1}=(1-\gamma_k)x_k+\gamma_k s_k \tag{\algname{FW}}, \qquad  \gamma_k=\frac{2}{k+2}.
\end{align}
Since $\cX=[2,4]^2$ is a box, the linear minimization oracle is explicit.
We run all three methods for $100$ iterations. 

\begin{figure}[t]
    \centering
    \includegraphics[width=0.85\textwidth]{2026-22-Local-LMO/plot-3-methods.png}
    \caption{Semi-log plot of the squared distance to the constrained minimizer for \algname{Local LMO}, Projected Gradient Descent, and Frank--Wolfe, all run on the same two-dimensional strongly convex quadratic problem for $100$ iterations. For \algname{Local LMO}, we also plot the geometric upper bound $\left(\nicefrac{(L-\mu)}{(L+\mu)}\right)^{2k}\|x_0-x_\star\|^2$.}
    \label{fig:comparison-lmo-pgd-fw-ap}
\end{figure}

\Cref{fig:comparison-lmo-pgd-fw-ap} shows the semi-log plot of the squared distance to the constrained minimizer $\|x_k-x_\star\|^2$.
The figure reveals a clear separation between the three methods.
First, \algname{PGD} is the fastest method on this example. 
Its convergence is essentially linear on the logarithmic scale and reaches extremely high accuracy already within the $100$-iteration horizon. 
This is not surprising: \algname{PGD} has direct access to the Euclidean projection onto the feasible set, and the present problem is a smooth strongly convex quadratic.

Second, \algname{Local LMO} also exhibits clear linear convergence, in full agreement with the theory. 
Moreover, the empirical decay is substantially faster than the conservative geometric upper bound.
Thus, at least on this simple problem, the theorem provides a valid but somewhat pessimistic worst-case estimate. 
The observed behavior is significantly better.

Third, Frank--Wolfe is much slower than the other two methods. Its trajectory displays the familiar zig-zagging behavior associated with projection-free methods on constrained problems. 
At the end of the $100$-iteration horizon, the observed squared distances are summarized in \Cref{tab:toy_distances}.
\begin{table}[t]
    \centering
    \caption{Observed squared distances after $100$ iterations.}
    \label{tab:toy_distances}
    \begin{tabular}{lccc}
        \toprule
        & \algname{Local LMO} & \algname{PGD} & \algname{FW} \\
        \midrule
        $\norm{x_{100}-x_\star}^2$
        & $1.32\times 10^{-18}$
        & $6.71\times 10^{-24}$
        & $1.58\times 10^{-5}$ \\
        \bottomrule
    \end{tabular}
\end{table}

These results show a clear trade-off. \algname{PGD} performs best when Euclidean projection is cheap, while \algname{FW} avoids projections but converges much more slowly. 
\algname{Local LMO} sits between them: it replaces global projection with a local linear minimization over $\cX\cap \cB(x_k,t_k)$, achieving much faster convergence than Frank--Wolfe with a clean linear rate, though still slower than Projected Gradient Descent.
Overall, the experiment suggests that \algname{Local LMO} can offer a meaningful interpolation between projection-based and projection-free methods: it preserves the linear-oracle flavor of \algname{FW} while exploiting locality strongly enough to recover geometric convergence under strong convexity and smoothness.

\subsection{Comparison with geometric radius schedules}
\label{sec:comparison-geometric-radius}

We now compare the theoretically prescribed adaptive radius rule
$
    t_k=\theta\|x_k-x_\star\|,
$
with a family of geometric radius schedules of the form
\[
    t_k=\theta\|x_0-x_\star\|q^k,
    \qquad q\in(0,1).
\]
The motivation for this comparison is that the adaptive rule depends on the unknown current distance to the solution, whereas the geometric rule uses only the initial distance and a fixed decay parameter~$q$. 
Hence, it is natural to ask whether a well-tuned geometric schedule can mimic the behavior of the adaptive rule in practice.

We use exactly the same problem instance as before. The constant appearing in the radius rule is $
\theta \approx 0.198020.
$
For the geometric rule, we test $10$ equally spaced values of $q$ in the interval $[0.8,0.95]$.
All methods are run for $100$ iterations, and we compare the quantity $\|x_k-x_\star\|$.

\begin{figure}[t]
    \centering
    \includegraphics[width=0.85\textwidth]{2026-22-Local-LMO/plot-heuristic-radius.png}
    \caption{Semi-log plot comparing the adaptive radius rule $t_k=\theta\|x_k-x_\star\|$ with geometric radius schedules $t_k=\theta\|x_0-x_\star\|q^k$ for $10$ values of $q\in[0.8,0.95]$.}
    \label{fig:comparison-geometric-radius-ap}
\end{figure}

\Cref{fig:comparison-geometric-radius-ap} reveals a clear pattern. The adaptive rule produces a stable and nearly straight line on the semi-log scale, indicating clean linear convergence. Several geometric schedules also perform well, but their behavior is much more sensitive to the choice of $q$.

If $q$ is too small, the radius shrinks too quickly. In that case the method becomes overly conservative, and progress stalls long before the iterate gets close to the solution. This phenomenon is clearly visible for $q=0.800$ and $q=0.817$, where the method stagnates at a relatively large distance from~$x_\star$.
If $q$ is too large, the radius shrinks too slowly. Then the local subproblems remain too large for too long, which leads to oscillatory behavior and poorer asymptotic performance. This can be seen for $q=0.917$, $q=0.933$, and $q=0.950$, where the convergence is still present but noticeably less efficient.

The best-performing geometric schedules are clustered around $q\approx 0.85 \text{ to } 0.90$.
In particular, the choices $q=0.850$ and $q=0.867$ almost coincide with the adaptive baseline over the entire horizon, and $q=0.900$ also performs very well.
At the end of the $100$-iteration run, the observed distances are summarized in \Cref{tab:q_res}.
\begin{table}[t]
    \centering
    \caption{Observed distances after $100$ iterations for different choices of $q$.}
    \label{tab:q_res}
    \begin{tabular}{lclclc}
        \toprule
        Setting & $\norm{x_{100}-x_\star}$ 
        & Setting & $\norm{x_{100}-x_\star}$ 
        & Setting & $\norm{x_{100}-x_\star}$ \\
        \midrule
        Adaptive baseline & $1.15\times 10^{-9}$
        & $q=0.800$ & $3.47\times 10^{-1}$
        & $q=0.817$ & $2.05\times 10^{-1}$ \\
        $q=0.833$ & $1.93\times 10^{-3}$
        & $q=0.850$ & $4.45\times 10^{-9}$
        & $q=0.867$ & $5.15\times 10^{-9}$ \\
        $q=0.883$ & $1.91\times 10^{-6}$
        & $q=0.900$ & $7.27\times 10^{-8}$
        & $q=0.917$ & $6.86\times 10^{-5}$ \\
        $q=0.933$ & $4.22\times 10^{-4}$
        & $q=0.950$ & $1.39\times 10^{-6}$
        & & \\
        \bottomrule
    \end{tabular}
\end{table}

This experiment gives two conclusions. 
First, a well-tuned geometric schedule can nearly match the adaptive rule, with $q\in[0.85,0.90]$ performing especially well. Second, the adaptive rule is more robust because it requires no tuning and adjusts the radius automatically.
Thus, the geometric schedule can be a practical surrogate when $\norm{x_k-x_\star}$ is unavailable, but the adaptive rule remains the more principled and stable choice.

\clearpage
\section{Extra numerical experiment: three constraint geometries}
\label{sec:exp-3-geom}

Here we perform an experiment with the same two-dimensional function as before, but with three different constraints: $\ell_1$, $\ell_2$ and $\ell_{\infty}$ balls of radius one centered at various points. \Cref{fig:local-lmo-2d-intro-3-in-1} shows 100 iterates of \algname{Local LMO} in each case. The point of this experiment is to showcase the different ways the iterates of our methods interact with the boundary of constraint sets of different geometries. 

\begin{figure}[!h]
    \centering
    \includegraphics[width=0.9\textwidth]{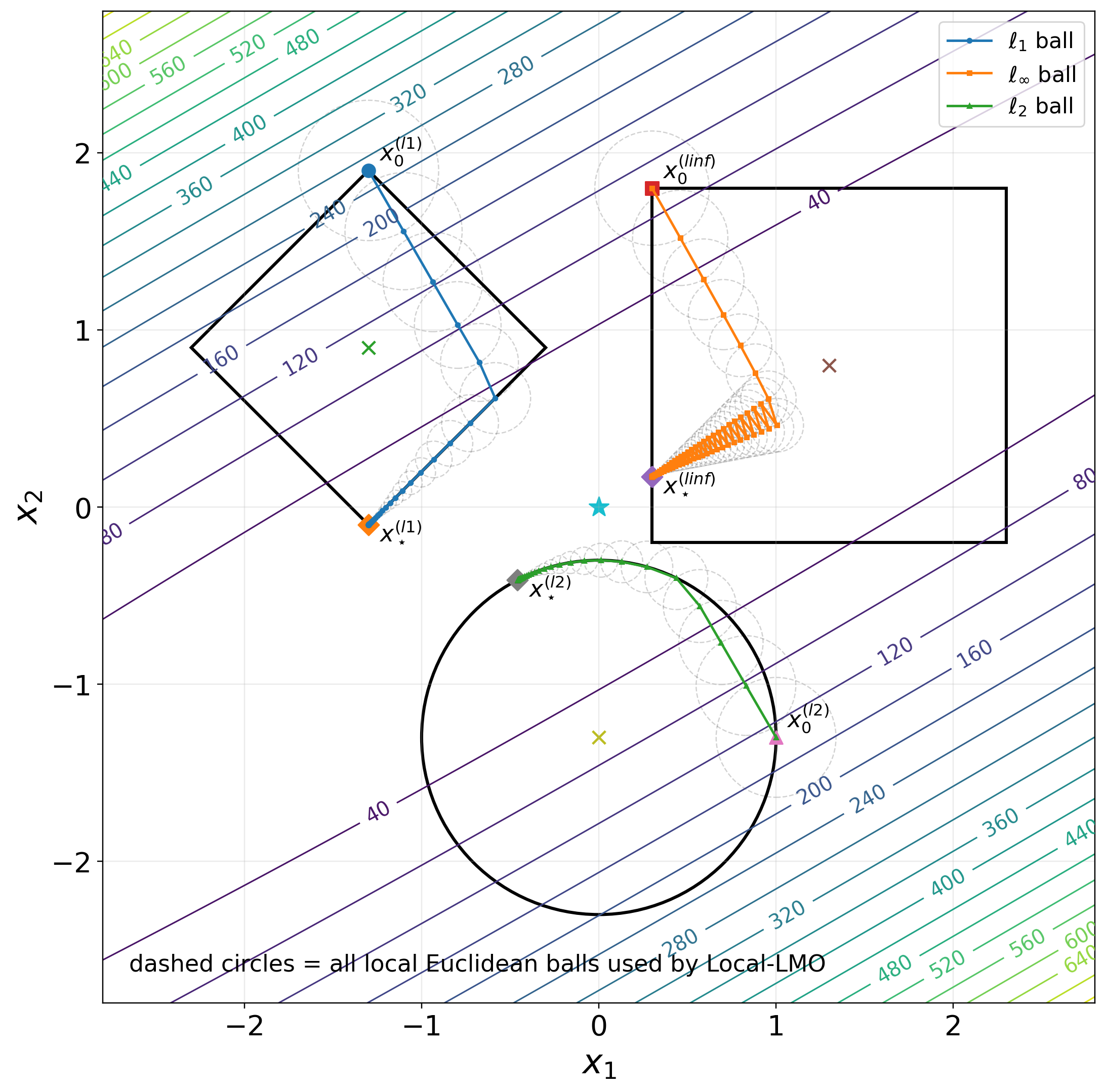}
    \caption{Illustration of \algname{Local LMO} dynamics for an $L$--smooth ($L=100$) and $\mu$--strongly convex ($\mu=1$) quadratic  $f:\R^2\to \R$, and three ball constraints of radius $1$ (with $\ell_1$, $\ell_2$ and $\ell_\infty$ ball geometries), with the \algname{Local LMO} radius rule $t_k=\theta\|x_k-x_\star\|$, where $\theta\approx 0.19802$ (see \Cref{thm:smooth-strong}). Shown: 100 iterates $\{x_k\}$ of \algname{Local LMO}  and the corresponding  balls $\cB(x_k,t_k)$. Note that $\|x_{k+1}-x_k\|=t_k$ for all $k$ (\Cref{thm:descent}(ii) says that this is always the case).}
    \label{fig:local-lmo-2d-intro-3-in-1}
\end{figure}


\end{document}

%% file: macros-common.tex
\usepackage{multirow}
\usepackage[utf8]{inputenc} 
\usepackage[T1]{fontenc} 
\usepackage{calligra}
\usepackage{layout}

\usepackage{amsmath}
\usepackage{amssymb}
\usepackage{amsfonts} 
\usepackage{amsthm}

\usepackage{mathtools}
\usepackage{thmtools} 
\usepackage{thm-restate}

\usepackage{subcaption}

\usepackage{latexsym}

\usepackage{xcolor} 
\usepackage{graphicx}

\usepackage{algorithmic}
\usepackage{algorithm}

\usepackage{url}           
\usepackage{booktabs} 
\usepackage{microtype} 

\usepackage{cleveref}

\usepackage{bbm}
\usepackage{bm} 

\usepackage{nicefrac} 
\usepackage{adjustbox}
\usepackage{threeparttable}

\usepackage{import, ifthen}

\usepackage{tcolorbox}
\usepackage{pifont}
\definecolor{mydarkred}{RGB}{192,25,25}
\definecolor{mydarkgreen}{RGB}{25,192,25}
\definecolor{mydarkblue}{RGB}{25,25,192}
\definecolor{darkscarlet}{rgb}{0.34, 0.01, 0.1}
\definecolor{yaleblue}{rgb}{0.06, 0.3, 0.57}
\definecolor{darkpowderblue}{rgb}{0.0, 0.2, 0.6}
\definecolor{midnightblue}{HTML}{0059b3}
\definecolor{noonblue}{HTML}{e5eef7}
\definecolor{chromered}{HTML}{f14233}
\definecolor{olivedrab}{HTML}{6b8e23}

\definecolor{RedOrange}{cmyk}{0,0.77,0.87,0}
\definecolor{Orchid}{cmyk}{0.32,0.64,0,0}

\newcommand{\red}{\color{RedOrange}}

\newcommand{\green}{\color[rgb]{0.13,0.55,0.13}} 

\newcommand{\cmark}{{\green\ding{51}}}%
\newcommand{\xmark}{{\red\ding{55}}}%

\usepackage{xspace}

\newcommand{\algname}[1]{{\small\sf#1}\xspace}
\newcommand{\algnamesmall}[1]{{\scriptsize\sf#1}\xspace}

\newcommand{\squeeze}{}

\newcommand{\norm}[1]{{\left\| #1 \right\|}}

\newcommand{\rbr}[1]{\left(#1\right)} 
\newcommand{\sbr}[1]{\left[#1\right]} 

\newcommand{\inp}[2]{\left\langle#1,#2\right\rangle} 

\newcommand{\anglebr}[1]{\left\langle#1\right\rangle} 

\newcommand{\parens}[1]{\left( #1 \right)}
\newcommand{\brac}[1]{\left\{ #1 \right\}}

\newcommand{\inner}[2]{\anglebr{#1, #2}}

\newcommand{\cA}{\mathcal{A}}
\newcommand{\cB}{\mathcal{B}}
\newcommand{\cC}{\mathcal{C}}

\newcommand{\cF}{\mathcal{F}}

\newcommand{\cO}{\mathcal{O}}
\newcommand{\cP}{\mathcal{P}}

\newcommand{\cV}{\mathcal{V}}
\newcommand{\cX}{\mathcal{X}}

\newcommand{\R}{\mathbb{R}}  

\newcommand{\mQ}{\bm{Q}}
\newcommand{\mR}{\bm{R}}

\newcommand{\del}[1]{}

\newcommand{\eqdef}{\coloneqq}

\DeclareMathOperator{\diam}{diam}        
\DeclareMathOperator{\dist}{dist}            

\newcommand{\lmo}[2]{{\rm LMO}_{#1}(#2)}

\DeclareMathOperator{\prox}{prox}

\newcommand{\Proj}{{\rm Proj}} 

\DeclareMathOperator{\brox}{brox}

\newcommand{\pr}[1][]{
  \ifthenelse { \equal{#1}{} }
  { \ensuremath{\mathrm{P}} }
  { \ensuremath{\mathrm{P}\left(#1\right)} }
}

\DeclareMathOperator{\dom}{dom}         

\newcommand{\argmin}{\mathop{\arg\!\min}}

\usepackage[colorinlistoftodos,bordercolor=orange,backgroundcolor=orange!20,linecolor=orange,textsize=scriptsize]{todonotes}

\newcommand{\ppeter}[1]{}

\newcommand{\hidesolutions}[1]{} 

%% file: macros-theorems-article.tex
\usepackage{mdframed}

\definecolor{thmbackground}{rgb}{240, 240, 255}

\mdfdefinestyle{theoremstyle}{
    backgroundcolor=lightgray!12, 
    linecolor=lightgray!12, 
    innertopmargin=4pt, 
    innerbottommargin=4pt,
    innerrightmargin=10pt,
    innerleftmargin=10pt
}

\declaretheoremstyle[
    headfont=\bfseries,
    bodyfont=\normalfont,
    mdframed={
        style=theoremstyle
    }
]{graytheoremstyle}

\declaretheorem[
    name=Theorem,
    style=graytheoremstyle,
    numberwithin=section
]{theorem}

\declaretheorem[
    name=Lemma,
    style=graytheoremstyle,
    numberwithin=section
]{lemma}

\declaretheorem[
    name=Assumption,
    style=graytheoremstyle,
    numberwithin=section
]{assumption}

\declaretheorem[
    name=Proposition,
    style=graytheoremstyle,
    numberwithin=section
]{proposition}

\theoremstyle{plain}
\numberwithin{claim}{section}
\numberwithin{fact}{section}
 \numberwithin{exercise}{section}
\newtheorem{example}{Example}\numberwithin{example}{section}

\newtheorem{remark}[theorem]{Remark}

\theoremstyle{definition}
\numberwithin{definition}{section}

